\def\foo#1\endgraf\unskip#2\foo{\def\row@to@buffer{#1\endgraf\unskip\unskip#2}}
\theoremstyle{plain}
\newtheorem{theorem}{Theorem}[section]
\newtheorem{corollary}[theorem]{Corollary}
\newtheorem{lemma}[theorem]{Lemma}
\newtheorem{proposition}[theorem]{Proposition}
\newtheorem*{theorem*}{Theorem}
\newtheorem*{corollary*}{Corollary}
\theoremstyle{definition}
\newtheorem{definition}[theorem]{Definition}
\theoremstyle{remark}
\newtheorem{remark}[theorem]{Remark}
\newtheorem{example}[theorem]{Example}
\newcommand{\codim}{\text{codim}}
\newcommand{\primeset}[1]{#1}
\newcommand{\id}{\textup{id}}
\newcommand{\td}[1]{\tilde{#1}}
\newcommand{\into}{\hookrightarrow}
\newcommand{\PP}{\mathbb{P}}
\newcommand{\bS}{\mathbb{S}}
\newcommand{\X}{\mathbb{X}}
\newcommand{\Z}{\mathbb{Z}}
\newcommand{\Q}{\mathbb{Q}}
\newcommand{\R}{\mathbb{R}}
\newcommand{\N}{\mathbb{N}}
\renewcommand{\H}{\mathbb H}
\newcommand{\mc}[1]{\mathcal{#1}}
\newcommand{\ms}[1]{\mathscr{#1}}
\newcommand{\mf}{\mathfrak}
\newcommand{\p}{\mathbbm{p}}
\newcommand{\xr}{\xrightarrow}
\DeclareRobustCommand{\zvec}[1]{%
  \mathrlap{\vec{\mkern-2mu\phantom{#1}}}#1%
}
\DeclareMathAlphabet{\mathpzc}{OT1}{pzc}{m}{it}
\begin{document}

\title{Topological invariance of \\
torsion-sensitive intersection homology}
\author{Greg Friedman\thanks{This work was partially supported by a grant from the Simons Foundation (\#839707 to Greg Friedman)}\\Texas Christian University, Fort Worth, TX, USA\\g.friedman@tcu.edu }

\date{July 31, 2023}

\maketitle
\tableofcontents

\bigskip

\textbf{2000 Mathematics Subject Classification:} Primary: 55N33, 55N30, 57N80, 55M05

\textbf{Keywords: intersection homology, intersection cohomology, Deligne sheaf, CS set, perversity, stratification}

\begin{abstract}
Torsion-sensitive intersection homology was introduced to unify several versions of Poincar\'e duality for stratified spaces into a single theorem. This unified duality theorem holds with ground coefficients in an arbitrary PID and with no local cohomology conditions on the underlying space. In this paper we consider for torsion-sensitive intersection homology analogues of another important property of classical intersection homology: topological invariance. In other words, we consider to what extent the defining sheaf complexes of the theory are independent (up to quasi-isomorphism) of choice of stratification. In addition to providing torsion sensitive versions of the existing invariance theorems for classical intersection homology, our techniques provide some new results even in the classical setting.
\end{abstract}

\section{Introduction}

In \cite{GBF32} we introduced categories of \emph{torsion-sensitive perverse sheaves} (more briefly \emph{ts-perverse sheaves})  and studied their duality properties. In the classical category of perverse sheaves on a stratified pseudomanifold \cite{BBD}, the intermediate extensions of the coefficient systems are the ``Deligne sheaves'' whose hypercohomology groups are the intersection homology groups of Goresky and MacPherson. The primary motivation in \cite{GBF32} was to create a generalization of these Deligne sheaves for which the various intersection homology duality theorems of Goresky-MacPherson \cite{GM1,GM2}, Goresky-Siegel \cite{GS83}, and Cappell-Shaneson \cite{CS91}
all arise as special cases of a single more general duality theorem that incorporates certain torsion phenomena into the sheaf complexes but does not require the special local cohomological conditions on spaces that are needed for some of the original theorems. Indeed, the \emph{ts-Deligne} sheaves of \cite{GBF32}, which are the intermediate extensions of \emph{ts-coefficient systems}, fulfill that goal, and furthermore they can be characterized by a simple set of axioms generalizing the Deligne sheaf axioms of Goresky and MacPherson.  

After providing a generalization of Poincar\'e duality for singular spaces, the next most important property of intersection homology is its topological invariance: while the intersection homology groups are defined in terms of a stratification of the space, the resulting intersection homology groups are independent of the choice of stratification, at least assuming certain restrictions on the perversity parameters. In this paper we consider the topological invariance of the ts-Deligne sheaves up to quasi-isomorphism, including confirming a conjecture made in \cite{GBF32}. In addition to extending versions of past topological invariance results to the torsion sensitive category, our techniques specialize to improve the previous known results about ordinary intersection homology. 

In particular, our main theorem will be the following:

\begin{theorem*}[Theorem \ref{T: top}]
Suppose that $X$ and $\X$ denote two CS set stratifications of the same underlying space with $\X$ coarsening $X$. Let $\mc E$ be a ts-coefficient system such that $\X$ is adapted to $\mc E$ (and hence so is $X$).  Let $\vec p$ and $\vec \p$ be respective ts-perversities on $X$ and $\X$ that are $\mc E$-compatible, and let $\mc P^*$ and $\PP^*$ be the respective ts-Deligne sheaves with coefficients $\mc E$. 
Then $\mc P^*$ is quasi-isomorphic to $\PP^*$.
\end{theorem*}
Here CS sets are a class of stratified space generalizing pseudomanifolds, and so it includes irreducible algebraic and analytic varieties, and the $\mc E$-compatibility condition imposes both growth rate conditions on the perversities in relation to each other as well as conditions on how they interact with the coefficient system $\mc E$. 
While it will take some time below to explain all of the definitions in detail, we note as a corollary the special case in which our ts-perversity $\vec p$ is simply one of the original perversity parameters $\bar p$ of Goresky and MacPherson. In this case, our result implies, by a different route, the original topological invariance result for intersection homology; see \cite{GBF44} for a detailed treatment of just this special case.

\begin{corollary*}[Goresky-MacPherson \cite{GM2}]
Let $X$ be an $n$-dimensional topological stratified pseudomanifold, e.g.\ a Whitney stratified irreducible complex variety, and let $Y$ be the same space with a different stratification as a stratified pseudomanifold. Let $\bar p$ be a perversity as defined by Goresky and MacPherson, i.e.\ a function $\bar p: \Z_{\geq 2}\to \Z$ such that $\bar p(2)=0$ and $\bar p(k)\leq \bar p(k+1)\leq \bar p(k)+1$. Then letting $I^{\bar p}H_*$ denote the Goresky-MacPherson intersection homology groups, $$I^{\bar p}H_*(X) \cong I^{\bar p}H_*(Y).$$ 
\end{corollary*}

To explain further, we briefly outline some of the history.

\paragraph{History.}
The original intersection homology groups of Goresky and MacPherson \cite{GM2} are defined on stratified pseudomanifolds and depend on perversity parameters $\bar p:\Z_{\geq 2}\to \Z$ satisfying the original Goresky-MacPherson conditions: $\bar p(2)=0$ and $\bar p(k)\leq \bar p(k+1)\leq \bar p(k)+1$. If $X$ is an $n$-dimensional stratified pseudomanifold, so in particular a filtered space $X=X^n\supset X^{n-2}\supset\cdots \supset X^0$ with each $X^m-X^{m-1}$ an $m$-manifold (possibly empty), then the Deligne sheaf  is constructed beginning with a local system $E$ on $X-X^{n-2}$ and then performing a sequence of pushforwards and truncations over strata of increasing codimension. The perversity value $\bar p(k)$ determines the truncation degree following the pushforward to the codimension $k$ strata. In \cite{GM2}, Goresky and MacPherson showed that for a fixed perversity and local system the resulting sheaves are independent (up to quasi-isomorphism) of the precise choice of pseudomanifold stratification; a more detailed exposition was  provided by Borel in \cite[Section V.4]{Bo}.

King \cite{Ki} later gave a proof of the topological invariance of intersection homology without using sheaves and requiring only that $\bar p$ be nonnegative as well as the growth condition. Furthermore, King worked in the broader category of CS sets and allowed strata of codimension one. However, it should be noted that when $\bar p$ has values such that $\bar p(k)>k-2$ the singular chain intersection homology of \cite{Ki} is not quite the same thing as the sheaf-theoretic intersection homology of \cite{GM2}; see \cite{GBF26} for a discussion. In our book on singular chain intersection homology \cite{GBF35}, we call King's singular chain intersection homology  ``GM intersection homology,'' while that arising from the Deligne sheaf hypercohomology is called ``non-GM intersection homology.'' If $\bar p(k)\leq k-2$ for all $k$, as is the case in the original work of Goresky and MacPherson \cite{GM2}, then these theories all agree. A sheaf theoretic approach to GM intersection homology and its topological invariance can be found in Habegger and Saper \cite{HS91}, while a singular chain approach to non-GM intersection homology has been developed in \cite{GBF10, Sa05, GBF35}. Topological invariance of non-GM intersection homology is considered in \cite{GBF11}, where it is shown that topological invariance holds with $\bar p(1)>0$ and $\bar p(k)\leq \bar p(k+1)\leq \bar p(k)+1$ so long as all changes to the stratification occur within a fixed choice of $n-1$ skeleton $X^{n-1}$, but not in general otherwise. 

\begin{table*}[h]
{\small
\begin{tabular}{|l|c|c|l|}
\hline
\textbf{Type} & \textbf{Definition} & \textbf{Conditions} &\textbf{Top. Invariance}\\
\hline
GM perversity & $\Z_{\geq 2}\to \Z$  & \begin{tabular}{@{}c}$\bar p(2)=0$\\ $\bar p(k)\leq \bar p(k+1)\leq \bar p(k)+1$ \end{tabular}&\begin{tabular}{@{}l}PL IH \cite{GM1}\\Sheaf IH \cite{GM2}\end{tabular}\\
\hline
King perversity & $\Z_{\geq 1}\to \Z$  & $\bar p(k)\leq \bar p(k+1)\leq \bar p(k)+1$ & \begin{tabular}{@{}l}Singular IH \cite{Ki}\\ Partial for sheaf IH \cite{GBF11} \end{tabular} \\
\hline
\begin{tabular}{@{}l}General\\perversity\end{tabular}&$\{\text{sing. strata}\}\to \Z$&& With conditions \cite{Va14, CST-inv}\\
\hline
ts-perversity&$\{\text{sing. strata}\}\to \Z\times 2^{P(R)}$&&See Section \ref{S: inv}\\
\hline
\end{tabular}}
\caption{Types of perversities and resulting topological invariance}
\end{table*}

It has since become apparent that it is useful to utilize perversities that depend not just on codimension but on the strata themselves so that we define $\bar p:\{\text{singular strata}\}\to \Z$ (if $X$ is an $n$-dimensional CS set, the $n$-dimensional strata are called \emph{regular} and the lower dimensional strata are \emph{singular}). A version of Deligne sheaves suited to such general perversities is defined in \cite{GBF23}, and the corresponding singular chain non-GM intersection homology is studied in this generality in \cite{GBF35}. Clearly in this generality topological invariance becomes a more subtle issue. Nonetheless, there are such results, typically comparing just two stratifications of the same space, $X$ and $\X$, with $X$ refining $\X$ (or, equivalently, $\X$ coarsening $X$). In \cite{Va14}, Valette works with piecewise linear intersection homology on piecewise linear pseudomanifolds  and arbitrary perversities $\bar p:\{\text{singular strata}\}\to \N$ satisfying $\bar p(S)\leq \codim(S)-2$ for each singular stratum $S$. He shows, in our notation, that if $X$ refines $\X$ and if their respective perversities $\bar p$ and $\bar \p$ satisfy $\bar \p(\bS)\leq \bar p(S)\leq \bar \p(\bS)+\codim(S)-\codim(\bS)$ whenever $S$ is a singular stratum of $X$ contained in the singular stratum $\bS$ of $\X$ then the intersection homology groups agree, i.e.\ $I^{\bar p}H_*(X)\cong I^{\bar \p}H_*(\X)$. Note that with Valette's assumptions the GM and non-GM intersection homologies automatically agree.

More recently  in \cite{CST-inv}, Chataur, Saralegi-Aranguren, and Tanr\'e consider what they call $K^*$-perversities and show that a $K^*$-perversity on a CS set $X$ can be pushed forward to a perversity on the intrinsic coarsest stratification $X^*$ and that the two resulting intersection homology groups are isomorphic. This theorem holds for non-GM intersection homology (which is called ``tame intersection homology'' in \cite{CST-inv}), and there is also a version for GM-intersection homology with fewer conditions on the perversities. They also show that it is similarly possibly to pull a $K^*$-perversity back to any refinement of $X$ and obtain isomorphic intersection homology groups. Our results below include the non-GM (tame) intersection homology versions of these theorems as well as those of Valette as special cases.  

\paragraph{Results.} We now outline our results, mostly in order of presentation below. We work throughout from the sheaf-theoretic point of view, which has the benefit of easily allowing for twisted coefficient systems and also in that quasi-isomorphism of sheaves implies isomorphism of the hypercohomology groups with any system of supports. Thus, in particular, our sheaf quasi-isomorphisms imply isomorphisms of intersection homology groups both with compact supports and with closed supports, the latter corresponding to intersection homology of locally-finite singular chains \cite{GBF10, GBF23}. 

In Section \ref{S: background} we review background material, including definitions of ts-perversities, ts-coefficient systems, and ts-Deligne sheaves, all of which generalize the standard versions. In particular, a ts-perversity is a function $\vec p:\{\text{singular strata}\}\to \Z\times 2^{P(R)}$, where $P(R)$ is the set  of primes (up to unit) of our ground PID $R$ and $2^{P(R)}$ is its power set. We write $\vec p(S)=(\vec p_1(S),\vec p_2(S))$. Using this additional information about primes, torsion data is incorporated into the definition of the ts-Deligne sheaf utilizing the ``torsion-tipped truncation'' functor constructed in \cite{GBF32} in place of the standard truncation. If $\vec p_2(S)=\emptyset$ for all $S$ and the ts-coefficient system is just a local system in degree $0$ then the ts-Deligne sheaf reduces to the classical Deligne sheaf  \cite{GM2,GBF23}. Subsection \ref{S: maximal E} discusses some further natural assumptions about coefficient systems that will be utilized in our broadest topological invariance results. 

In Section \ref{S: inv} we define what we call \emph{$\mc E$-compatibility} between ts-perversities $\vec \p$ and $\vec p$ on a CS set $\X$ and its refinement $X$. Here $\mc E$ is a ts-coefficient system common to $\X$ and $X$. This compatibility depends on $\mc E$ only over the regular strata of $\X$ and is necessary to get the compatibility started. From there compatibility is essentially a Goresky-MacPherson-type growth condition, but involving also the torsion information from $\vec p_2$ and $\vec \p_2$. This compatibility condition generalizes that of Valette \cite{Va14}, which itself stems from the Goresky-MacPherson growth condition, by incorporating the torsion information and also allowing $\vec p_1(S)>\codim(S) -2$. The central result of the paper is Theorem \ref{T: top}, which shows that the ts-Deligne sheaves from $\mc E$-compatible ts-perversities are quasi-isomorphic. 
In Sections \ref{S: pullback} and \ref{S: pushforward} we apply Theorem \ref{T: top} to pullback and pushforward perversities, recovering generalizations of the theorems of Chataur-Saralegi-Tanr\'e \cite{CST-inv}. In particular, we discuss pushforwards to arbitrary coarsenings, not just the intrinsic coarsening.

In Section \ref{S: constrained} we consider quasi-isomorphisms of ts-Deligne sheaves arising from two CS set stratifications of a space without the assumption that one refines the other. This requires restricting ourselves to ts-perversities that depend only on codimension, i.e.\ functions $\vec p:\Z_{\geq 1}\to \Z\times 2^{P(R)}$, such that $\vec p_1$ satisfies the Goresky-MacPherson growth condition and $\vec p_2$ satisfies certain growth conditions on sets of primes. These ts-perversities are called \emph{constrained} or \emph{weakly constrained} depending on our requirements for the value of  $\vec p_1(2)$. For constrained ts-perversities that are also appropriately compatible with the ts-coefficient systems $\mc E$ (by a condition relating torsion information about $\mc E$ with $\vec p_2(2)$), we show in Theorem \ref{T: constrained} that any two stratifications yield quasi-isomorphic ts-Deligne sheaves so long as the closures of their codimension one strata agree. In particular, this theorem holds if one makes the classical assumption that codimension one strata are forbidden. We also show in the same theorem that we can weaken the hypotheses to weakly constrained perversities and no compatibility requirement between $\vec p$ and $\mc E$ so long as the the two stratifications have the same regular strata (or, equivalently, the same codimension one skeleta). These two results generalize the classical Goresky-MacPherson topological invariance in \cite{GM2} and that for ``superperversities'' in \cite{GBF11}. The key idea is to apply  Theorem \ref{T: top} using appropriate common coarsenings of the two stratifications. Such intrinsic stratifications, relative to coefficient systems and fixed subspaces, are constructed in Section \ref{S: intrinsic}, generalizing those in \cite{Ki} and \cite{HS91}. 

Section \ref{S: necessity} concerns the extent to which the conditions for $\mc E$-compatibility between ts-perversities are necessary in order to obtain quasi-isomorphic ts-Deligne sheaves. We show that the conditions on singular strata of $X$ contained in regular strata of the coarsening $\X$ are strictly necessary: if they fail for any stratum the sheaves cannot be quasi-isomorphic. By contrast, the conditions on singular strata of $X$ contained in singular strata of the coarsening $\X$ are only ``necessary in general,'' meaning that we can construct examples in which failure of the conditions implies failure of quasi-isomorphism. However, these conditions may not be necessary in special cases, for example if certain stalk cohomology groups vanish due to the specific topology of some space; see Section \ref{S: sing in sing} for further details. One of our main tools in this section will be a formula for computing the ts-Deligne sheaf hypercohomology for a join $S^k*X$ in terms of the ts-Deligne sheaf hypercohomology of $X$; see Corollary \ref{C: sphere join}. This formula is obtained by first computing the hypercohomology for the suspension $\Sigma X$ in Proposition \ref{P: susp}, which is illuminating in its own right, and then making a nice application of Theorem \ref{T: top} to the iterated suspension.

The original Goresky-MacPherson proof of topological invariance involved support and ``cosupport'' axioms concerning the dimensions on which $H^i(f^*_x\mc P^*)$ and $H^i(f^!_x\mc P^*)$ may fail to vanish, $\mc P^*$ being the Deligne sheaf and $f_x$ the inclusion of the point $x$ into $X$. Our arguments to this point do not involve these axioms and so are fundamentally different from those in \cite{GM2}. In Section \ref{S: dimension} we develop versions of these support and cosupport axioms for ts-Deligne sheaves with strongly or weakly constrained ts-perversities. Strongly constrained ts-perversities require $\vec p_1(2)=0$ while simply ``constrained'' is a bit weaker; the stronger constraint in this section is not strictly necessary but simplifies the discussion.
In the strongly constrained case we provide criteria to recognize ts-Deligne sheaves without reference to any specific stratification, leading to Theorem \ref{T: constrained2}, a statement of topological invariance more analogous to the original Goresky-MacPherson invariance theorem of \cite[Uniqueness Theorem]{GM2} or \cite[Theorem 4.15]{Bo}. The weakly constrained version, Theorem \ref{T: constrainedS}, again requires a fixed choice of the regular strata and is more analogous to the main theorem of \cite{GBF11}. 

Lastly,  Section \ref{S: intrinsic} concerns the details about relative intrinsic stratifications.

\paragraph{Remarks.} When $\vec p_2(S)=\emptyset$ for all $S$ and $\mc E$ is a local system concentrated in degree $0$, our ts-Deligne sheaves reduce to the Deligne sheaves of \cite{GM2,GBF23}. With this assumption, many, though not all, of our results reduce to some previously-known theorems, as outlined above. However, we believe that even in these cases our proofs are quite different, as our main invariance results in Section \ref{S: inv} do not require analogues of the Goresky-MacPherson support and cosupport axioms. For the reader interested only in the classical Deligne sheaves and Goresky-MacPherson perversities, we have extracted a simplified version of this new argument and presented it in \cite{GBF44} together with a very short second proof of the topological invariance of classical intersection homology that does use support and cosupport axioms. 

More generally, some of the assumptions below simplify whenever $\mc E$ is just a globally defined local system of free modules, and we attempt to provide some flags in the exposition to help the reader primarily interested in that case. 

We thank J{\"o}rg Sch{\"u}rmann for pointing out some very helpful references and Scott Nollet for many useful conversations. We also thank David Chataur, Martin Saralegi-Aranguren, and Daniel Tanr\'e for both ongoing stimulating mathematical discussion and their generous hospitality. Finally, we thank the anonymous referee for suggesting several improvements to the exposition. 

\section{Definitions and background}\label{S: background}

\subsection{Spaces} 
Our spaces will be paracompact dimensionally homogeneous CS sets, whose precise definition we recall below. CS sets include topological and piecewise linear pseudomanifolds. In fact, we have the hierarchy
$$\{\text{PL pseudomanifolds}\} \subset \{\text{topological pseudomanifolds}\}\subset \{\text{CS sets}\}.$$
The primary difference between CS sets and topological stratified pseudomanifolds, given our additional dimensional homogeneity condition, is that the links of points of pseudomanifolds must themselves be stratified pseudomanifolds, while the links of points in CS sets need only be compact filtered spaces. PL pseudomanifolds are defined just as topological pseudomanifolds are with the added condition that all spaces and maps describing local conditions must be piecewise linear. Classical PL pseudomanifolds, which are those simplicial complexes consisting exclusively of $n$-simplices such that each $n-1$ face of each $n$-simplex is glued to exactly one $n-1$ face of another $n$-simplex, are a special case. 
All irreducible complex algebraic and analytic varieties can be given stratifications that realize them as PL pseudomanifolds; this is also true of real varieties that possess a dense manifold subset.
Connected orbit spaces of manifolds under smooth actions of compact Lie groups are also PL pseudomanifolds. For more details and an overview of all these spaces, see \cite[Chapter 2]{GBF35}.

We choose to work with CS sets both for their added generality but also because one of our key tools will be intrinsic stratification and the intrinsic stratification of a CS set is also a CS set. In \cite{GM2}, Goresky and MacPherson construct ``canonical $\bar p$-filtrations'' for topological pseudomanifolds, but, in addition to depending on the choice of perversity, the resulting filtrations do not necessarily give the space the structure of a stratified pseudomanifold. 
So, while Goresky-MacPherson \cite{GM2} and Borel \cite{Bo} treat topological pseudomanifolds as the primary objects, we follow King \cite{Ki} by working in the even more general but self-contained class of CS sets. 
The class of PL pseudomanifolds is also preserved on passing to intrinsic stratifications (see \cite[Corollary 2.10.19.]{GBF35}), but we note that topological pseudomanifolds include such important non-PL examples as suspensions of topological manifolds and topological manifolds stratified by the inclusion of locally-flat, but not PL, embedded submanifolds. This includes, for example, locally flat topological knots in high dimensions. Moreover, as our fundamental language is sheaf theory, the added generality of CS sets does not much increase the difficulty of our results, and in some cases this choice provides simplifications. For example, we don't at any point need to take care about PL structures.

We now recall the definition of CS sets due to Siebenmann \cite{Si72}. An $n$-dimensional CS set $X$ is a Hausdorff space equipped with a filtration $$X=X^n\supset X^{n-1}\supset\cdots X^0\supset X^{-1}=\emptyset$$ such that $X_k:=X^k-X^{k-1}$ is a $k$-manifold (possibly empty) and for $x\in X_k$ there is an open neighborhood $U$ of $x$ in $X_k$, an open neighborhood
$N$ of $x$ in $X$, a compact filtered space $L$ (which may be empty), and, letting $cL$ denote the open cone on $L$, a homeomorphism
$h:U\times cL \to N$ such that $h(U \times c(L^j )) = X^{k+j+1}\cap N$ for all $j$. The space $L$ is called a \emph{link} of $x$, and $N$ is called a \emph{distinguished neighborhood} of $x$. Note that if $L=\emptyset$ then $cL=(cL)^0$ is a point.  Dimensional homogeneity means that we assume $X-X^{n-1}$ is dense. Such spaces are locally compact \cite[Lemma 2.3.15]{GBF35}, metrizable \cite[Proposition 1.11]{CST-inv}, and of finite cohomological dimension (\cite[Lemma 6.3.46]{GBF35} and \cite[Theorem II.16.8]{Br}). 
See \cite[Section 2.3]{GBF35} for more details about CS sets in general. All CS sets in this paper will be assumed paracompact and dimensionally homogeneous without further mention. We also assume $X$ is $n$-dimensional unless specified otherwise.

Following Borel \cite[Section V.2]{Bo}, we let $U_k=X-X^{n-k}$, and noting that $U_{k+1}$ is the disjoint union of $U_k$ and $X_{n-k}$, we also take $i_k:U_k\into U_{k+1}$ and  
$j_k:X_{n-k}\into U_{k+1}$. For any $x\in X$, we write $f_x:\{x\}\into X$.
The connected components of $X_k$ are the $k$-dimensional \emph{strata}. 
Strata in  $X_n=X^n-X^{n-1}$ are \emph{regular strata} and strata in $X_k$ for $k\leq n-1$ are  \emph{singular strata}. 
Note that strata may have codimension one, which is sometimes forbidden in other contexts. 

We often abuse notation and use $X$ to refer both to the underlying space and to the space equipped with the stratification; when we wish to emphasize the underlying space or do not yet want to specify the stratification we also write $|X|$.
If $X$ and $\X$ are two stratifications of the same space $|X|$, we say that $\X$ \emph{coarsens} $X$, or that $X$ is a \emph{refinement} of $\X$, if each stratum of $\X$ is a union of strata of $X$. Our standard notation will be $X$ for a CS set and $\X$ for a coarsening of $X$. We will use $\mf X$ for the intrinsic stratifications constructed in Section \ref{S: intrinsic}. If we wish to speak of the same space with two \emph{a priori} unrelated stratifications, we write the stratifications $\mc X$ and $\mc Y$; if we construct a common coarsening of $\mc X$ and $\mc Y$, we will sometimes call that $\mc Z$. 

\subsection{Algebra}
Algebraically, we fix a PID $R$ as our ground ring throughout, and we let $P(R)$ be the set of primes of $R$ up to unit. This means that the elements of $P(R)$ are technically equivalence classes such that $p\sim q$ if $p=uq$ for some unit $u$, though we will abuse notation by letting a prime stand for its equivalence class; cf.\ \cite[Section 2]{GBF32}.
The following is Definition 2.1 of \cite{GBF32}. 

\begin{definition}
If $A$ is a finitely-generated $R$-module  and $\wp \subset P(R)$, we define the \emph{$\wp$-torsion submodule of $A$} to be 
$$T^{\wp}A =\left\{x\in A\mid nx=0\text{ for some  product } n=\prod^s_{i=1} p_i^{m_i}\text{ such that $p_i\in\wp$ and $m_i,s\in \Z_{\geq 0}$}\right\},$$
i.e.\ $T^{\wp}A$ is the submodule annihilated by products of powers of primes in $\wp$. If $T^\wp A=A$, we say that $A$ is \emph{$\wp$-torsion}. If $T^{\wp}A=0$, we say that $A$ is \emph{$\wp$-torsion free.} We take the empty product to be $1$, so in particular if $\wp=\emptyset$ then $T^{\wp}A=0$ and every $A$ is $\emptyset$-torsion free. If $\mf p\in P(R)$ is a single element, we abuse notation and write $T^{\mf p}A$ instead of $T^{\{\mf p\}}A$.
\end{definition}

\subsection{ts-Deligne sheaves} We now recall some material from \cite{GBF32}, leading to the definition of ts-Deligne sheaves. All sheaves are sheaves of $R$-modules, and we think of ourselves as working in the derived category so that $\cong$ denotes quasi-isomorphism. If $\mc S^*$ is a sheaf complex, then $\mc H^i(\mc S^*)$ denotes the derived cohomology sheaf and $\H^i(X;\mc S^*)$ denotes hypercohomology.

We begin with ts-perversities  \cite[Definitions 4.1 and 4.18]{GBF32}:

\begin{definition}\label{D: ts-perv}
For a PID $R$, let $\primeset{P}(R)$ be the set of primes of $R$ (up to unit), and let $2^{\primeset{P}(R)}$ be its power set.
A \emph{torsion-sensitive perversity} (or simply \emph{ts-perversity}) on a CS set $X$ is a function $\vec p: \{\text{singular strata of $X$}\}\to \Z\times 2^{\primeset{P}(R)}$. We denote the components of $\vec p(S)$ by $\vec p(S)=(\vec p_1(S),\vec p_2(S))$. 

The \emph{complementary ts-perversity}, or \emph{dual ts-perversity}, $D\vec p$ is defined by $D\vec p(S)=(\codim(S)-2-\vec p_1(S),P(R)-\vec p_2(S))$, i.e.\ the first component is the complementary perversity to $\bar p$ in the Goresky-MacPherson sense and the second component is the set of primes in $R$ complementary to $\vec p_2(S)$.
\end{definition}

We also recall the notion of a $\wp$-coefficient system, slightly generalizing \cite[Definition 4.2]{GBF32}. On a pseudomanifold, these are the objects in the heart of the natural t-structures ${}^\wp D^\heartsuit$ constructed in \cite[Definition 5.1]{GBF32}. We give an explicit description here. 

\begin{definition}\label{D: ts-coeff}
Let $\wp \subset \primeset{P}(R)$ be a set of primes of the PID $R$. We will call a complex of sheaves\footnote{Even though $\mc E$ is a complex of sheaves, we do not write $\mc E^*$ in order to emphasize the role of $\mc E$ as coefficients.}  $\mc E$ on a space $M$ a \emph{$\wp$-coefficient system} if 
\begin{enumerate}
\item  $\mc H^1(\mc E)$ is a locally constant sheaf of finitely generated $\wp$-torsion modules,
\item $\mc H^0(\mc E)$ is a locally constant sheaf of finitely generated $\wp$-torsion free modules, and
\item $\mc H^i(\mc E)=0$ for $i\neq 0,1$.
\end{enumerate}
More generally, if $M$ is a disjoint union of spaces, we call $\mc E$ a \emph{ts-coefficient system} if it restricts on each component of $M$ to a $\wp$-coefficient system for some $\wp$ (which may vary by component).

Suppose $X$ is a CS set and $\mc E$ is a ts-coefficient system defined over a subset $U\subset X$. We call $U$  the \emph{domain} of $\mc E$, denoted by $\text{Dom}(\mc E)$.
We say that the stratification of $X$ is \emph{adapted to $\mc E$} if $X-X^{n-1}\subset \text{Dom}(\mc E)$, i.e.\ if $\mc E$ is defined on (at least) the regular strata of $X$; cf.\ \cite[Section V.4.12]{Bo}. Of course this is automatically satisfied if $\mc E$ is defined on all of $X$, for example if $\mc E$ is a local system on $X$. 
\end{definition}

\begin{remark}\label{R: maladapted}
As noted in \cite[Remark V.4.14.c]{Bo}, even if we restrict our coefficients to local systems concentrated in a single degree and defined on dense open submanifolds of pseudomanifolds, there can exist $\mc E$ for which there does not exist a CS set stratification adapted to $\mc E$. For example, define $\mc E$ to be the local system on $\R^2-(0,0)\cup\{(1/n,0)|n\in \Z_{\geq 1}\}$ with stalk $\Z$ and nontrivial monodromy on a small loop around each $(1/n,0)$.
\end{remark}

Given a ts-perversity $\vec p$ and a ts-coefficient system $\mc E$ to which $X$ is adapted, the associated ts-Deligne sheaf is defined for pseudomanifolds in \cite[Definition 4.4]{GBF32} and shown there for pseudomanifolds to be an intermediate extension of $\mc E$ with respect to a certain $t$-structure \cite[Proposition 5.12]{GBF32}. The construction holds as well for CS sets, and the ts-Deligne sheaf $\mc P^*_{X,\vec p,\mc E}$, often written simply as $\mc P^*$, is defined as  

$$\mc P^*_{X,\vec p,\mc E}=  \mf t_{\leq \vec p}^{X_0}Ri_{n*}\ldots\mf t_{\leq \vec p}^{X_{n-1}}Ri_{1*}\mc E.$$
Here each $\mf t_{\leq \vec p}^{X_{k}}$ is a \emph{locally torsion-tipped truncation} functor as defined in \cite[Section 3]{GBF32}. We refer the reader there for more details but note that for $\mc S^*$ defined on $U_{k+1}$ we have

\begin{enumerate}
\item $\left(\mf t_{\leq \vec p}^{X_{n-k}}\mc S^*\right)_x=\mc S^*_x$ if $x\in U_k$,

\item if $x\in S\subset  S_{n-k}$ for a singular stratum $S$ then 

\begin{equation*}
H^i\left(\left(\mf t_{\leq \vec p}^{X_{n-k}}\mc S^*\right)_x\right)\cong \begin{cases}
0,& i>\vec p_1(S)+1,\\
T^{\vec p_2(S)}H^i(\mc S^*_x),& i= \vec p_1(S)+1,\\
H^i(\mc S^*_x),& i\leq \vec p_1(S).
\end{cases}
\end{equation*}
\end{enumerate}

If $\mc E$ is a local system (i.e.\ a locally constant sheaf of finitely generated $R$-modules) concentrated in degree $0$, if $\vec p_1$ satisfies the Goresky-MacPherson conditions, and if $\vec p_2(S)=\emptyset$ for all $S$, then this is just the classical Deligne sheaf of Goresky and MacPherson from \cite{GM2}. 

As for the traditional Deligne sheaves, the critical feature of the ts-Deligne sheaves is the ``cone formula.'' As we will prove below in Lemma \ref{L: ngbd}, if $x$ is a point of a CS set with a neighborhood of the form $\R^k\times cL$ and if $S$ is the stratum containing the cone points, then 
\begin{equation*}
H^i(\mc P^*_x)\cong 
\begin{cases}
0,&i>\vec p_1(S)+1,\\
T^{\vec p_2(S)}\H^i(L;\mc P^*|_L),&i=\vec p_1(S)+1,\\
\H^i(L;\mc P^*|_L),&i\leq \vec p_1(S).
\end{cases}
\end{equation*}
We use this below to compute the hypercohomology of a suspension in Proposition \ref{P: susp}.

Analogously to the Goresky-MacPherson Deligne sheaves, the ts-Deligne sheaves can be characterized by axioms. 
Here is the first set of axioms from \cite[Definition 4.7]{GBF32}, generalized for CS sets. We write $\ms S^*_k$ for $\ms S^*|_{U_k}$.

\begin{definition}
Let $X$ be an $n$-dimensional CS set, and let $\mc E$ be a ts-coefficient system on $U_1$. 
 We say that the sheaf complex $\ms S^*$ on $X$ satisfies the \emph{Axioms TAx1$(X,\vec p, \mc E)$} if 

\begin{enumerate}[label=\alph*., ref=\alph*]
\item\label{A: bounded} $\ms S^*$ is quasi-isomorphic to a complex that is bounded and that is $0$ in negative degrees;
\item \label{A: coeffs} $\ms S^*|_{U_1}\cong\mc E|_{U_1}$;
\item \label{A: truncate} if $x\in S\subset X_{n-k}$, where $S$ is a singular stratum, then
 $H^i(\ms S_x)=0$ for $i>\vec p_1(S)+1$ and $H^{\vec p_1(S)+1}(\ms S_x)$ is $\vec p_2(S)$-torsion;
\item \label{A: attach} if $x\in S\subset X_{n-k}$, where $S$ is a singular stratum, then the attachment map $\alpha_k:\ms S_{k+1}\to Ri_{k*}\ms S_k$ induces stalkwise cohomology isomorphisms at $x$  in degrees $\leq \vec p_1(S)$ and it induces stalkwise cohomology isomorphisms $H^{\vec p_1(S)+1}(\ms S_{k+1,x})\to T^{\vec p_2(S)}H^{\vec p_1(S)+1}( (Ri_{k*}\ms S_k)_x)$. 
\end{enumerate} 
\end{definition}

Theorem 4.8 of \cite{GBF32}, which also works for CS sets, shows that  the ts-Deligne sheaf complex  $\mc P^*_{X,\vec p,\mc E}$  satisfies the axioms  TAx1$(X,\vec p, \mc E)$, and conversely any sheaf complex satisfying  TAx1$(X,\vec p, \mc E)$ is quasi-isomorphic to $\mc P^*_{X,\vec p,\mc E}$. It is also observed in \cite[Theorem 4.10]{GBF32} that these sheaf complexes are $X$-clc, meaning that each sheaf $\mc H^i(\mc P^*)$ is locally constant on each stratum. This also continues to hold for CS sets, which have the property that if $j$ is any inclusion of a locally closed subset that is a union of strata then $j^*$, $j^!$, $j_!$, and $Rj_*$ all preserve  \emph{constructibility} by \cite[Proposition 4.0.2.3]{Sch03} (see also \cite[Proposition 4.2.1.2.b]{Sch03}).

As in \cite{GM2, Bo, GBF32}, we can reformulate some of these axioms.

\begin{definition}\label{T: Ax1'}
We say $\ms S^*$ satisfies the \emph{Axioms TAx1'$(X,\vec p, \mc E)$} if 

\begin{enumerate}[label=\alph*., ref=\alph*]
\item\label{A': bounded} $\ms S^*$ is $X$-clc and it is 
quasi-isomorphic to a complex that is bounded and that is $0$ in negative degrees;
\item \label{A': coeffs} $\ms S^*|_{U_1}\cong\mc E|_{U_1}$;
\item \label{A': truncate} if $x\in S\subset X_{n-k}$, where $S$ is a singular stratum, then
$H^i(\ms S^*_x)=0$ for $i>\vec p_1(S)+1$ and $H^{\vec p_1(S)+1}(\ms S^*_x)$ is $\vec p_2(S)$-torsion;

\item \label{A': attach}  if $x\in S\subset X_{n-k}$, where $S$ is a singular stratum, and $f_x:x\into X$ is the inclusion, then

\begin{enumerate}
\item  $H^i(f_x^!\ms S)=0$ for $i\leq \vec p_1(S)+n-k+1$
\item $H^{\vec p_1(S)+n-k+2}(f_x^!\ms S)$ is $\vec p_2(S)$-torsion free.

\end{enumerate}
\end{enumerate}
\end{definition}

The following theorem is a slight generalization of \cite[Theorem 4.13]{GBF32}:

\begin{theorem}\label{T: ax equiv}
On a CS set, the axioms TAx1'$(X,\vec p, \mc E)$ are equivalent to the axioms TAx1$(X,\vec p, \mc E)$ and so any sheaf complex satisfying TAx1'$(X,\vec p, \mc E)$ is quasi-isomorphic to $\mc P^*_{X,\vec p,\mc E}$.
\end{theorem}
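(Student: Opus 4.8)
\textbf{Proof plan for Theorem \ref{T: ax equiv}.}

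The plan is to prove the equivalence of the two axiom systems TAx1 and TAx1' by showing that, in the presence of axioms \eqref{A: bounded}/\eqref{A': bounded}, \eqref{A: coeffs}/\eqref{A': coeffs}, and \eqref{A: truncate}/\eqref{A': truncate} (which are essentially identical in the two lists, the only difference being that TAx1' explicitly builds in $X$-clc), the attachment axioms \eqref{A: attach} and \eqref{A': attach} are equivalent. This is modeled on the classical arguments in \cite[Section V.4]{Bo} and the original \cite[Theorem 4.13]{GBF32}; the only new point is to carry the bookkeeping through for CS sets rather than pseudomanifolds, and to track the $\vec p_2$-torsion conditions alongside the $\vec p_1$-degree conditions. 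First I would fix a singular stratum $S\subset X_{n-k}$, a point $x\in S$, and write $j:X_{n-k}\hookrightarrow U_{k+1}$ for the inclusion of the stratum and $i_k:U_k\hookrightarrow U_{k+1}$ for the open complement, so that there is an attachment distinguished triangle relating $j^!\ms S$, $\ms S_{k+1}$, and $Ri_{k*}\ms S_k$ — concretely, $j^!\ms S \to j^*\ms S_{k+1} \to j^* Ri_{k*}\ms S_k \xrightarrow{[1]}$. Taking stalks at $x$ (legitimate since everything is $X$-clc, using \cite[Proposition 4.0.2.3]{Sch03} to know $Ri_{k*}\ms S_k$ is constructible) gives a long exact sequence relating $H^i(f_x^!\ms S)$, $H^i(\ms S_{k+1,x})$, and $H^i((Ri_{k*}\ms S_k)_x)$, up to the local cohomological dimension shift built into $f_x^!$ on the $(n-k)$-manifold stratum $X_{n-k}$, which is what produces the index shift by $n-k$ between \eqref{A: attach} and \eqref{A': attach}.

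The heart of the argument is then a degree-by-degree comparison. Condition TAx1\eqref{A: attach} says precisely that the map $H^i(\ms S_{k+1,x}) \to H^i((Ri_{k*}\ms S_k)_x)$ is an isomorphism for $i\le \vec p_1(S)$ and is an isomorphism onto the $\vec p_2(S)$-torsion submodule in degree $\vec p_1(S)+1$. Feeding this into the long exact sequence of the triangle above, an isomorphism in degrees $\le \vec p_1(S)$ forces $H^i(f_x^!\ms S)=0$ for $i$ in the corresponding range (after the $n-k$ shift, this is $i\le \vec p_1(S)+n-k$), and the partial-isomorphism-onto-torsion statement in degree $\vec p_1(S)+1$ is what exactly controls the next two cohomology groups of $f_x^!\ms S$: it kills $H^{\vec p_1(S)+n-k+1}(f_x^!\ms S)$ because the map is injective in that degree, and it forces $H^{\vec p_1(S)+n-k+2}(f_x^!\ms S)$ to be $\vec p_2(S)$-torsion free because the image of the connecting map is exactly the quotient $H^{\vec p_1(S)+1}((Ri_{k*}\ms S_k)_x)/T^{\vec p_2(S)}H^{\vec p_1(S)+1}((Ri_{k*}\ms S_k)_x)$, which is torsion free by the structure theorem. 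This gives TAx1\eqref{A: attach} $\Rightarrow$ TAx1'\eqref{A': attach}. For the converse, I would run the same long exact sequence in reverse: vanishing of $H^i(f_x^!\ms S)$ in the stated range immediately yields the isomorphisms in low degrees, and the torsion-freeness of $H^{\vec p_1(S)+n-k+2}(f_x^!\ms S)$ together with the torsion hypothesis on $H^{\vec p_1(S)+1}((Ri_{k*}\ms S_k)_x)$ (which comes from axiom \eqref{A: truncate} applied on the smaller open set, noting $\ms S_k$ itself satisfies the truncation axiom there, or directly from \eqref{A': truncate}) pins down the map in the critical degree as an isomorphism onto the $\vec p_2(S)$-torsion. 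Here one uses that a finitely generated module over a PID splits as torsion $\oplus$ torsion-free, so that injecting with torsion-free cokernel into a module whose $\vec p_2(S)$-torsion part is all there is forces the image to be exactly $T^{\vec p_2(S)}$ of the target.

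The main obstacle I anticipate is not conceptual but bookkeeping: making sure the torsion-tipped truncation's behavior in the single critical degree $\vec p_1(S)+1$ interacts correctly with the exactness, in particular checking that ``$\vec p_2(S)$-torsion'' and ``$\vec p_2(S)$-torsion free'' are genuinely complementary conditions that match up across the connecting homomorphism, and that this works uniformly over the (possibly empty, possibly codimension-one) singular strata without any Goresky-MacPherson growth hypothesis on $\vec p_1$. One should also be slightly careful that $\ms S_k = \ms S|_{U_k}$ satisfies TAx1$(U_k, \vec p|_{U_k}, \mc E)$ so that the truncation axiom \eqref{A: truncate} is available for $\ms S_k$ at points of strata of $U_k$ — but since the axioms are manifestly local and restrict to open unions of strata, this is immediate. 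Finally, to conclude the last sentence of the theorem, I invoke the already-cited fact (\cite[Theorem 4.8 of \cite{GBF32}]{} — i.e.\ the CS-set version stated in the excerpt) that TAx1$(X,\vec p,\mc E)$ characterizes $\mc P^*_{X,\vec p,\mc E}$ up to quasi-isomorphism; combined with the equivalence just established, any complex satisfying TAx1'$(X,\vec p,\mc E)$ satisfies TAx1$(X,\vec p,\mc E)$ and hence is quasi-isomorphic to $\mc P^*_{X,\vec p,\mc E}$.
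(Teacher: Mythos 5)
Your outline is correct and is essentially the paper's own route: the paper proves this theorem simply by citing the proof of \cite[Theorem 4.13]{GBF32}, the only change being that the constructibility statements drawn from Borel are replaced by \cite[Proposition 4.0.2.3]{Sch03} so that $Ri_{k*}\ms S_k$ and $j^!\ms S^*$ remain constructible over a CS set; your attachment triangle $j^!\ms S^*\to j^*\ms S^*_{k+1}\to j^*Ri_{k*}\ms S^*_k\xrightarrow{[1]}$, the stalkwise long exact sequence, and the shift by $n-k$ coming from $f_x^!\cong f_x^*[-(n-k)]$ on the manifold stratum $X_{n-k}$ are exactly that argument. One step is misstated, though the fix is immediate: in the direction TAx1'\ $\Rightarrow$\ TAx1, the torsion input in the critical degree is that $H^{\vec p_1(S)+1}(\ms S^*_x)$ is $\vec p_2(S)$-torsion, i.e.\ axiom (c) (equivalently (c')) applied at $x\in S\subset U_{k+1}$; it is \emph{not} a hypothesis on $H^{\vec p_1(S)+1}\bigl((Ri_{k*}\ms S_k)_x\bigr)$, which is not $\vec p_2(S)$-torsion in general and about which no axiom says anything, nor does axiom (c) ``on the smaller open set'' $U_k$ help, since $x\notin U_k$. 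With the source $\vec p_2(S)$-torsion and the cokernel of the attachment map $\vec p_2(S)$-torsion free (it injects into $H^{\vec p_1(S)+n-k+2}(f_x^!\ms S^*)$ after the shift), the image is exactly $T^{\vec p_2(S)}$ of the target, as you intend. Similarly, in the forward direction you should invoke $H^{\vec p_1(S)+2}(\ms S^*_x)=0$ (axiom (c)) to see that all of $H^{\vec p_1(S)+n-k+2}(f_x^!\ms S^*)$, not merely the image of the connecting map, is identified with the quotient $H^{\vec p_1(S)+1}\bigl((Ri_{k*}\ms S_k)_x\bigr)/T^{\vec p_2(S)}H^{\vec p_1(S)+1}\bigl((Ri_{k*}\ms S_k)_x\bigr)$.
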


The proof is the same as that of  \cite[Theorem 4.13]{GBF32}, replacing the theorems about constructibility invoked from Borel (e.g.\ \cite[Lemma V.3.10.d]{Bo}) with Sch{\"u}rmann's \cite[Proposition 4.0.2.3]{Sch03}.

\subsection{Maximal ts-coefficient systems}\label{S: maximal E}

Many of our theorems below compare ts-Deligne sheaves on two stratifications of a single CS set, one coarsening another. For this it suffices to have a ts-coefficient system $\mc E$ defined on the regular strata of the coarser of the two stratifications for then it restricts also to a ts-coefficient system on the regular strata of the finer stratification. However, we will also be interested in theorems concerning arbitrary stratifications, and in these cases we will need to construct common coarsenings that remain adapted to $\mc E$. The full details will be provided in Section \ref{S: intrinsic}, though we discuss here some notions about coefficient systems that will be necessary at that point as these will also be needed in some of our earlier theorem statements. In particular, to construct these common coarsenings we will need to make some minor assumptions about the domains of our ts-coefficient systems.

To motivate our restrictions, we recall that for classical intersection homology theory on a stratified pseudomanifold $X$ it is observed by Borel in \cite[Section V.4]{Bo} that 
if $E$ is a local system (i.e.\ a locally constant sheaf of finitely-generated $R$-modules) defined on a dense open submanifold  of $X$ whose complement has codimension  $\geq 2$ then there is a unique largest submanifold of $X$ to which $E$ can be extended uniquely\footnote{This is no longer true if the local system is only defined on a dense open set whose complement has codimension 1. For example let $X=S^1$ with stratification $S^1\supset \{pt\}$. Suppose $E$ is the constant sheaf with stalk $\Z$ on $S^1-\{pt\}$. Then there are two non-isomorphic extensions of $E$ to $S^1$, namely the constant sheaf with stalk $\Z$ and the twisted sheaf with stalks $\Z$ such that a generator of $\pi_1(S^1)$ acts by multiplication by $-1$.
} up to isomorphism \cite[Lemma V.4.11]{Bo} (though this submanifold may  not necessarily be the largest $n$-dimensional manifold contained in  $X$ due to monodromy). 
Since it is not clear that such a statement holds for the more general ts-coefficient systems, we instead build a maximality assumption into our coefficients when necessary. Since local systems have unique such maximal extensions, we can convince ourselves that we therefore do not lose much generality.  Alternatively, if we limit ourselves to $\mc E$ composed of local systems, then Proposition \ref{P: local} below shows that maximality can be guaranteed.

\begin{definition}\label{D: maximal}
Let $X$ be an $n$-dimensional CS set. We will call a sheaf complex $\mc E$ on $X$ a \emph{maximal ts-coefficient system} if

\begin{enumerate}
\item $\text{Dom}(\mc E)$ includes an open $n$-dimensional submanifold $U_{\mc E}$ of $X$ whose complement has codimension $\geq 2$,

\item $\mc E$ is a ts-coefficient system over $U_{\mc E}$ (see Definition \ref{D: ts-coeff}), and

\item there is no larger submanifold of $X$ to which $\mc E$ extends as a ts-coefficient system.

\end{enumerate}
\end{definition}

Clearly ts-coefficient systems composed of constant sheaves defined on all of $X$ are maximal. The following lemma shows that ts-coefficient systems composed of locally constant sheaves (on open submanifolds of codimension at least 2) can be made maximal.

\begin{proposition} \label{P: local}
Suppose $\mc E$ is a ts-coefficient system defined on an open dense submanifold whose complement has codimension at least 2. If $\mc E$ is bounded (i.e.\ $\mc E^i=0$ for sufficiently large $|i|$) and each $\mc E^i$ is a local system (a locally constant sheaf of finitely-generated $R$-modules), then $\mc E$ has a maximal extension that is unique up to isomorphism. Furthermore, if $X$ is adapted to $\mc E$ then $X$ remains adapted to the extension.
\end{proposition}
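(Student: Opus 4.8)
The plan is to build the maximal extension one cohomology sheaf at a time, using the classical uniqueness result for local systems (Borel's \cite[Lemma V.4.11]{Bo}) on each $\mc H^i(\mc E)$, and then reassemble these into a complex using the fact that $\mc E$ is determined in the derived category by its cohomology sheaves together with gluing data that is itself locally constant. First I would reduce to the case where $\mc E$ has exactly two nonzero cohomology sheaves, $\mc H^0(\mc E)$ (locally constant, finitely generated, $\wp$-torsion free) and $\mc H^1(\mc E)$ (locally constant, finitely generated, $\wp$-torsion), as in Definition \ref{D: ts-coeff}; since $R$ is a PID and these are finitely generated, there is no obstruction to splitting and one may even assume $\mc E \cong \mc H^0(\mc E) \oplus \mc H^1(\mc E)[-1]$ in the derived category if one wishes, though I would avoid relying on a global splitting and instead track the connecting extension class.

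Next, for each $i$ I would apply Borel's lemma: a local system $\mc H^i(\mc E)$ defined on an open dense submanifold $W$ whose complement in $X$ has codimension $\geq 2$ extends uniquely up to isomorphism to a maximal open submanifold $W_i \supseteq W$ of $X$, with $W_i$'s complement still of codimension $\geq 2$ (monodromy around the removed codimension-$2$ strata may obstruct extending further, which is exactly why $W_i$ need not be all of $X$). Set $U_{\mc E} = \bigcap_i W_i$; this is again an open dense submanifold with complement of codimension $\geq 2$, and all the $\mc H^i$ extend simultaneously over it. The torsion-freeness and torsion conditions persist under extension because being $\wp$-torsion (free) is a stalkwise condition on a connected locally constant sheaf and $U_{\mc E}$ is connected-componentwise an extension of $W$, so the extended sheaves remain a $\wp$-coefficient system on each component; this gives condition (2) of Definition \ref{D: maximal}. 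Then I would reconstruct the extended complex: on $U_{\mc E}$ the category of bounded complexes of local systems with given locally constant cohomology is, up to quasi-isomorphism, controlled by the locally constant sheaves $\mc Ext$ and the gluing maps; concretely, I would take a bounded injective (or soft, or simply a representative built from the extended $\mc H^i$ via Postnikov/truncation towers) resolution and check that the extension is forced. Uniqueness up to isomorphism follows because any two maximal extensions restrict to isomorphic systems on the dense open $W$, each $\mc H^i$-extension is unique by Borel, and the identifications on cohomology lift to a quasi-isomorphism of complexes by the standard obstruction argument (the obstructions live in locally constant $\Ext$-groups that vanish or are determined on the dense subset, hence globally). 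Maximality of the resulting complex (condition (3)) is inherited: if $\mc E$ extended to a strictly larger submanifold, so would each $\mc H^i$, contradicting maximality of $W_i$.

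Finally, the ``furthermore'' clause is immediate: if the stratification of $X$ is adapted to $\mc E$, meaning $X - X^{n-1} \subset U$ where $U$ is the domain of $\mc E$, then since the extension only enlarges the domain, we still have $X - X^{n-1} \subset U_{\mc E}$, so $X$ remains adapted; and the extension is still a ts-coefficient system on (at least) the regular strata, so nothing about adaptedness is lost.

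I expect the main obstacle to be the reassembly step: Borel's lemma is stated for a single local system, not for a complex, so I must argue carefully that uniqueness of the extensions of the individual $\mc H^i(\mc E)$ propagates to uniqueness of the extension of $\mc E$ as an object of the derived category. The cleanest route is probably to use the truncation triangle $\mc H^0(\mc E) \to \mc E \to \mc H^1(\mc E)[-1] \xrightarrow{+1}$ and observe that the connecting morphism is an element of $\Hom_{D}(\mc H^1(\mc E)[-1], \mc H^0(\mc E)[1]) = \Ext^2_{\text{loc.\ const.}}(\mc H^1(\mc E), \mc H^0(\mc E))$, which is (the global sections of) a locally constant sheaf on $W$; this sheaf extends uniquely over $U_{\mc E}$ by Borel's lemma applied once more, and the extension class is forced by restriction to $W$. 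This pins down $\mc E$ on $U_{\mc E}$ uniquely up to isomorphism and completes the argument. A secondary, more bookkeeping-level concern is confirming that intersecting the finitely many $W_i$ does not destroy density or the codimension-$\geq 2$ condition on the complement — but this is routine since $X$ is $n$-dimensional and a finite intersection of open dense sets with codimension-$\geq 2$ complements again has codimension-$\geq 2$ complement.
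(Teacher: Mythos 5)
Your reassembly step has a genuine gap. You identify the gluing data with the connecting morphism in $\Hom_{D}(\mc H^1(\mc E)[-1],\mc H^0(\mc E)[1])\cong \Ext^2(\mc H^1(\mc E),\mc H^0(\mc E))$ and assert this is ``(the global sections of) a locally constant sheaf on $W$'' that ``extends uniquely over $U_{\mc E}$ by Borel's lemma,'' so that the extension class is ``forced by restriction to $W$.'' That is not so: this Ext group is a derived (hyper) Ext in the category of sheaves on the open set, computed by a local-to-global spectral sequence whose $E_2$-terms include $H^2\bigl(W;\SHom(\mc H^1(\mc E),\mc H^0(\mc E))\bigr)$ and $H^1$ of the local Ext sheaves. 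These depend on the global topology of the open set, and Borel's Lemma V.4.11 says nothing about the restriction maps on such cohomology groups. Since $U_{\mc E}$ and $W$ differ by a set of codimension $\geq 2$, their $H^1$ and $H^2$ can differ drastically (compare $\R^n$ with $\R^n$ minus a codimension-two plane), so the restriction $\Ext^2_{U_{\mc E}}\to\Ext^2_{W}$ is in general neither surjective nor injective; neither existence nor uniqueness of a lift of the extension class follows. Your parenthetical that one could split $\mc E\cong \mc H^0(\mc E)\oplus \mc H^1(\mc E)[-1]$ ``since $R$ is a PID'' reflects the same misconception: the obstruction lives in this sheaf-theoretic $\Ext^2$, not in module theory, and need not vanish. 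A further mismatch: you extend the cohomology sheaves $\mc H^i(\mc E)$, whereas a derived-category extension is a weaker notion than what is wanted here (an acyclic two-term complex of twisted local systems on $\R^2-\{0\}$ is quasi-isomorphic to $0$, which extends everywhere, while its terms do not), so maximality in your sense is not the same statement.

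The hypothesis you did not use is exactly the one that makes the proposition easy: each \emph{term} $\mc E^i$ of the complex, not merely each cohomology sheaf, is assumed to be a local system. The paper's proof extends each $\mc E^i$ to its unique maximal domain $U_i$ by Borel's lemma, sets $W=\cap_i U_i$ (open and dense, since all but finitely many $U_i$ are the largest open submanifold), and then applies Borel's lemma once more to extend each differential $\mc E^i\to\mc E^{i+1}$ uniquely as a map of local systems over $W$; commutativity/compatibility is automatic from uniqueness of extensions of maps. This gives the maximal extension as an honest complex of local systems, with uniqueness and maximality checked termwise, and no derived-category gluing or Ext-class lifting is needed. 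Your final ``furthermore'' observation is fine, but the core of your argument needs to be replaced by (or reduced to) this termwise construction.
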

\begin{proof}
By assumption, each $\mc E^i$ is defined on an open dense submanifold $U\subset X$ whose complement has codimension at least 2, and so by \cite[Lemma V.4.11]{Bo} each $\mc E^i$ has a unique (up to isomorphism) extension $\td {\mc E}^i$ to a maximal open subset $U_i$. Let $W=\cap_i U_i$, which remains open and dense since all but finitely many of the $U_i$ will be the largest open submanifold of $X$. Since  $U\subset U_i$, we also have $U\subset W$, and we let $\bar {\mc E}^i=\td{\mc E}^i|_W $. Also by  \cite[Lemma V.4.11]{Bo}, each boundary map $\mc E^i\to \mc E^{i+1}$ extends uniquely to a map $\bar{\mc E^i}\to \bar{\mc E}^{i+1}$. This gives us a unique (up to isomorphisms) complex $\bar {\mc E}^*$ on $W$ that cannot be extended to a larger submanifold of $X$.

The last statement of the lemma is trivial.
\end{proof}

Another nice property of local systems  is that if $E$ is a maximal local system, $X$ is adapted to $E$ with no codimension one strata, and $U_E$ is the maximal submanifold over which $E$ is defined, then $U_E$ is a union of strata of $X$. This is shown at the bottom of \cite[page 92]{Bo}. We will also need a property like this to define our common coarsenings, which motivates the following definition. Once again we will then show that this condition is automatic when $\mc E$ consists of local systems and there are no codimension one strata.

\begin{definition}\label{D: fully}
Suppose $\mc E$ is a maximal ts-coefficient system on $X$ and that $U_{\mc E}$ is the largest open submanifold on which $\mc E$ is defined. We say that the stratification of $X$ is \emph{fully adapted to $\mc E$} if 

\begin{enumerate}
\item $X-X^{n-1}\subset \mc U_{\mc E}$, and

\item $\mc U_{\mc E}$ is a union of strata of $X$.
\end{enumerate}
\end{definition}

\begin{proposition}
Suppose $\mc E$ is a maximal ts-coefficient system on $X$ such that each $\mc E^i$ is a local system. If $X$ has no codimension one strata and is adapted to $\mc E$ then it is fully adapted.
\end{proposition}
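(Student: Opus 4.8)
The plan is to reduce the statement, one sheaf $\mc E^i$ at a time, to the classical fact cited just above from the bottom of \cite[page~92]{Bo}, combined with the componentwise construction used in the proof of Proposition~\ref{P: local}.

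First I would dispose of condition~(1) of Definition~\ref{D: fully}. The set $X-X^{n-1}=X_n$ is an open $n$-dimensional submanifold of $X$ (since $X^{n-1}$ is closed and $X_n$ is an $n$-manifold by the definition of a CS set). Because $X$ is adapted to $\mc E$, the set $X_n$ lies in the domain of $\mc E$, so $X_n\cup U_{\mc E}$ is again an open submanifold of $X$ contained in that domain; hence $\mc E$ is defined on $X_n\cup U_{\mc E}$, and maximality of $U_{\mc E}$ forces $X_n\cup U_{\mc E}=U_{\mc E}$, i.e.\ $X-X^{n-1}\subseteq U_{\mc E}$. Note that $U_{\mc E}$ is then a dense (by dimensional homogeneity) open submanifold, and, because $X$ has no codimension one strata, $X-U_{\mc E}\subseteq X^{n-1}=X^{n-2}$ has codimension $\geq 2$; in particular each $\mc E^i$ is a local system defined on $U_{\mc E}\supseteq X-X^{n-2}$.

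For condition~(2) I would show that $U_{\mc E}$ is a union of strata of $X$. Since $\mc E^i$ restricts to a local system on the dense open submanifold $X-X^{n-2}$ whose complement has codimension $\geq 2$, by \cite[Lemma~V.4.11]{Bo} it has a unique maximal extension $\wt{\mc E}^i$, defined on a maximal open submanifold $U_i$, and the uniqueness statement of that lemma forces $U_{\mc E}\subseteq U_i$. As $\wt{\mc E}^i$ is a maximal local system, $X$ is adapted to it, and $X$ has no codimension one strata, the cited result at the bottom of \cite[page~92]{Bo} (in the CS-set formulation used throughout the paper) gives that each $U_i$ is a union of strata of $X$. On the other hand, running the construction in the proof of Proposition~\ref{P: local} on $\mc E$ itself produces a ts-coefficient system on $\bigcap_i U_i$ restricting to $\mc E$ over $U_{\mc E}$; since $\bigcap_i U_i$ is an open submanifold containing $U_{\mc E}$ and $\mc E$ is already maximal, this forces $U_{\mc E}=\bigcap_i U_i$. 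An intersection of unions of strata is again a union of strata (the strata partition $X$), so $U_{\mc E}$ is a union of strata, and together with the previous paragraph this proves that $X$ is fully adapted to $\mc E$.

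The argument is essentially all bookkeeping, and that is exactly where care is needed: one must check that ``adapted'' together with ``maximal'' really yields $X_n\subseteq U_{\mc E}$, and that the componentwise maximal extensions $\wt{\mc E}^i$ reassemble to give $U_{\mc E}=\bigcap_i U_i$ rather than something strictly larger. A minor point is that the proof of Proposition~\ref{P: local} is phrased for a bounded complex $\mc E$ (so that all but finitely many $U_i$ equal the full manifold locus of $X$, keeping $\bigcap_i U_i$ open); one either assumes $\mc E$ bounded, as is standard for coefficient systems, or observes directly that this causes no difficulty. The only genuine input is the reduction to a single local system, where \cite[p.~92]{Bo} does all the work.
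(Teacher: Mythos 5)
Your argument is correct at the level of rigor the paper allows itself, but it takes a genuinely different route from the paper's proof. The paper argues directly and locally, for the whole complex at once: by contradiction it takes a stratum $S$ of minimal codimension meeting $U_{\mc E}$ without being contained in it, uses a distinguished neighborhood $N\cong B\times cL$ together with \cite[Lemma 2.10.4]{GBF35} to see that the points of $B\times\{v\}$ are manifold points, pulls back $\mc E|_{\{y\}\times cL}$ along the projection $B\times cL\to cL$ to produce an extension of $\mc E$ over $N$ (uniqueness of extensions, \cite[Lemma V.4.11]{Bo}, guarantees agreement on the overlap), so maximality forces $N\subset U_{\mc E}$; an open-and-closed argument in the connected stratum $S$ then finishes. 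You instead reduce componentwise: extend each $\mc E^i$ maximally to $U_i$, quote the single-local-system union-of-strata statement (the fact the paper attributes to the bottom of \cite[page 92]{Bo}) to see each $U_i$ is a union of strata, reassemble via the construction of Proposition \ref{P: local}, and use maximality of $\mc E$ to force $U_{\mc E}=\bigcap_i U_i$. This is shorter and reuses Proposition \ref{P: local}, but it buys that by outsourcing the geometric content to the page-92 statement, which Borel proves for pseudomanifold stratifications; the paper's direct proof is exactly the CS-set (and complex-of-local-systems) adaptation of that cone-neighborhood argument, so your citation ``in the CS-set formulation'' leans on what is essentially the single-sheaf special case of the proposition being proved --- defensible, since the paper itself asserts that fact in the paragraph preceding Definition \ref{D: fully}, but it is where the real work sits. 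Two smaller points: your treatment of condition (1) of Definition \ref{D: fully} via adaptedness plus maximality is fine (in the paper it also follows because a union of strata whose complement has codimension $\geq 2$ must contain every regular stratum); and, as you note, your route needs $\mc E$ bounded so that $\bigcap_i U_i$ stays open --- a hypothesis appearing in Proposition \ref{P: local} but not needed by the paper's direct argument.
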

\begin{proof}
The proof is essentially the same as the argument on \cite[page 92]{Bo}: We will proceed by contradiction. Let $S$ be a stratum of $X$ of minimal codimension so that $S$ intersects $U_{\mc E}$ but is not contained in it. Since $X$ is adapted to $\mc E$ we must have $\codim(S)\geq 1$. Suppose $x\in S$ has a distinguished neighborhood $N\cong B\times cL$ such that there is some point $y\in B\times \{v\}$ (with $v$ the cone vertex) such that $y\in U_{\mc E}$. We claim that then $x\in U_{\mc E}$. 

First, since $x$ and $y$ are in the same stratum of $X$ and as $y$ must have a Euclidean neighborhood in $X$ (since $y\in U_{\mc E}$), \cite[Lemma 2.10.4]{GBF35} implies that $x$ also has a Euclidean neighborhood; thus both $x$ and $y$ (and similarly all points of $B\times \{v\}$) are contained in the maximal submanifold of $X$. Furthermore, by assumption $U_{\mc E}$ must contain $N-(N\cap S)=B\times (cL-\{v\})$, and if $\pi:B\times cL\to cL$ is the projection then $\pi^*(\mc E|_{\{y\}\times cL})$ is a local system on $N$ whose restriction to $N-S$ is isomorphic to $\mc E|_{N-S}$. Since extension of local systems is unique when there are no codimension one strata by \cite[Lemma V.4.11]{Bo},  $\pi^*(\mc E|_{\{y\}\times cL})$ must agree with $\mc E$ where they overlap, and so we must have $N\subset U_{\mc E}$ or else the maximality of $U_{\mc E}$ would be contradicted. 

Now, since $U_{\mc E}$ is open in $X$, we have $U_{\mc E}\cap S$ open in $S$. The above argument shows that if $x$ is in the closure of $U_{\mc E}\cap S$ in $S$ then $x\in U_{\mc E}\cap S$. So $U_{\mc E}\cap S$ is open and closed in the connected set $S$ and is thus all of $S$. 
\end{proof}

The preceding proposition can fail if there are codimension one strata:

\begin{example} Let $E$ be the local system on $\R^2-\{0\}$ with $\Z$ stalks and nontrivial monodromy around the origin. Let $X=\R^2$ filtered as $\R^2\supset \text{$x$-axis}$. Then $E$ is maximal and $X$ is adapted to $E$, but it is not fully adapted, though it can be refined to be so. 

As a more dramatic example, consider the example from Remark \ref{R: maladapted} of a maximal local system $E$ that is defined on the complement in $\R^2$ of $(0,0)\cup\{(0,1/n)|n\in \Z_{\geq 1}\}$. If we again filter $X=\R^2$ as $\R^2\supset \text{$x$-axis}$ then again $X$ is adapted to $E$, but there is no fully adapted refinement. 
\end{example}

\section{Topological invariance}\label{S: inv}

In this section we prove our main topological invariance theorems. These are mostly sufficiency statements, demonstrating that if certain conditions hold between different perversities on different stratifications of the same space, as well as certain relations between the perversity on the more refined stratification and the ts-coefficient system, then the two corresponding ts-Deligne sheaves are quasi-isomorphic. We consider necessity in Section \ref{S: necessity}.

The following definition establishes our main criteria for comparison of ts-Deligne sheaves. The first set of conditions is in a sense more important, as the second set is satisfied automatically for sufficiently simple coefficient systems -- see Remark \ref{R: simple system}.  

\begin{definition}\label{D: comp perv}
Suppose that $X$ and $\X$ denote two CS set stratifications of the same underlying space with $\X$ coarsening $X$. Let $\vec p$ and $\vec \p$ be respective ts-perversities on $X$ and $\X$, and let $\mc E$ be a ts-coefficient system to which $\X$ (and hence also $X$) is adapted. We will say that $\vec p$ and $\vec \p$ are \emph{$\mc E$-compatible} ts-perversities  if the following conditions hold whenever a singular stratum $S$ of $X$ is contained in a (singular or regular) stratum $\bS$ of $\X$:

\begin{enumerate}
\item\label{I: sing in sing} If $\bS$ is singular then  $\vec \p_1(\bS)\leq  \vec p_1(S)\leq\vec \p_1(\bS)+\codim(S)-\codim(\bS)$, and furthermore

\begin{enumerate}

\item if $\vec p_1(S)=\vec \p_1(\bS)$ then $\vec p_2(S)\supset\vec \p_2(\bS)$, 

\item if $\vec p_1(S)=\vec \p_1(\bS)+\codim(S)-\codim(\bS)$, then $\vec p_2(S)\subset\vec \p_2(\bS)$.
\end{enumerate}

\item\label{I: sing in reg}  If $\bS$ is regular then $-1\leq \vec p_1(S)\leq \codim(S)-1$, and furthermore

\begin{enumerate}

\item if $\vec p_1(S)=-1$ then $H^1(\mc E_x)=0$ and $H^0(\mc E_x)$ is $\vec p_2(S)$-torsion for all $x\in S$,

\item if $\vec p_1(S)=0$ then $H^1(\mc E_x)$ is $\vec p_2(S)$-torsion for all $x\in S$,

\item if $\vec p_1(S)= \codim(S)-2$ then $H^0(\mc E_x)$ is $\vec p_2(S)$-torsion free for all $x\in S$,

\item if $\vec p_1(S)=\codim(S)-1$  then $H^0(\mc E_x)=0$ and $H^1(\mc E_x)$ is $\vec p_2(S)$-torsion free for all $x\in S$.
\end{enumerate}
\end{enumerate}

\end{definition}

\begin{remark}\label{R: simple system}
If $0\leq \vec p_1(S)\leq \codim(S)-2$ for all $S$ and $\mc E$ is a globally defined local system of free modules concentrated in degree $0$, then all of condition \ref{I: sing in reg} holds automatically.
\end{remark}

\begin{remark}
Since $\mc E$ is clc on the regular strata by assumption, the torsion conditions of property \ref{I: sing in reg} hold for all $x\in S$ if and only if they hold for some $x\in S$.
\end{remark}

\begin{remark}\label{R: dichot}
We notice that if $\bS$ is singular then compatibility as just defined places no absolute constraints on the values of $\vec p(S)$ and $\vec \p(\bS)$ but only relative constraints on how these must relate to each other. By contrast, if $\bS$ is regular then there are absolute constraints on the values of $\vec p_1(S)$ and also, for the extreme values of $\vec p_1(S)$, constraints on how $\vec p_2(S)$ relates to $\mc E$. We will see such a  dichotomy throughout. One consequence is that these conditions forbid any codimension one stratum of $X$ being contained in a regular stratum of $\X$ (unless $\mc E$ is trivial on that regular stratum) as this would require either $\vec p_1(S)=-1=\codim(S)-2$ or $\vec p_1(S)=0=\codim(S)-1$, and in either case the combined torsion assumptions imply each $H^*(\mc E_x)=0$ so that $\mc E$ is trivial. 
\end{remark}

\begin{remark}\label{R: efficient}
One situation in which the conditions of Definition \ref{D: comp perv} are \emph{not} necessary for topological invariance is when perversity values are so extreme that their specific values become irrelevant. For example, if $\vec p_1(S)<-1$ then $\mc P^*_x=0$ for any $x\in S$ regardless of the actual value of $\vec p(S)$. At the other extreme,  \cite[Theorem 4.15]{GBF32} implies that if $X$ is a stratified pseudomanifold, $\mc P^*$ a ts-Deligne sheaf on $X$, and $x\in X_{n-k}$ then $H^i(\mc P^*_x)=0$ for $i>k$. The main technical tool in the proof of that theorem is  \cite[Lemma 4.14]{GBF32}, but the argument for this lemma applies to any manifold stratified space (cf.\ \cite[Lemma V.9.5]{Bo}). The proof of  \cite[Theorem 4.15]{GBF32} therefore generalizes to CS sets, using Lemma \ref{L: prod} below in the argument instead of the citation to \cite[Lemma V.3.8.b]{Bo}. Consequently, if  $\vec p_1(S)\geq k$ then again the specific values don't matter. Therefore, for the purposes of Theorem \ref{T: top} we could add to the conditions of Definition \ref{D: comp perv} the possibility that if $S\subset \bS$ then either both $\vec p_1(S)$ and $\vec \p_1(\bS)$ are $<-1$ or that they are both sufficiently large. However, rather than complicate Definition \ref{D: comp perv} even further, when considering the necessity of the conditions it is reasonable to assume that the ts-perversities are \emph{efficient}, i.e.\ that $-1\leq \vec p_1(S)\leq \codim(S)$ for all singular strata $S$. If $\vec p$ is not efficient, it can always be replaced by a ts-perversity that is efficient without altering $\mc P^*$. In any case, Theorem \ref{T: top} does not require such assumptions.
\end{remark}

We now come to the main theorem of the paper, which will be the basis for the results in the rest of Section \ref{S: inv}. 

\begin{theorem}\label{T: top}
Suppose that $X$ and $\X$ denote two CS set stratifications of the same underlying space with $\X$ coarsening $X$. Let $\mc E$ be a ts-coefficient system such that $\X$ is adapted to $\mc E$ (and hence so is $X$).  Let $\vec p$ and $\vec \p$ be respective ts-perversities on $X$ and $\X$ that are $\mc E$-compatible, and let $\mc P^*$ and $\PP^*$ be the respective ts-Deligne sheaves with coefficients $\mc E$. 
Then $\mc P^*$ is quasi-isomorphic to $\PP^*$.
\end{theorem}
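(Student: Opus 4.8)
The natural approach is to verify that $\mc P^*$—the ts-Deligne sheaf on the finer stratification $X$—satisfies the axioms TAx1$(\X,\vec \p,\mc E)$ (or the equivalent TAx1'$(\X,\vec \p,\mc E)$) with respect to the coarser stratification $\X$; by Theorem \ref{T: ax equiv} this forces $\mc P^*\cong\PP^*$. Since $\mc P^*$ is already known to be $\X$-clc (because it is $X$-clc and $X$ refines $\X$, so each $\mc H^i(\mc P^*)$, being locally constant on strata of $X$, is in particular locally constant on the larger strata of $\X$—this needs a small argument, or one invokes Sch{\"u}rmann's constructibility results as in the proof of Theorem \ref{T: ax equiv}), axioms (a) and (b) are immediate: boundedness and vanishing in negative degrees transfer directly, and $\mc P^*|_{U_1}\cong\mc E|_{U_1}$ holds because the domains $U_1$ for the two stratifications both contain the regular strata and $\mc P^*$ restricts to $\mc E$ on the (common or nested) regular locus. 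So the content is entirely in the stalk-truncation axiom (c) and the attaching/costalk axiom (d), which must now be checked at points $x$ lying in a singular stratum $\bS$ of $\X$.

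**The main case analysis.** Fix $x\in\bS$, a singular stratum of $\X$ of codimension $\bar k=\codim(\bS)$. The point $x$ lies in some stratum $S$ of $X$ with $S\subset\bS$, and $S$ may be regular or singular in $X$. I would compute $H^i(\mc P^*_x)$ and $H^i(f_x^!\mc P^*)$ using the local structure: a distinguished neighborhood of $x$ in $X$ has the form $B\times cL$ where $L$ is the link in $X$, and the stalk $\mc P^*_x$ is the hypercohomology of the ts-Deligne sheaf restricted to this cone, which by the standard cone formula (adapted to the torsion-tipped truncations) is a truncated version of $\H^*(L;\mc P^*|_L)$; similarly $f_x^!\mc P^*$ is governed by the compactly-supported hypercohomology over the cone, giving the analogous costalk formula with a degree shift by $n-\bar k$ plus (roughly) a ``dual'' truncation. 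The two subcases of Definition \ref{D: comp perv} then drive two distinct verifications. When $\bS$ is \emph{regular} in $\X$, the relevant stalk is just $\mc E_x$ (up to the cone truncation picking out a piece of $H^0$ and $H^1$), and conditions \ref{I: sing in reg}(a)--(d) are precisely calibrated so that the truncation degree $\vec \p_1(\bS)+1=1$ (for a regular $\bS$, where no actual truncation over $\bS$ happens) matches: one checks $H^i(\mc P^*_x)$ vanishes above degree $1$ and $H^1$ is appropriately torsion, and dually the costalk vanishes low enough and the first nonvanishing costalk group is torsion-free. When $\bS$ is \emph{singular}, the inequalities $\vec\p_1(\bS)\le\vec p_1(S)\le\vec\p_1(\bS)+\codim(S)-\codim(\bS)$ together with the torsion refinements (a), (b) at the two endpoints are exactly what is needed so that the composite of truncations over all the $X$-strata between codimension $\bar k$ and the codimension of $S$ collapses to a single torsion-tipped truncation at degree $\vec\p_1(\bS)+1$ with torsion set $\vec\p_2(\bS)$—that is, the inductive iteration of $\mf t_{\le\vec p}^{X_j}$ over the intermediate strata does not disturb the coarser axioms.

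**Induction and the key obstacle.** I would organize this as an induction on the codimension filtration of $\X$, i.e.\ over $U^{\X}_2\subset U^{\X}_3\subset\cdots$, showing at each stage that $\mc P^*$ restricted to $U^{\X}_{\bar k+1}$ satisfies the axioms through codimension $\bar k$; the inductive step reduces (via the attaching construction and the local cone structure) to the stalk/costalk computations just described. The heaviest part—and the main obstacle—will be the singular-in-singular case (condition \ref{I: sing in sing}), specifically bookkeeping the effect of a \emph{chain} of torsion-tipped truncations $\mf t_{\le\vec p}^{X_{j_1}}\cdots\mf t_{\le\vec p}^{X_{j_r}}$ applied at the various $X$-strata through which one passes while moving from a regular point out to $x\in S\subset\bS$, and showing that this chain produces exactly the single torsion-tipped truncation $\mf t_{\le\vec\p}^{\bS}$ demanded by TAx1$(\X,\vec\p,\mc E)$. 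The torsion-tipped truncation is not an exact functor and the ``tip'' in degree $\vec p_1(S)+1$ interacts subtly with subsequent truncations, so the endpoint conditions \ref{I: sing in sing}(a),(b) (the containments $\vec p_2(S)\supset\vec\p_2(\bS)$ and $\vec p_2(S)\subset\vec\p_2(\bS)$) must be used precisely to control whether the tipped torsion survives or gets absorbed. I expect this to require a careful lemma on how $\mf t_{\le a}^{\mf q}$ and $\mf t_{\le b}^{\mf q'}$ compose when $a\le b$, together with the cone/suspension computations; once that lemma is in place, the axiom verification is a (lengthy but routine) case check against Definition \ref{D: comp perv}, and the conclusion follows from Theorems \ref{T: ax equiv} and the uniqueness of sheaves satisfying TAx1.
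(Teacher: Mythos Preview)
Your overall plan---verify one of the two ts-Deligne sheaves against the axioms for the \emph{other} stratification and invoke uniqueness---is exactly right, but you have chosen the wrong direction, and this creates the very obstacle you flag at the end. The paper shows that $\PP^*$ (built on the \emph{coarser} $\X$) satisfies TAx1'$(X,\vec p,\mc E)$ on the \emph{finer} $X$, not the other way around. This direction makes every step you call ``immediate'' actually immediate: $\X$-clc genuinely implies $X$-clc (a sheaf locally constant on each stratum of $\X$ is locally constant on each smaller stratum of $X$---your argument reverses this implication, which is false); since $X-X^{n-1}\subset \X-\X^{n-1}$, the restriction $\PP^*|_{X-X^{n-1}}\cong\mc E$ is an honest restriction; and for axiom~(c), the inequality $\vec\p_1(\bS)\le\vec p_1(S)$ means the known vanishing of $H^i(\PP^*_x)$ above $\vec\p_1(\bS)+1$ is a \emph{stronger} statement than the required vanishing above $\vec p_1(S)+1$, so it follows for free (with the torsion refinement handled by the endpoint condition $\vec p_2(S)\supset\vec\p_2(\bS)$). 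Only axiom~(d) requires genuine computation, and there the paper works in a distinguished neighborhood $\R^{n-\ell}\times cL$ of $x$ in $\X$, uses the product decomposition to reduce $f_x^!\PP^*$ to $f_v^!R\pi_{2*}\PP^*[-(n-\ell)]$, and reads off the needed vanishing and torsion-freeness from the $\X$-attaching axiom already known for $\PP^*$.

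In your direction, by contrast, none of (a), (b), (c) is free. For (a), $X$-clc does \emph{not} imply $\X$-clc (locally constant on pieces of a partition says nothing about local constancy on their union). For (b), $\mc P^*|_{\X-\X^{n-1}}\cong\mc E$ is a substantive claim: singular strata of $X$ may sit inside regular strata of $\X$, and at such points $\mc P^*$ is a nontrivial truncation---this is precisely where the singular-in-regular conditions of Definition~\ref{D: comp perv} must be used, so it is not a triviality to be dispatched before the ``content.'' For (c), knowing $H^i(\mc P^*_x)=0$ for $i>\vec p_1(S)+1$ does not give vanishing for $i>\vec\p_1(\bS)+1$ when $\vec\p_1(\bS)<\vec p_1(S)$; you would have to prove this vanishing from scratch, which is exactly the ``collapse a chain of torsion-tipped truncations'' lemma you anticipate. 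That lemma is not needed at all if you reverse the direction. So: keep the axiomatic strategy, but verify $\PP^*$ against the $X$-axioms instead.
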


\begin{proof}
By Theorem \ref{T: ax equiv}, $\mc P^*$ is characterized uniquely up to quasi-isomorphism by the axioms TAx1'$(X,\vec p,\mc E)$. Therefore, to prove the proposition, it is sufficient to show that $\PP^*$ also satisfies these axioms. We will use that $\PP^*$ already satisfies  the axioms TAx1'$(\X,\vec \p,\mc E)$.

\emph{Axiom \ref{A': bounded}.} Since  $\PP^*$ is $\X$-clc, it is also $X$-clc since $X$ refines $\X$.  Furthermore, $\PP^*$ is $0$ for $*<0$ by construction, and the cohomology is nontrivial in a finite range of degrees by Axioms b and c, which we verify below, and by the definition of ts-coefficient systems.

\emph{Axiom \ref{A': coeffs}.}
 We know $\PP^*|_{\X-\X^{n-1}}\cong \mc E|_{\X-\X^{n-1}}$, but   $X-X^{n-1}\subset \X-\X^{n-1}$ by assumption, so also 
$\PP^*|_{X-X^{n-1}}\cong \mc E|_{X-X^{n-1}}$.

\emph{Axiom \ref{A': truncate}.} Suppose $x\in S\subset X_{n-k}$ for $k\geq 1$. We must show that $H^i(\PP^*_x)=0$ for $i>\vec p_1(S)+1$ and that  $H^{\vec p_1(S)+1}(\PP^*_x)$ is $\vec p_2(S)$-torsion.

 First suppose that $x$ is contained in a regular stratum of $\X$. Then $\PP^*_x\cong \mc E_x$, so $H^i(\PP^*_x)\cong H^i(\mc E_x)$. We recall that $H^i(\mc E_x)$ is automatically $0$ if $i\neq 0,1$ and that $-1\leq \vec p_1(S)\leq \codim(S)-1$ by the compatibility assumption. If $\vec p_1(S)\geq 1$, then $\vec p_1(S)+1\geq 2$ and
 $H^i(\mc E_x)=0$ for $i\geq\vec p_1(S)+1$. If $\vec p_1(S)=0$ then $H^i(\mc E_x)=0$ for $i>\vec p_1(S)+1=1$ while $H^{\vec p_1(S)+1}(\mc E_x)=H^1(\mc E_x)$ is $\vec p_2(S)$-torsion in this case by the $\mc E$-compatibility assumptions. Finally, if $\vec p_1(S)=-1$, the compatibility assumptions imply that $H^1(\mc E_x)=0$, and so $H^i(\mc E_x)=0$ for $i>0$, while $H^0(\mc E_x)$ is $\vec p_2(S)$-torsion\footnote{We see in this argument why $\vec p_1(S)\geq -1$ is required, as well as our torsion assumptions; see also Section \ref{S: sing in reg}, below.}.

Now suppose that $x$ is contained in the singular stratum $\bS$ of $\X$. Then we know $H^i(\PP^*_x)=0$ for $i>\vec \p_1(\bS)+1$ and $H^{\vec \p(\bS)+1}(\PP^*_x)$ is $\vec \p_2(\bS)$-torsion. But by assumption $\vec \p_1(\bS)\leq  \vec p_1(S)$, and if $\vec \p_1(\bS)= \vec p_1(S)$ then $\vec \p_2(\bS)\subset \vec p_2(S)$. It follows that $\PP^*$ satisfies Axiom c of TAx1'$(X,\vec p,\mc E)$.

\emph{Axiom \ref{A': attach}.}  Again suppose  $x\in S\subset X_{n-k}$ for $k\geq 1$, and let $f_x:\{x\}\into X$ be the inclusion. 
We must show that $H^i(f_x^!\PP^*)=0$ for $i\leq \vec p_1(S)+n-k+1$ and that it is $\vec p_2(S)$-torsion free when $i=\vec p_1(S)+n-k+2$. 

Once again we first suppose that $x$ is contained in a regular stratum of $\X$. Then we have $f_x^!\PP^*\cong f_x^*\PP^*[-n]\cong \mc E_x[-n]$ by \cite[Proposition V.3.7.b]{Bo} and by assumption, and so  $H^i(f_x^!\PP^*)\cong H^i(\mc E_x[-n])\cong H^{i-n}(\mc E_x)$. So we must show $H^j(\mc E_x)$ is $0$ for $j\leq \vec p_1(S)-k+1$ and that it is $\vec p_2(S)$-torsion free when $j=\vec p_1(S)-k+2$. Recall that $H^j(\mc E_x)=0$ automatically for $j\neq 0,1$. 
Now, as $\vec p$ and $\vec \p$ are $\mc E$-compatible, we have that $\vec p_1(S)\leq \codim(S)-1=k-1$.
If $\vec p_1(S)\leq k-3$ then $\vec p_1(S)-k+2\leq -1$ and so $H^j(\mc E_x)=0$ for $j\leq \vec p_1(S)-k+2$. If $\vec p_1(S)=k-2$, then $\vec p_1(S)-k+2=0$ so similarly  $H^j(\mc E_x)=0$ for $j<\vec p_1(S)-k+2$ while $H^{\vec p_1(S)-k+2}(\mc E_x)=H^0(\mc E_x)$ is $\vec p_2(S)$-torsion free in this case by the compatibility assumptions. Finally, if $\vec p_1(S)=k-1$ so that  $\vec p_1(S)-k+2=1$, we have that $H^0(\mc E_x)=0$ and $H^1(\mc E_x)$ is $\vec p_2(S)$-torsion free again by the compatibility conditions\footnote{We see in this argument why $\vec p_1(S)\leq \codim(S)-1$ is required, as well as our torsion-free assumptions; see also Section \ref{S: sing in reg}, below.}.

Next we suppose that $x\in S\subset X_{n-k}$ and that $S\subset \bS$ for $\bS\subset \X_{n-\ell}$ a singular stratum of $\X$. 
Let $U\cong \R^{n-\ell}\times cL$ be a distinguished neighborhood of $x$ in the $\X$ stratification. By abuse of notation, we can identify  $U$ with $\R^{n-\ell}\times cL$, letting $\PP^*$ also denote its pullback to this product neighborhood, which remains $\X$-clc. 
Let $x=(y,v)$ with $f_y:\{y\}\into \R^{n-\ell}$ and $f_v:\{v\}\into cL$ the vertex inclusion.
Let $\pi_1:\R^{n-\ell}\times cL\to \R^{n-\ell}$ and $\pi_2:\R^{n-\ell}\times cL\to cL$ be the projections, and let $s:cL\into \{y\}\times cL$ be the inclusion. By \cite[Proposition 2.7.8]{KS} (letting the $Y_n$ there be close balls in $\R^{n-\ell}$), we have $\PP^*\cong \pi_2^*R\pi_{2*}\PP^*$. Let $\mc R_A$ denote the constant sheaf on the space $A$ with stalks in our ground ring $R$. Then  $$\PP^*\cong\mc R_{\R^{n-\ell}\times cL} \overset{L}{\otimes} \PP^*\cong 
\pi_1^*\mc R_{\R^{n-\ell}}\overset{L}{\otimes} \pi_2^*R\pi_{2*}\PP^*.$$  By \cite[Remark V.10.20.c]{Bo}, whose hypotheses are satisfied due to the constructibility  (see \cite[Proposition 4.0.2.2]{Sch03}), $$f^!_{x}\PP^*\cong f^!_y \mc R_{\R^{n-\ell}}\overset{L}{\otimes} f_v^!R\pi_{2*}\PP^*.$$

For the first factor, by \cite[Proposition V.3.7.b]{Bo} we have $f^!_y \mc R_{\R^{n-\ell}}\cong f^*_y\mc R_{\R^{n-\ell}}[-(n-\ell)]=R[-(n-\ell)]$, and so 
$$f^!_{x}\PP^*\cong f_v^!R\pi_{2*}\PP^*[-(n-\ell)].$$

To compute  $H^i\left(f_v^!R\pi_{2*}\PP^*\right)$, consider the long exact 
 sequence \cite[Section V.1.8]{Bo}
$$\to H^i\left(f_v^!R\pi_{2*}\PP^*\right)\to \H^i\left(cL;R\pi_{2*}\PP^*\right)\xr{\alpha} \H^i\left(cL-\{v\}; \bar{\mf i}^*R\pi_{2*}\PP^*\right)\to,$$
where $\bar{\mf i}:cL-\{v\}\to cL$ fits into the 
Cartesian square

\begin{diagram}
\R^{n-\ell}\times cL-\R^{n-\ell}\times \{v\}&\rTo^{\bar \pi_2}&cL-\{v\}\\
\dInto_{\mf i}&&\dInto_{\bar{\mf i}} \\
\R^{n-\ell}\times cL&\rTo^{\pi_2}&cL.
\end{diagram}
We can see that $\bar{\mf i}^*R\pi_{2*}\PP^*\cong R\bar \pi_{2*}\mf i^*\PP^*$ by replacing $\PP^*$ with an injective resolution and then considering sections over open sets. It follows that  $\alpha$ is isomorphic to the attaching map $\H^i(\R^{n-\ell}\times cL;\PP^*)\to  \H^i(\R^{n-\ell}\times cL; R\mf i_*\mf i^*\PP^*)= \H^i(\R^{n-\ell}\times (cL-\{v\}); \mf i^*\PP^*)$. As $\PP^*$ and $R\mf i_*\mf i^*\PP^*$ are $\X$-clc, \cite[Proposition 4.0.2]{Sch03} implies that restriction to smaller distinguished neighborhoods of $x$ in $\X$ yields a constant map of constant direct systems, 
and so this hypercohomology attaching map is isomorphic to the map it induces stalkwise in the direct limit. 
And by the axioms TAx1$(\X,\vec \p,\mc E)$, which $\PP^*$ satisfies, this attaching map induces stalk-wise cohomology isomorphisms for $i\leq \vec \p_1(\bS)$ and an isomorphism onto the $\vec \p_2(\bS)$-torsion module for $i=\vec \p_1(\bS)+1$. So  $H^i\left(f_v^!R\pi_{2*}\PP^*\right)=0$ for $i\leq \vec \p_1(\bS)+1$.  
We also know that  $\H^{\vec \p_1(\bS)+2}\left(cL;R\pi_{2*}\PP^*\right)\cong \H^{\vec \p_1(\bS)+2}\left(\R^{n-\ell}\times cL;\PP^*\right)\cong \H^{\vec \p_1(\bS)+2}\left(\PP^*_x\right)$, the latter by \cite[Proposition 4.0.2.2]{Sch03} again, and this is $0$ by axiom TAx1'c for $\PP^*$. Since we have already noted that in degree  $\vec \p_1(\bS)+1$ the map $\alpha$ is an isomorphism onto the $\vec \p_2(\bS)$-torsion module of $H^{\vec \p_1(\bS)+1}\left(cL-\{v\}; \bar{\mf i}^*R\pi_{2*}\PP^*\right)$, it follows that $H^{\vec \p_1(\bS)+2}\left(f_v^!R\pi_{2*}\PP^*\right)$ is $\vec \p_2(\bS)$-torsion free.

Returning now to $H^i(f_x^!\PP^*)\cong H^i(f_v^!R\pi_{2*}\PP^*[-(n-\ell)])= H^{i-n+\ell}(f_v^!R\pi_{2*}\PP^*)$, we conclude that 
$H^i(f_x^!\PP^*)=0$ when $i-n+\ell\leq \vec \p_1(\bS)+1$, i.e.\ when $i\leq \vec \p_1(\bS)+n-\ell+1$, and that $H^{\vec \p_1(\bS)+n-\ell+2}(f_x^!\PP^*)$ is  $\vec \p_2(\bS)$-torsion free. By assumption,   $\vec p_1(S)\leq\vec \p_1(\bS)+\codim(S)-\codim(\bS)=\vec \p_1(\bS)+k-\ell$, so 
$\vec \p_1(\bS)+n-\ell+1\geq   \vec p_1(S)-k+\ell+n-\ell+1=\vec p_1(S)-k+n+1$. So $H^i(f_x^!\PP^*)=0$ for $i\leq \vec p_1(S)-k+n+1$ as desired. Furthermore, $H^{\vec p_1(S)-k+n+2}(f_x^!\PP^*)$ will also be $0$ unless the above inequalities are equalities, in which case 
$H^{\vec p_1(S)-k+n+2}(f_x^!\PP^*)=H^{\vec \p_1(\bS)-\ell+n+2}(f_x^!\PP^*)$ is $\vec \p_2(\bS)$-torsion free. But we have assumed for this scenario that $\vec \p_2(\bS)\supset \vec p_2(S)$ so that this module is also $\vec p_2(S)$-torsion free. 

We have now demonstrated all the axioms, completing the proof.
\end{proof}

\subsection{Pullback and pushforward perversities}

In \cite{CST-inv}, Chataur, Saralegi, and Tanr\'e consider the invariance of intersection homology under refinement/coarsening when the perversity on the finer stratification is pulled back from a perversity on the coarser stratification or when the perversity on the coarser stratification is pushed forward from the finer stratification. In the following two subsections we consider such constructions for ts-perversities. We will see that pullback perversities can always be constructed and always result in quasi-isomorphic ts-Deligne sheaves,  generalizing \cite[Corollary 6.13]{CST-inv}. By contrast, pushforward perversities require certain conditions to be defined and then further conditions to provide quasi-isomorphic ts-Deligne sheaves. Our results about pushforwards generalize \cite[Theorem C]{CST-inv}. 

In this section we assume the following situation: We suppose that $X$ and $\X$ denote two CS set stratifications of the same underlying space with $\X$ coarsening $X$. Let $\nu:X\to\X$ denote the identity map, which is a stratified map; we sometimes refer to $\nu$ as a coarsening map. If $S$ is a stratum of $X$, let $S^{\nu}$ denote the stratum of $\X$ containing it.

\subsubsection{Pullback perversities}\label{S: pullback}
 
We first define pullback perversities. 

\begin{definition}
Let $\nu:X\to \X$ be a coarsening map. Suppose $\X$ is adapted to a ts-coefficient system $\mc E$, and let $\vec \p$ be a ts-perversity on $\X$. We define the \emph{$\mc E$-compatible pullback perversity}  $\nu_{\mc E}^*\vec \p$ on the refinement $X$ of $\X$ by:

\begin{enumerate}
\item if $S^\nu$ is singular then $\nu^*_{\mc E}\vec \p(S)=\vec \p(S^\nu)$,

\item if $S^\nu$ is regular then $\nu^*_{\mc E}\vec \p(S)=(0,\wp)$, where $\wp$ is the smallest subset of $P(R)$ such that $\mc E$ is a $\wp$-coefficient system on $S$.
\end{enumerate}
\end{definition}

\begin{remark}
Since $X$ refines $\X$ and $\mc E$ is adapted to $\X$, for each singular stratum $S$ of $X$ contained in a regular stratum of $\X$ the  $\wp$ in the second condition always exists. In fact, we could use in this condition any choice of $\wp\subset P(R)$ such that $\mc E$ is a $\wp$-coefficient system on $S$ for the purposes of the following theorem; we choose the smallest such $\wp$ just for definiteness.
\end{remark}

\begin{theorem}\label{T: pullback}
If $X$ has no stratum of codimension one contained in a regular stratum of $\X$ then
$\vec \p$ and  $\nu_{\mc E}^*\vec \p$ are $\mc E$-compatible. Consequently  $\mc P^*_{\nu^*_{\mc E} \vec \p}\cong \PP^*_{\vec \p}$. 
\end{theorem}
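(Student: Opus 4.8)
The plan is to verify the conditions of Definition~\ref{D: comp perv} for the pair $(\nu^*_{\mc E}\vec\p, \vec\p)$ and then invoke Theorem~\ref{T: top}. Since $\mc E$-compatibility is a condition checked one singular stratum $S$ of $X$ at a time, depending on whether the stratum $S^\nu$ of $\X$ containing $S$ is singular or regular, the proof splits into these two cases, matching the two cases in the definition of $\nu^*_{\mc E}\vec\p$.

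First I would handle the case $S^\nu$ singular. Here $\nu^*_{\mc E}\vec\p(S) = \vec\p(S^\nu)$ by definition, so in the notation of Definition~\ref{D: comp perv} (with $\vec p = \nu^*_{\mc E}\vec\p$ and $\bS = S^\nu$) we have $\vec p_1(S) = \vec\p_1(\bS)$ and $\vec p_2(S) = \vec\p_2(\bS)$. Condition~\ref{I: sing in sing} requires $\vec\p_1(\bS)\leq\vec p_1(S)\leq\vec\p_1(\bS)+\codim(S)-\codim(\bS)$; the left inequality is an equality, and the right holds because $\codim(S)\geq\codim(\bS)$ (as $S\subset\bS$ and $X$ refines $\X$). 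The sub-conditions (a) and (b) become, respectively, ``$\vec p_2(S)\supset\vec\p_2(\bS)$'' and (in the case $\codim(S)=\codim(\bS)$) ``$\vec p_2(S)\subset\vec\p_2(\bS)$'', both of which hold trivially since $\vec p_2(S) = \vec\p_2(\bS)$.

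Next I would handle the case $S^\nu$ regular, which is where the codimension-one hypothesis enters. Here $\nu^*_{\mc E}\vec\p(S) = (0,\wp)$ with $\wp$ the smallest set of primes making $\mc E$ a $\wp$-coefficient system on $S$, so $\vec p_1(S) = 0$. Condition~\ref{I: sing in reg} requires $-1\leq\vec p_1(S)\leq\codim(S)-1$; the lower bound is immediate, and the upper bound $0\leq\codim(S)-1$, i.e.\ $\codim(S)\geq 1$, holds precisely because $S$ is a singular stratum of $X$ \emph{and}, by hypothesis, is not of codimension one when contained in a regular stratum of $\X$ --- wait, more carefully: $S$ singular already gives $\codim(S)\geq 1$, so $0\leq\codim(S)-1$ always holds; the codimension-one hypothesis is instead needed to avoid the degenerate overlap $\vec p_1(S) = 0 = \codim(S)-1$. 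When $\codim(S)\geq 2$ only sub-condition (b) is active (since $\vec p_1(S) = 0$ is neither $-1$, nor $\codim(S)-2$, nor $\codim(S)-1$ when $\codim(S)\geq 3$; and when $\codim(S)=2$ both (b) and (c) apply), and (b) asks that $H^1(\mc E_x)$ be $\vec p_2(S) = \wp$-torsion for $x\in S$, which holds by the very definition of $\wp$ (as $\mc H^1(\mc E)$ is a locally constant sheaf of $\wp$-torsion modules on $S$). When $\codim(S) = 2$ one also needs (c): $H^0(\mc E_x)$ is $\wp$-torsion free; this holds because $\mc H^0(\mc E)$ is a sheaf of $\wp$-torsion-free modules by the definition of a $\wp$-coefficient system and the fact that the domain of $\mc E$ contains $S$. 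The main subtlety here, and what I would flag as the point requiring care, is confirming that when $\codim(S) = 1$ the hypothesis genuinely excludes the bad case: if $\codim(S) = 1$ and $S^\nu$ is regular then we would need $\vec p_1(S) = 0 = \codim(S)-1$, forcing via sub-conditions (b) and (d) that $H^1(\mc E_x)$ be both $\wp$-torsion and $\wp$-torsion free hence zero, and via (d) that $H^0(\mc E_x) = 0$, i.e.\ $\mc E$ trivial on $S$ --- which is exactly the degeneracy Remark~\ref{R: dichot} points out, and which the hypothesis of the theorem rules out by assumption. Having checked all cases of Definition~\ref{D: comp perv}, I conclude $\vec\p$ and $\nu^*_{\mc E}\vec\p$ are $\mc E$-compatible, and then Theorem~\ref{T: top} immediately gives $\mc P^*_{\nu^*_{\mc E}\vec\p}\cong\PP^*_{\vec\p}$.
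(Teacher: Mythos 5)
Your proof is correct and follows exactly the route the paper takes: the paper simply declares the $\mc E$-compatibility check ``immediate from the definitions'' and then cites Theorem \ref{T: top}, and your argument is that same check written out in full (including the correct observation that the codimension-one hypothesis is what rules out the degenerate case $\vec p_1(S)=0=\codim(S)-1$ of Remark \ref{R: dichot}). No gaps; you have just supplied the details the paper omits.
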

\begin{proof}
The first statement is immediate from the definitions.
The second follows by Theorem \ref{T: top}.
\end{proof}

\begin{remark}
If we assume in  Theorem \ref{T: pullback} that $E$ is a local system concentrated in degree $0$ (and so, in particular, a $\emptyset$-coefficient system) and if $\vec p_2(S)=\emptyset$ for all $S$, then the theorem becomes a statement about ordinary intersection homology that generalizes 
 \cite[Corollary 6.12]{CST-inv}. 
\end{remark}

\subsubsection{Pushforward perversities}\label{S: pushforward}

Chataur, Saralegi, and Tanr\'e also introduce \emph{pushforward perversities} in \cite[Section 6]{CST-inv}. More specifically, they establish conditions under which a perversity can be pushed forward from a stratification of a CS set to its intrinsic stratification and for which the corresponding intersection homology groups are isomorphic.
We first generalize  pushforwards and place them in our context:

\begin{definition}
We will say that the ts-perversity $\vec p$ on $X$ \emph{can be pushed} to $\X$ if $\vec p(Y)=\vec p(Z)$ for the two singular strata $Y$ and $Z$ of $X$ whenever \emph{all} the following conditions hold:
\begin{enumerate}
\item $Y^\nu=Z^\nu$,
\item $Y^\nu$ is singular in $\X$, and

\item $\dim(Y)=\dim(Z)=\dim(Y^{\nu})$.
\end{enumerate}

If this property holds then we define the \emph{pushforward} $\nu_*\vec p$ by $\nu_*\vec p(\bS)=\vec p(S)$ if $S$ is a stratum of $X$ such that $\bS=S^\nu$ and $\dim(S)=\dim(\bS)$. Every singular stratum of $\X$ must contain such a stratum $S$, so $\nu_*\vec p$ is well defined and without any ambiguity due to our assumptions. 
\end{definition}

\begin{remark}\label{R: pushforward abuse}
If $\vec p$ is a ts-perversity that depends only on codimension, i.e.\ $\vec p(S)=\vec p(T)$ whenever $\codim(S)=\codim(T)$, then $\vec p$ can be pushed forward to any coarsening. In this case we may abuse notation and also write $\nu_*\vec p$ simply as $\vec p$. As a further abuse, we can also treat $\Z_{\geq 1}$ as the domain of $\vec p$ writing $\vec p(\codim(S))=\vec p(S)$.
\end{remark}

Unfortunately, in contrast to Theorem \ref{T: pullback}, a perversity and its pushforward are not necessarily $\mc E$-compatible, even when the pushforward is defined. For example, we need only let $\X$ be a trivially filtered manifold and let $X$ be a refinement with a stratum $S$ on which $\vec p_1(S)<-1$.

Of course Theorem \ref{T: top} and the necessity condition in Section \ref{S: necessity} show that $\mc E$-compatibility is our most general criterion for topological invariance, but for comparison with earlier results, especially \cite[Theorem C]{CST-inv}, it is useful to delineate in terms of $\vec p$ exactly when $\vec p$ and $\nu_*\vec p$ will be $\mc E$-compatible. Inspection of the definitions yields the following (cf.\ the definition of $K^*$-perversities in \cite[Definition 6.8]{CST-inv}):

\begin{proposition}
Let $\nu:X\to \X$ be a coarsening map, and suppose $\X$ is adapted to the ts-coefficient system $\mc E$. Suppose $\vec p$ is a perversity on $X$ that can be pushed to $\X$. For any singular stratum $S\subset X$, let $\td S$ denote\footnote{These $\td S$ are called \emph{source strata} in \cite{CST-inv}. Note that the precise choice of $\td S$ does not matter in the conditions that follow as the assumption that $\vec p $ can be pushed forward assures us that any two choices would give the same perversity conditions.} any stratum of $X$ such that $(\td S)^\nu=S^\nu$ and $\dim(\td S)=\dim(S^{\nu})$.
Then $\vec p$ and $\nu_*\vec p$ are $\mc E$-compatible if the following conditions hold on $\vec p$:

\begin{enumerate}
\item\label{I: push1} If $\td S$ is singular then  $\vec p_1(\td S)\leq  \vec p_1(S)\leq\vec p_1(\td S)+\codim(S)-\codim(\td S)$, and furthermore

\begin{enumerate}

\item if $\vec p_1(\td S)= \vec p_1(S)$ then $\vec p_2(\td S)\subset \vec p_2(S)$, 

\item if $\vec p_1(S)=\vec p_1(\td S)+\codim(S)-\codim(\td S)$, then $\vec p_2(\td S)\supset \vec p_2(S)$.
\end{enumerate}

\item\label{I: push2}  If $\td S$ is regular then $-1\leq \vec p_1(S)\leq \codim(S)-1$, and furthermore

\begin{enumerate}

\item If $\vec p_1(S)=-1$ then $H^1(\mc E_x)=0$ and $H^0(\mc E_x)$ is $\vec p_2(S)$-torsion for all $x\in S$,

\item if $\vec p_1(S)=0$ then $H^1(\mc E_x)$ is $p_2(S)$-torsion for all $x\in S$

\item if $\vec p_1(S)= \codim(S)-2$, then $H^0(\mc E_x)$ is $p_2(S)$-torsion free for all $x\in S$,

\item if $\vec p_1(S)=\codim(S)-1$  then $H^0(\mc E_x)=0$ and $H^1(\mc E_x)$ is $\vec p_2(S)$-torsion free for all $x\in S$.
\end{enumerate}
\end{enumerate}
\end{proposition}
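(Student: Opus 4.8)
The plan is to reduce everything to Theorem \ref{T: top} by checking that the hypotheses imposed on $\vec p$ in items \ref{I: push1} and \ref{I: push2} are exactly what is needed to verify Definition \ref{D: comp perv} for the pair $(\vec p, \nu_*\vec p)$. So the proof is essentially a bookkeeping translation, and the only real content is matching up the two sets of conditions carefully, keeping track of the directions of the various inclusions and inequalities.

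First I would fix a singular stratum $S$ of $X$ and let $\bS = S^\nu$ be the stratum of $\X$ containing it; I need to verify the conditions of Definition \ref{D: comp perv} with $\vec \p = \nu_*\vec p$. There are two cases. If $\bS$ is regular, then by the definition of ``can be pushed'' the source stratum $\td S$ is regular (indeed $\td S$ can be taken to be a regular stratum of $X$ sitting inside $\bS$), and item \ref{I: push2} is literally Definition \ref{D: comp perv}\ref{I: sing in reg} — the four subcases (a)--(d) correspond verbatim. So this case is immediate. If $\bS$ is singular, then I need Definition \ref{D: comp perv}\ref{I: sing in sing}, namely $\vec \p_1(\bS) \leq \vec p_1(S) \leq \vec \p_1(\bS) + \codim(S) - \codim(\bS)$ together with the two torsion conditions at the extremes. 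Here the key observation is that $\nu_*\vec p(\bS) = \vec p(\td S)$ by the very definition of the pushforward (taking the source stratum $\td S$, which has $\dim(\td S) = \dim(\bS)$, hence $\codim(\td S) = \codim(\bS)$). Substituting $\vec \p(\bS) = \vec p(\td S)$ turns the required inequality into $\vec p_1(\td S) \leq \vec p_1(S) \leq \vec p_1(\td S) + \codim(S) - \codim(\td S)$, which is precisely the inequality in item \ref{I: push1}; and the torsion conditions in Definition \ref{D: comp perv}\ref{I: sing in sing}(a),(b) become, after the substitution $\vec \p_2(\bS) = \vec p_2(\td S)$, the statements that $\vec p_1(S) = \vec p_1(\td S)$ implies $\vec p_2(S) \supset \vec p_2(\td S)$ and $\vec p_1(S) = \vec p_1(\td S) + \codim(S) - \codim(\td S)$ implies $\vec p_2(S) \subset \vec p_2(\td S)$ — which are exactly \ref{I: push1}(a),(b). (One must be a little careful that the roles of ``$S$'' and ``$\bS$'' in Definition \ref{D: comp perv} are played here by ``$S$'' and ``$\td S$'': the coarsening conditions in \ref{D: comp perv} are stated for $S$ a stratum of the refinement contained in $\bS$ a stratum of the coarsening, and $\codim(\bS) = \codim(\td S)$ since $\td S$ is a full-dimensional stratum of $\bS$.)

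Having checked both cases, $\mc E$-compatibility of $\vec p$ and $\nu_*\vec p$ follows, so Theorem \ref{T: top} applies and gives the quasi-isomorphism of the corresponding ts-Deligne sheaves; this is in fact the conclusion the proposition is aiming at, modulo the implicit assertion that the hypotheses guarantee $\mc E$-compatibility, which is what the statement literally asserts. The only step requiring any care is the translation just described — specifically remembering that passing from Definition \ref{D: comp perv} to the pushforward setting reverses the containments $\vec p_2(S) \supset \vec \p_2(\bS)$ versus the ``$\vec p_2(\td S) \subset \vec p_2(S)$'' of \ref{I: push1}(a) (these say the same thing once one identifies $\vec \p_2(\bS)$ with $\vec p_2(\td S)$, but it is easy to get the direction wrong), and likewise that well-definedness of the source-stratum conditions rests on the already-assumed ``can be pushed'' hypothesis, so the conditions in \ref{I: push1}--\ref{I: push2} do not depend on the choice of $\td S$. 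I do not anticipate any genuine obstacle; this proposition is a corollary of Theorem \ref{T: top} packaged for comparison with \cite[Theorem C]{CST-inv} and \cite[Definition 6.8]{CST-inv}, and its proof is simply ``inspection of the definitions,'' as the text preceding the statement already indicates.
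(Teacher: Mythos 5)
Your proposal is correct and matches the paper's treatment: the paper gives no written proof beyond "inspection of the definitions yields the following," and your verification is exactly that inspection, substituting $\nu_*\vec p(\bS)=\vec p(\td S)$ and $\codim(\td S)=\codim(\bS)$ into Definition \ref{D: comp perv} and noting the containments translate correctly in both extreme cases.
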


\begin{remark}
Condition \ref{I: push1} on $\vec p_1$ is essentially Condition B of \cite[Definition 6.8]{CST-inv}, while Condition \ref{I: push2} on $\vec p_1$ corresponds there to Conditions A and D. Condition C of \cite{CST-inv} is built into our assumption that $\vec p$ can be pushed forward. Note, however, that in \cite{CST-inv} only pushforwards to the intrinsic stratification are considered. 
\end{remark}

\begin{remark}
If $0\leq \vec p_1(S)\leq \codim(S)-2$ for all $S$ and $\mc E$ is a globally defined local system of free modules in degree $0$, then all of condition \ref{I: sing in reg} holds automatically.
\end{remark}

\subsection{Constrained perversities}\label{S: constrained}

So far we have considered only the situation in which we have two stratifications, $X$ and $\X$, one refining the other. In this case we have a point of comparison between any two ts-perversities $\vec p$ and $\vec \p$ on these stratifications since any stratum $S$ of $X$ is contained in a stratum $\bS$ of $\X$, allowing us to compare $\vec p(S)$ with $\vec \p(\bS)$. 
If we are given instead two arbitrary stratifications, then our best hope for relating them seems to be if we can find a common refinement or coarsening, in which case we can perhaps apply Theorem \ref{T: top} twice, connecting each given stratification with our common intermediary. More generally, we can attempt to compare \emph{all} stratifications of a given space by finding a universal common refinement or coarsening. We will see that such common refinements do not always exist, even for just two stratifications (Remark \ref{R: unrefined}), but intrinsic common coarsenings do exist, as we'll show in Section \ref{S: intrinsic}.

Of course we also need to be able to assign sufficiently compatible perversities to all these stratifications. The simplest way to proceed when faced with all possible stratifications seems to be to revert more closely to the original definition of Goresky and MacPherson \cite{GM1} in which perversities were assumed to be functions only of codimension. This allows one to define perversities without any reference to a specific stratification: the perversity simply assigns the same predetermined value to all strata of the same codimension. In such a setting, Goresky and MacPherson proved a topological invariance statement of the form, ``if a perversity satisfies certain conditions then all stratifications (without codimension one strata) of the same space with that perversity and a fixed coefficient system yield quasi-isomorphic Deligne sheaves''  \cite[Theorem 4.1]{GM2}. This theorem was slightly refined by Borel \cite[Section V.4]{Bo}. In both cases, the proofs utilize common coarsenings, with the assumptions about the perversities being strong enough to imply (in our language) $\mc E$-compatibility among the manifestations of the ``same'' perversity on all stratifications.

Analogously, our goal in this section is to construct ts-perversities that depend only on codimension and such that all stratifications yield quasi-isomorphic ts-Deligne sheaves from these ts-perversities. Of course we must also take into account adaptability to coefficient systems and place certain limitations on the behavior of codimension one strata. The result will be a sequence of theorems each with a pair of results. One of each pair places stricter conditions on the perversities but allows for any stratifications up to some limitations on compatibility of codimension one strata; the other removes some of the restrictions on the perversities but forces us to fix the regular strata of the stratifications. 

We start with the following definitions:

\begin{definition}\label{D: constrained}
We call a ts-perversity $\vec p$ \emph{constrained} if it satisfies all of the following:

\begin{enumerate}
\item $\vec p$ depends only on the codimension of strata (so we can write $\vec p=(\vec p_1,\vec p_2):\Z_{\geq 1}\to \Z\times 2^{\primeset{P}(R)}$),

\item\label{I: growth} $\vec p_1$ satisfies the Goresky-MacPherson growth condition  $\vec p_1(k)\leq \vec p_1(k+1)\leq \vec p_1(k)+1$ for  $k\geq 1$,

\item\label{I: p2 is 0} $\vec p_1(2)\in\{-1,0,1\}$,

\item\label{I: flat} if  $\vec p_1(k+1)=\vec p_1(k)$ then $\vec p_2(k+1)\supset \vec p_2(k)$,

\item\label{I: top} if $\vec p_1(k+1)=\vec p_1(k)+1$ then $\vec p_2(k+1)\subset \vec p_2(k)$. 

\end{enumerate}

If $\vec p$ satisfies all these conditions except \eqref{I: p2 is 0} then we say that $\vec p$ is \emph{weakly constrained}. If $\vec p_1(2)=0$, we say that $\vec p$ is \emph{strongly constrained}.

\end{definition}

When $\vec p$ is constrained, it will also often be useful for us to consider ts-coefficient systems $\mc E$ that are ts-perverse with respect to $\vec p(2)$, in other words $\mc E\in {}^{\vec p(2)}D^\heartsuit(\text{Dom}(\mc E))$ using the t-structure of \cite[Remark 5.4]{GBF32} with the ts-perversity there taking the constant value $\vec p(2)$ on the domain of $\mc E$. Explicitly this means the following:

\begin{enumerate}

\item if $\vec p_1(2)=-1$ then $H^1(\mc E_x)=0$ and $H^0(\mc E_x)$ is $\vec p_2(2)$-torsion for all $x\in \text{Dom}(\mc E)$,

\item if $\vec p_1(2)=0$ then $H^1(\mc E_x)$ is $\vec p_2(2)$-torsion and  $H^0(\mc E_x)$ is $\vec p_2(2)$-torsion free for all $x\in \text{Dom}(\mc E)$,

\item if $\vec p_1(2)=1$  then $H^0(\mc E_x)=0$ and $H^1(\mc E_x)$ is $\vec p_2(2)$-torsion free for all $x\in \text{Dom}(\mc E)$.
\end{enumerate}

\begin{remark}
If $\vec p_1(2)=0$ and $\mc E$ is a globally defined local system of free modules in degree $0$, then these conditions hold automatically.
\end{remark}

\begin{remark}
Note that there are a few conditions specific to codimension $2$, even though  $\vec p$ is also defined for codimension one. This is because these conditions will govern what happens when a singular stratum of $X$ is contained in a regular stratum of $\X$, but we know that for topological invariance we must preclude this for codimension one strata of $X$ anyway and so these conditions only come into play starting at codimension $2$. 
\end{remark}

\begin{remark}
We will not utilize strongly constrained ts-perversities until Section \ref{S: dimension}, where they will be convenient.
\end{remark}

We begin by considering when a constrained $\vec p$ is $\mc E$-compatible with itself across two stratifications. 

\begin{proposition}\label{P: constrained}
Suppose $\X$ is a coarsening of $X$ and that $\X$ is adapted to a ts-coefficient system $\mc E$. 
 Suppose either
\begin{enumerate}
\item no codimension one stratum of $X$ is contained in a regular stratum of $\X$,  $\vec p$ is constrained, and $\mc E\in {}^{\vec p(2)}D^\heartsuit(\text{Dom}(\mc E))$, or

\item $\X^{n-1}=X^{n-1}$ and $\vec p$ is weakly constrained.
\end{enumerate}

 Then $\vec p$ on $X$ and $\vec p$ on $\X$ are $\mc E$-compatible and so the ts-perversity $\vec p$ ts-Deligne sheaves $\mc P^*$ on $X$  and $\PP^*$ on $\X$ are quasi-isomorphic.
\end{proposition}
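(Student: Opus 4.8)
The plan is to show that under either set of hypotheses the ts-perversity $\vec p$ regarded on $X$ and the ts-perversity $\vec p$ regarded on $\X$ (both make sense since $\vec p$ depends only on codimension; cf.\ Remark \ref{R: pushforward abuse}) are $\mc E$-compatible in the sense of Definition \ref{D: comp perv}, and then to quote Theorem \ref{T: top}. So one fixes a singular stratum $S$ of $X$ contained in a stratum $\bS$ of $\X$ and writes $k=\codim(S)$, $\ell=\codim(\bS)$, so $k\ge \ell$ and $\vec p_i(S)=\vec p_i(k)$, $\vec p_i(\bS)=\vec p_i(\ell)$ for $i=1,2$. There are then the two cases of Definition \ref{D: comp perv}.

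\emph{Case $\bS$ singular} ($\ell\ge 1$). The inequality $\vec p_1(\ell)\le \vec p_1(k)\le \vec p_1(\ell)+(k-\ell)$ demanded by Definition \ref{D: comp perv}\eqref{I: sing in sing} is exactly the Goresky--MacPherson growth condition of Definition \ref{D: constrained}\eqref{I: growth} iterated from $\ell$ to $k$. If $\vec p_1(k)=\vec p_1(\ell)$, then monotonicity forces $\vec p_1$ to be constant on $\{\ell,\dots,k\}$, so iterating Definition \ref{D: constrained}\eqref{I: flat} yields $\vec p_2(k)\supset \vec p_2(\ell)$; if $\vec p_1(k)=\vec p_1(\ell)+(k-\ell)$, then $\vec p_1$ increases by one at each step and iterating Definition \ref{D: constrained}\eqref{I: top} yields $\vec p_2(k)\subset \vec p_2(\ell)$. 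This is precisely the rest of \eqref{I: sing in sing}, and it uses only the weakly constrained hypotheses, which are common to both (1) and (2).

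\emph{Case $\bS$ regular}. Under hypothesis (2) this case is vacuous: a regular stratum of $\X$ lies in $\X-\X^{n-1}=X-X^{n-1}$ and so cannot contain a singular stratum of $X$. Under hypothesis (1) the assumption rules out $k=1$, so $k\ge 2$, and combining $\vec p_1(2)\in\{-1,0,1\}$ with monotonicity and the growth condition gives $-1\le \vec p_1(2)\le \vec p_1(k)\le \vec p_1(2)+(k-2)\le k-1$, which is the inequality required by \eqref{I: sing in reg}. It remains to check the four torsion conditions at the extreme values of $\vec p_1(k)$. In each case one first notes that the extreme value pins down $\vec p_1(2)$ (for instance $\vec p_1(k)=-1$ forces $\vec p_1(2)=-1$, and $\vec p_1(k)=k-1$ forces $\vec p_1(2)=1$), then reads off the corresponding stalk condition on $\mc E$ from the adaptedness hypothesis (valid at $x$ since $x\in S\subset\bS$ lies in the domain of $\mc E$ as $\X$ is adapted to $\mc E$), and finally transports it from codimension $2$ to codimension $k$: when $\vec p_1$ is flat on $\{2,\dots,k\}$ one uses \eqref{I: flat} to enlarge the prime set (so a $\vec p_2(2)$-torsion module is $\vec p_2(k)$-torsion), and when $\vec p_1$ is as steep as possible on $\{2,\dots,k\}$ one uses \eqref{I: top} to shrink it (so a $\vec p_2(2)$-torsion-free module is $\vec p_2(k)$-torsion-free). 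In the remaining sub-cases the relevant stalk cohomology group of $\mc E$ already vanishes, and the zero module is $\wp$-torsion and $\wp$-torsion-free for every $\wp$.

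Having established that $\vec p$ on $X$ and $\vec p$ on $\X$ are $\mc E$-compatible, Theorem \ref{T: top} gives $\mc P^*\cong\PP^*$. The only substantive work is the bookkeeping in the singular-in-regular case under hypothesis (1): correctly matching each extreme value of $\vec p_1(k)$ with the adaptedness condition at codimension $2$ and invoking the flat/steep dichotomy of \eqref{I: flat}--\eqref{I: top} to move the (co)torsion information up to codimension $k$. I expect that to be the main obstacle to a clean write-up; everything else is a direct unwinding of the growth condition.
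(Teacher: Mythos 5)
Your proposal is correct and follows essentially the same route as the paper's proof: verify the $\mc E$-compatibility conditions of Definition \ref{D: comp perv} for $\vec p$ with itself, handling the singular-in-singular case by iterating the growth and flat/steep conditions, noting the singular-in-regular case is vacuous under hypothesis (2), and in case (1) running the same four-way case analysis at the extreme values of $\vec p_1$ (pinning down or bounding $\vec p_1(2)$, invoking adaptedness, and transporting torsion data via Conditions \ref{I: flat}/\ref{I: top} or vanishing), before concluding with Theorem \ref{T: top}.
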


\begin{remark}\label{R: dabble}
The dichotomy here is a reflection of that of Remark \ref{R: dichot}. If we want to allow singular strata of $X$ in regular strata of $\X$ then we need an assumption that will force the perversities on such strata into the absolute bounds $-1\leq \vec p_1(S)\leq \codim (S)-1$ of Definition \ref{D: comp perv}. The requirement $\vec p_1(2)\in\{-1,0,1\}$, together with the Goresky-MacPherson growth condition (which is in general necessary anyway for the singular-stratum-in-singular-stratum cases --- see Section \ref{S: necessity}) accomplishes this and is the weakest possible such requirement once we have eliminated codimension one strata of $X$ in regular strata of $\X$ (see Remark \ref{R: dichot}). If we wish to dispense with this additional constraint on $\vec p_1(2)$ then  we can instead ask that no singular stratum of $X$ be contained in a regular one of $\X$, leading to the second alternative of the proposition.

Of course we could also dabble in many more specific cases --- for example we might allow $\vec p(2)=-2$ and $\vec p(3)=-1$ but then simply forbid codimension $2$ strata of $X$ from appearing in regular strata of $\X$ (plus conditions involving $\mc E$).
Such scenarios are in any case still captured by Theorem \ref{T: top}, so we leave the reader to formulate his or her own variants and instead consider just these two broad situations that are closest in keeping to the previous theorems of \cite{GM2,GBF11,CST-inv}.
\end{remark}

\begin{proof}[Proof of Proposition \ref{P: constrained}]
In each case we check the conditions of Definition \ref{D: comp perv}. As noted in Remark \ref{R: pushforward abuse}, in this case we simply write $\vec p$ for both perversities, as well as writing $\vec p(\codim(S))=\vec p(S)$ when convenient.

Assuming the first hypotheses, suppose $S$ is a singular stratum of $X$ in the singular stratum $\bS$ of $\X$. Then $\codim(\bS)\leq\codim(S)$ so the first condition of Definition \ref{D: comp perv} follows by iterating Condition \ref{I: growth} of Definition \ref{D: constrained}. Furthermore, since $\vec p_1$ is non-decreasing, $\vec p_1(S)=\vec p_1(\bS)$ only if $\vec p_1(k)$ is constant from $\codim(\bS)$ to $\codim(S)$, in which case Condition \ref{I: flat} of Definition \ref{D: constrained} inductively implies that $\vec p_2(S)\supset \vec p_2(\bS)$ as needed. Similarly, if $\vec p_1(S)=\vec p_1(\bS)+\codim(S)-\codim(\bS)$ then $\vec p_1(k+1)=\vec p_1(k)+1$ for $\codim (\bS)\leq k <\codim(S)$ and so Condition \ref{I: top} of Definition \ref{D: constrained} implies  $\vec p_2(S)\subset \vec p_2(\bS)$. 

Now suppose $\bS$ is regular. By assumption $\codim(S)\geq 2$ and so the Goresky-MacPherson growth condition on $\vec p_1$, together with $\vec p_1(2)\in \{-1,0,1\}$, ensures that $-1\leq \vec p_1(S)\leq \codim(S)-1$.

\begin{itemize}
\item If $\vec p_1(S)=-1$, the growth condition implies that $\vec p_1(k)=-1$ for $2\leq k\leq \codim(S)$. Since $\mc E\in {}^{\vec p(2)}D^\heartsuit(\text{Dom}(\mc E))$, we have that $H^1(\mc E_x)=0$ and $H^0(\mc E_x)$ is $\vec p_2(2)$-torsion for all $x$ in the domain of $\mc E$, so in particular for $x\in S$. Now Condition \ref{I: flat} of Definition \ref{D: constrained} implies that $\vec p_2(S)\supset \vec p_2(2)$, so $H^0(\mc E_x)$ is also $\vec p_2(S)$-torsion for all $x\in S$.

\item If $\vec p_1(S)=\codim(S)-1$, the growth condition implies that $\vec p_1(k)=k-1$ for $2\leq k\leq \codim(S)$ and in particular that $\vec p_1(2)=1$. Since $\mc E\in {}^{\vec p(2)}D^\heartsuit(\text{Dom}(\mc E))$, we have $H^0(\mc E_x)=0$ and $H^1(\mc E_x)$ is $\vec p_2(2)$-torsion free for all $x$ in the domain of $\mc E$, so in particular for $x\in S$. Condition \ref{I: top} of Definition \ref{D: constrained} implies that $\vec p_2(S)\subset \vec p_2(2)$, so $H^1(\mc E_x)$ is also $\vec p_2(S)$-torsion free for all $x\in S$. 

\item If $\vec p_1(S)=0$ the growth condition implies that $\vec p_1(2)\in \{-1,0\}$. If $\vec p_1(2)=-1$ then since $\mc E\in {}^{\vec p(2)}D^\heartsuit(\text{Dom}(\mc E))$, we have $H^1(\mc E_x)=0$ for all $x$ in the domain of $\mc E$, in which case certainly $H^1(\mc E_x)$ is $\vec p_2(S)$-torsion for all $x\in S$. If $\vec p_1(2)=0$ the growth condition implies that $\vec p_1(k)=0$ for $2\leq k\leq \codim(S)$. Also since $\mc E\in {}^{\vec p(2)}D^\heartsuit(\text{Dom}(\mc E))$,  $H^1(\mc E_x)$ is $\vec p_2(2)$-torsion. Condition \ref{I: flat} of Definition \ref{D: constrained} implies that $\vec p_2(S)\supset \vec p_2(2)$, so $H^1(\mc E_x)$ is also $\vec p_2(S)$-torsion for all $x\in S$.

\item If $\vec p_1(S)=\codim(S)-2$ the growth condition implies that $\vec p_1(2)\in \{0,1\}$. If $\vec p_1(2)=1$ then since $\mc E\in {}^{\vec p(2)}D^\heartsuit(\text{Dom}(\mc E))$, we have $H^0(\mc E_x)=0$ for all $x$ in the domain of $\mc E$, in which case certainly $H^0(\mc E_x)$ is $\vec p_2(S)$-torsion free for all $x\in S$. If $\vec p_1(2)=0$ the growth condition implies that $\vec p_1(k)=k-2$ for $2\leq k\leq \codim(S)$. Since $\mc E\in {}^{\vec p(2)}D^\heartsuit(\text{Dom}(\mc E))$,  $H^0(\mc E_x)$ is $\vec p_2(2)$-torsion free. Condition \ref{I: top} of Definition \ref{D: constrained} implies that $\vec p_2(S)\subset \vec p_2(2)$, so $H^0(\mc E_x)$ is also $\vec p_2(S)$-torsion free for all $x\in S$.

\end{itemize}

In the second situation of the lemma, no singular stratum of $X$ is contained in a regular stratum of $\X$. The argument is therefore exactly as above except that we don't need to verify any of Condition \ref{I: sing in reg} of Definition \ref{D: comp perv}. Therefore, we don't need Condition \ref{I: p2 is 0} of Definition \ref{D: constrained} nor the assumption that $\mc E\in {}^{\vec p(2)}D^\heartsuit(\text{Dom}(\mc E))$.

The final statement of the proposition follows from Theorem \ref{T: top}.
\end{proof}

\begin{corollary}\label{C: common coarse}
Suppose that $\mc X$ and $\mc Y$ are \emph{any} two CS set stratifications of the same space, both of which are adapted to the ts-coefficient system $\mc E$. Suppose either 

\begin{enumerate}
\item $\vec p$ is a constrained ts-perversity, $\mc E\in {}^{\vec p(2)}D^\heartsuit(\text{Dom}(\mc E))$, and $\mc X$ and $\mc Y$ possess a common coarsening $\mc Z$ that is adapted to $\mc E$ such that no codimension one stratum of $\mc X$ or $\mc Y$ is contained in a regular stratum of $\mc Z$, or 

\item $\vec p$ is a weakly constrained ts-perversity and $\mc X$ and $\mc Y$ possess a common coarsening $\mc Z$ that is adapted to $\mc E$ and such that $\mc X^{n-1}=\mc Z^{n-1}=\mc Y^{n-1}$. 

\end{enumerate}
Then $\mc X$ and $\mc Y$ have quasi-isomorphic ts-perversity $\vec p$ ts-Deligne sheaves, i.e.\ $\mc P^*_{\mc X,\vec p,\mc E}\cong\mc P^*_{\mc Y,\vec p,\mc E}$. 
\end{corollary}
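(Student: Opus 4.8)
\textbf{Proof proposal for Corollary \ref{C: common coarse}.} The plan is to reduce the statement about two unrelated stratifications $X$ and $\mc X$ to two applications of Proposition \ref{P: constrained}, using the common coarsening $\mf X$ as the bridge. Concretely, I would first observe that in either of the two listed hypotheses, both of the pairs $(X,\mf X)$ and $(\mc X,\mf X)$ satisfy the hypotheses of Proposition \ref{P: constrained}: in case (1) the perversity $\vec p$ is constrained and adapted to $\mc E$, and by assumption no codimension one stratum of $X$ (respectively $\mc X$) lies in a regular stratum of $\mf X$, so alternative (1) of Proposition \ref{P: constrained} applies to each pair; in case (2) the perversity $\vec p$ is weakly constrained and $X^{n-1}=\mf X^{n-1}$ and $\mc X^{n-1}=\mf X^{n-1}$, so alternative (2) of Proposition \ref{P: constrained} applies to each pair.

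With that in hand, the argument is just transitivity of quasi-isomorphism. Since $\vec p$ depends only on codimension, it pushes forward (in the sense of Remark \ref{R: pushforward abuse}) to $\mf X$, and we continue to write $\vec p$ for the pushforward; this is exactly the common perversity on $\mf X$ needed so that Proposition \ref{P: constrained} can be invoked with the "same" $\vec p$ on both stratifications of each pair. Applying Proposition \ref{P: constrained} to the pair $(X,\mf X)$ gives $\mc P^*_{X,\vec p,\mc E}\cong \mc P^*_{\mf X,\vec p,\mc E}$, and applying it to the pair $(\mc X,\mf X)$ gives $\mc P^*_{\mc X,\vec p,\mc E}\cong \mc P^*_{\mf X,\vec p,\mc E}$. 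Composing the first quasi-isomorphism with the inverse of the second in the derived category yields $\mc P^*_{X,\vec p,\mc E}\cong \mc P^*_{\mc X,\vec p,\mc E}$, which is the claim.

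The only points requiring a little care — and hence the main (quite mild) obstacle — are the bookkeeping checks that the hypotheses really do transfer to each pair. In case (1) one must note that the hypothesis "$\mf X$ is adapted to $\mc E$" is given outright, and that the codimension-one condition in Proposition \ref{P: constrained}(1) is precisely the stated hypothesis restricted to each of $X$ and $\mc X$ separately; since $X$ refines $\mf X$ and $\mc X$ refines $\mf X$ (by definition of common coarsening), $X$ and $\mc X$ are themselves adapted to $\mc E$, which is also assumed. In case (2) one similarly checks that $X^{n-1}=\mf X^{n-1}$ and $\mc X^{n-1}=\mf X^{n-1}$ are both contained in the single chain of equalities assumed. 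No new ideas beyond Proposition \ref{P: constrained} and Remark \ref{R: pushforward abuse} are needed; this corollary is purely a transitivity statement, and its proof is correspondingly short.
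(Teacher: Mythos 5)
Your argument is correct and is exactly the paper's proof: the paper simply applies Proposition \ref{P: constrained} twice, once to the pair $(X,\mf X)$ and once to $(\mc X,\mf X)$, and concludes by transitivity of quasi-isomorphism. Your additional bookkeeping remarks (the perversity depending only on codimension, the hypotheses restricting correctly to each pair) are accurate but are implicit in the paper's statement of Proposition \ref{P: constrained}.
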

\begin{proof}
For each case apply Proposition \ref{P: constrained} twice.
\end{proof}

The preceding corollary shows that we can compare Deligne sheaves from different stratifications provided appropriate common coarsenings exist. With minor hypotheses, such common coarsenings are constructed in Section \ref{S: intrinsic} with the following result. Recall the definitions of maximal ts-coefficient systems from Definition \ref{D: maximal} and of ``fully adapted'' from Definition \ref{D: fully}.

\begin{theorem}\label{T: constrained}
Suppose that  $\mc X$ and $\mc Y$ are any two CS set stratifications of the same space, both fully adapted to the maximal ts-coefficient system $\mc E$. Suppose either

\begin{enumerate}

\item $\vec p$ is a constrained ts-perversity, $\mc E\in {}^{\vec p(2)}D^\heartsuit(\text{Dom}(\mc E))$,  and 
 the closure of the union of the codimension one strata of $\mc X$ (which may be empty)
is equal to the closure of the union of the codimension one strata of $\mc Y$, or

\item $\vec p$ is a weakly constrained ts-perversity and $\mc X^{n-1}=\mc Y^{n-1}$.

\end{enumerate}
Then $\mc X$ and $\mc Y$ have quasi-isomorphic ts-perversity $\vec p$ ts-Deligne sheaves, i.e.\ $\mc P^*_{\mc X,\vec p,\mc E}\cong\mc P^*_{\mc Y,\vec p,\mc E}$.  
\end{theorem}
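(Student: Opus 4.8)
\textbf{Proof plan for Theorem \ref{T: constrained}.}
The strategy is to reduce Theorem \ref{T: constrained} to Corollary \ref{C: common coarse} by producing, in each of the two cases, a common coarsening $\mf X$ of $X$ and $\mc X$ with the properties demanded by the corollary. All the real work is in constructing $\mf X$ with good control on its codimension one strata and on adaptedness to $\mc E$; this is precisely what the relative intrinsic stratifications of Section \ref{S: intrinsic} are designed to provide, so the bulk of the argument is a matter of quoting that construction and checking its output against the hypotheses of Corollary \ref{C: common coarse}. First I would recall from Section \ref{S: intrinsic} the intrinsic coarsening $\mf X$ of a given CS set relative to a fixed maximal ts-coefficient system $\mc E$ and a fixed closed subset to be respected; the point is that $\mf X$ is itself a CS set, it coarsens the given stratification, it remains (fully) adapted to $\mc E$, and it is the coarsest such stratification, so that it depends only on the underlying space together with the retained data, not on the original stratification.

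For case (1), let $Z$ be the common value of the closure of the union of the codimension one strata of $X$ and of $\mc X$, and let $\mf X$ be the intrinsic coarsening of the underlying space relative to $\mc E$ and to the closed set $Z$, i.e.\ the coarsest CS set stratification that is fully adapted to $\mc E$ and in which $Z$ is a union of strata. Since both $X$ and $\mc X$ are fully adapted to $\mc E$ and have $Z$ as a union of strata (as $Z$ is closed and is, in each, a union of codimension one strata together with lower skeleta), $\mf X$ coarsens both $X$ and $\mc X$. It remains to see that no codimension one stratum of $X$ or of $\mc X$ lies in a regular stratum of $\mf X$: a codimension one stratum of $X$ is contained in $Z\subset \mf X^{n-1}$, hence cannot lie in a regular stratum of $\mf X$; likewise for $\mc X$. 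Thus the hypotheses of Corollary \ref{C: common coarse}(1) are met with this $\mf X$, and we conclude $\mc P^*_{X,\vec p,\mc E}\cong \mc P^*_{\mf X,\vec p,\mc E}\cong \mc P^*_{\mc X,\vec p,\mc E}$.

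For case (2), the retained closed set should instead be $X^{n-1}=\mc X^{n-1}$ itself: let $\mf X$ be the intrinsic coarsening of the space relative to $\mc E$ and to the closed set $W:=X^{n-1}=\mc X^{n-1}$. Because $W$ is a union of strata in both $X$ and $\mc X$ and both are fully adapted to $\mc E$, $\mf X$ coarsens both. By construction $\mf X^{n-1}\supseteq W$; on the other hand the regular strata of $\mf X$ are exactly the components of the complement of $W$ (since $\mf X$ retains $W$ and each point off $W$ has a Euclidean neighborhood on which $\mc E$ is a local system, so it lies in a top-dimensional stratum of $\mf X$), giving $\mf X^{n-1}=W$. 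Hence $X^{n-1}=\mf X^{n-1}=\mc X^{n-1}$ and Corollary \ref{C: common coarse}(2) applies, again yielding $\mc P^*_{X,\vec p,\mc E}\cong \mc P^*_{\mf X,\vec p,\mc E}\cong \mc P^*_{\mc X,\vec p,\mc E}$. The main obstacle, and the reason the supporting construction is deferred to Section \ref{S: intrinsic}, is establishing that the relative intrinsic coarsening is genuinely a CS set and is genuinely coarser than any fully adapted refinement that retains the prescribed closed set — i.e.\ that the local cone structure of the original CS set descends to the intrinsic stratification, in the spirit of \cite{Ki} and \cite{HS91} but now with the added bookkeeping of the coefficient system $\mc E$ and of the retained subspace; granting that, the deduction of Theorem \ref{T: constrained} above is essentially formal.
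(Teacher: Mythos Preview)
Your proposal is correct and follows essentially the same route as the paper: in each case you invoke the relative intrinsic stratification $\mf X_{\mc E,C}$ of Section \ref{S: intrinsic} (with $C=Z$ the common closure of the codimension one loci in case (1), and $C=W=X^{n-1}=\mc X^{n-1}$ in case (2)) to obtain a common coarsening satisfying the hypotheses of Corollary \ref{C: common coarse}, and then apply that corollary twice. The paper's proof is exactly this, citing the specific items of Proposition \ref{P: intrinsic} (in particular Property \ref{I: int3}, which gives $\mf X-\mf X^{n-1}=U_{\mc E}-C$, to conclude $\mf X^{n-1}=W$ in case (2)); your informal verification of $\mf X^{n-1}=W$ amounts to the same computation.
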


\begin{proof}
Let $C_1$ denote the closure of the union of the codimension one strata of $\mc X$. In the first case, Proposition \ref{P: intrinsic} yields  an intrinsic stratification $\mf X_{\mc E,C_1}$ that is a common coarsening of $\mc X$ and $\mc Y$  that is adapted to $\mc E$ and such that no codimension one stratum of $\mc X$ or $\mc Y$ is contained in a regular stratum of $\mf X$ (in fact the closures of the unions of the codimension one strata of $\mc X$, $\mc Y$, and $\mf X$ are all the same set). In the second case, Proposition \ref{P: intrinsic} yields  an intrinsic stratification $\mf X_{\mc E,\mc X^{n-1}}=\mf X_{\mc E,\mc X^{n-1}}$ that is a common coarsening of $\mc X$ and $\mc Y$  that is adapted to $\mc E$ and such that $\mf X_{\mc E,\mc X^{n-1}}^{n-1}=\mc X^{n-1}=\mc Y^{n-1}$. This last statement holds because Property \ref{I: int3}
of Proposition \ref{P: intrinsic} shows that if $U_{\mc E}$ is the domain of $\mc E$ then in this case $\mf X_{\mc E,\mc X^{n-1}}-\mf X^{n-1}_{\mc E,\mc X^{n-1}}=U_{\mc E}-\mc X^{n-1}$, which must be $\mc X-\mc X^{n-1}$ as $\mc X$ is adapted to $\mc E$. Thus $\mc X$ and $\mf X_{\mc E,\mc X^{n-1}}$ have the same regular strata and hence the same $n-1$ skeleta, and similarly for $\mc Y$.

The theorem now follows from the preceding corollary.
\end{proof}

\begin{remark}
If we take $\mc E$ to be a constant sheaf concentrated in degree $0$, set $\vec p_2(k)=\emptyset$ for all $k$, and consider only pseudomanifold stratifications with no codimension one strata, we recover from the first case of the theorem the original topological invariance result of Goresky-MacPherson \cite[Theorem 4.1]{GM2}, though with a fairly different proof (see also \cite{GBF44}). Similarly, with the same assumptions on $\mc E$ and $\vec p_2$ and with $\vec p_1(2)> 0$, the second case of the theorem recovers the main result of \cite{GBF11}.
\end{remark}

\begin{remark}\label{R: uncoarsened}
In general, if we are given two CS set stratifications $\mc X$ and $\mc Y$ of the same space without any conditions on their codimension one strata there is not necessarily a common coarsening $\mf X$ such that no codimension one stratum of $\mc X$ or $\mc Y$ is contained in a regular stratum of $\mf X$. For example, just let $\mc X$ be the plane $\R^2$ with its trivial stratification and let $\mc Y$ be any other stratification with a codimension one stratum. The only common coarsening is $\mc X$ itself, and a regular stratum of $\mc X$ already contains a codimension one stratum of $\mc Y$. Even if we rule out starting with such a bad situation, consider letting $\mc X$ and $\mc Y$ be stratifications of $\R^2$, one with the $x$-axis as singular stratum and one with the $y$-axis as singular stratum. Again, there is no appropriate common coarsening.
\end{remark}

We have a similar result to Corollary \ref{C: common coarse} concerning common refinements instead of common coarsenings, though this is generally less useful as common refinements do not exist in general, even with restrictions on codimension one strata; see Remark \ref{R: unrefined}. Nonetheless, we include this corollary as it may occasionally be useful when working with spaces  for which the hypotheses of the preceding corollaries do not apply.

\begin{corollary}
Suppose that $\vec p$ is any ts-perversity depending only on codimension and that $\mc X$ and $\mc Y$ are any two CS set stratifications of the same space, both of which are adapted to the ts-coefficient system $\mc E$. If $\mc X$ and $\mc Y$ posses a common refinement $\mc Z$ such that no codimension one stratum of $\mc Z$ is contained in a regular stratum of $\mc X$ or $\mc Y$, then the ts-perversity $\vec p$ ts-Deligne sheaves on $\mc X$ and $\mc Y$ are quasi-isomorphic. 
\end{corollary}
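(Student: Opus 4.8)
The plan is to reduce this corollary to Corollary \ref{C: common coarse} by applying that earlier result twice through the common refinement, rather than through a common coarsening. The essential observation is that the roles of ``refinement'' and ``coarsening'' are symmetric in the hypotheses of Theorem \ref{T: top}: given two stratifications $X$ and $\ms X$ with $X$ coarsening $\ms X$, Theorem \ref{T: top} connects their ts-Deligne sheaves provided we have $\mc E$-compatible ts-perversities, and when $\vec p$ depends only on codimension the pullback perversity construction of Section \ref{S: pullback} (Theorem \ref{T: pullback}) supplies exactly such a compatible pair, namely $\vec p$ on $\ms X$ and the coarser perversity $\vec p$ on $X$, with the only obstruction being codimension one strata of $\ms X$ sitting inside regular strata of $X$.

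First I would set up the notation: let $\ms X$ be the given common refinement of $X$ and $\mc X$, and note that by hypothesis no codimension one stratum of $\ms X$ is contained in a regular stratum of $X$ (nor of $\mc X$), and that $\ms X$, being a refinement of $X$ which is adapted to $\mc E$, is itself adapted to $\mc E$. Next I would invoke Theorem \ref{T: pullback} (or directly Proposition \ref{P: constrained}, whichever fits more cleanly) with the coarsening map $\nu:\ms X\to X$: since $\vec p$ depends only on codimension, it can be pushed/pulled freely (Remark \ref{R: pushforward abuse}), and the hypothesis on codimension one strata of $\ms X$ guarantees that $\vec p$ on $\ms X$ and $\vec p$ on $X$ are $\mc E$-compatible, so Theorem \ref{T: top} gives $\mc P^*_{\ms X,\vec p,\mc E}\cong \mc P^*_{X,\vec p,\mc E}$. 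Then I would repeat the identical argument with the coarsening map $\ms X\to \mc X$ to get $\mc P^*_{\ms X,\vec p,\mc E}\cong \mc P^*_{\mc X,\vec p,\mc E}$. Composing the two quasi-isomorphisms yields $\mc P^*_{X,\vec p,\mc E}\cong \mc P^*_{\mc X,\vec p,\mc E}$, as desired.

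The only real subtlety — and the step I would be most careful about — is verifying that $\mc E$-compatibility of $\vec p$ (on $\ms X$) with $\vec p$ (on $X$) actually holds under the stated hypotheses, since in this corollary we do \emph{not} assume $\vec p$ is constrained, only that it depends on codimension. In the setup of Definition \ref{D: comp perv} with $X$ coarsening $\ms X$, a singular stratum $S$ of $\ms X$ contained in a singular stratum $\bS$ of $X$ has $\codim(S)\geq \codim(\bS)$ but $\vec p_1(S)=\vec p_1(\codim(S))$ and $\vec p_1(\bS)=\vec p_1(\codim(\bS))$ are the \emph{same value} only when the codimensions agree (in which case $S=\bS$ and there is nothing to check), and otherwise we need $\vec p_1(\bS)\leq \vec p_1(S)\leq \vec p_1(\bS)+\codim(S)-\codim(\bS)$ together with the torsion nesting. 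Without the Goresky-MacPherson growth condition this can fail, so strictly speaking the statement as written needs $\vec p$ to at least satisfy the growth condition; the honest fix is to observe that the pullback perversity $\nu^*_{\mc E}\vec p$ of Section \ref{S: pullback} — which agrees with $\vec p$ on preimages of singular strata and is $(0,\wp)$ over preimages of regular strata — is automatically $\mc E$-compatible with $\vec p$ by Theorem \ref{T: pullback}, provided no codimension one stratum of $\ms X$ lies in a regular stratum of $X$, and that when $\vec p$ already depends only on codimension and is such that $\ms X$ is adapted to $\mc E$ in a way making the pullback equal $\vec p$, the two constructions coincide. Thus the cleanest route is: apply Theorem \ref{T: pullback} to $\nu:\ms X\to X$ and to $\ms X\to \mc X$ to obtain $\PP^*_{\vec p}\cong \mc P^*_{\nu^*_{\mc E}\vec p}$ on both sides, and then note that under the hypothesis that $\vec p$ depends only on codimension the pullback perversity is just $\vec p$ itself, so the two quasi-isomorphisms compose to give the claim.
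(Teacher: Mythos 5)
Your overall route---two applications of Theorem \ref{T: pullback} chained through the common refinement $\ms X$---is precisely the paper's own (one-line) proof, so the approach matches. The gap is in your final gluing step. Theorem \ref{T: pullback} applied to the coarsening maps $\nu:\ms X\to X$ and $\mu:\ms X\to \mc X$ yields $\mc P^*_{\ms X,\nu^*_{\mc E}\vec p,\mc E}\cong \mc P^*_{X,\vec p,\mc E}$ and $\mc P^*_{\ms X,\mu^*_{\mc E}\vec p,\mc E}\cong \mc P^*_{\mc X,\vec p,\mc E}$, but your closing claim that ``when $\vec p$ depends only on codimension the pullback perversity is just $\vec p$ itself'' is false. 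By definition $\nu^*_{\mc E}\vec p(S)=\vec p(\codim_X(S^\nu))$ when $S^\nu$ is singular (and $(0,\wp)$ when $S^\nu$ is regular), i.e.\ $\vec p$ evaluated at the codimension of the \emph{containing} stratum of $X$, not at $\codim_{\ms X}(S)$; these differ whenever $S$ is a proper refinement of $S^\nu$. Worse, $\nu^*_{\mc E}\vec p$ and $\mu^*_{\mc E}\vec p$ are in general two \emph{different} perversities on $\ms X$, so the two quasi-isomorphisms you obtain land on two a priori non-quasi-isomorphic ts-Deligne sheaves on $\ms X$ and cannot simply be composed.

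Your earlier instinct was the right one: running the direct argument with $\vec p$ on both $\ms X$ and $X$ requires, for a singular stratum $S$ of $\ms X$ inside a singular stratum $\bS$ of $X$, the condition $\vec p_1(\codim\bS)\leq \vec p_1(\codim S)\leq \vec p_1(\codim\bS)+\codim S-\codim\bS$ of Definition \ref{D: comp perv}, a growth-type condition not implied by ``depends only on codimension.'' The pullback device does not remove this issue; it merely relocates it to the question of whether $\nu^*_{\mc E}\vec p=\mu^*_{\mc E}\vec p$, which is exactly as unavailable. To see that no patch can avoid some such hypothesis, take $|X|=\R^3$ with $X$ having singular set a line $P$ (codimension $2$), $\mc X$ having singular set a point of $P$ (codimension $3$), and common refinement $\R^3\supset P\supset\{0\}$, which has no codimension one strata; with $\vec p_1(2)=0$, $\vec p_1(3)$ large, and $\mc E$ the constant sheaf, the stalk cohomologies of the two Deligne sheaves at the origin are $R$ in degree $0$ versus $R$ in degrees $0$ and $2$, so the conclusion itself fails without a relation between $\vec p(2)$ and $\vec p(3)$. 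So your suspicion that the statement needs at least a growth-type constraint (or that matching containing strata have perversity-equal codimensions) was well founded---the paper's terse proof glosses over the same point---but the fix you propose, resting on the identification of the pullback perversity with $\vec p$, does not close the gap.
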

\begin{proof}
This follows from Theorem \ref{T: pullback} using $\mc Z$ as an intermediary. 
\end{proof}

\begin{remark}\label{R: unrefined}
Unfortunately, unlike the case of coarsenings for which any two stratifications have a common coarsening (given appropriate assumptions on coefficients and codimension one strata), common refinements do not exist in general. For example, let  $\mc X$ be $\R^n$, $n\geq 2 $, stratified as $\R^n\supset \{x_1\text{-axis}\}$, and let $\mc Y$ be $\R^n$ stratified by $\R^n\supset A$, where $A$ is the image of $f:\R\to\R^n$ given by $f(t)=(t,t^3\sin(1/t),0,\ldots,0)$ for $t\neq 0$ and $f(0)=\vec 0$. Then $A$ is a $C^1$ submanifold and so possesses a tubular neighborhood, making $\mc Y$ a CS set. But the intersection of $A$ and the $x_1$-axis is the union of the origin with the points $\left\{\frac{1}{n\pi},0,\ldots, 0\right\}$ and so any common refinement $\mc Z$ of the stratifications $\mc X$ and $\mc Y$ would require $\mc Z^0$ to have an accumulation point, which is not possible  for CS sets.  
\end{remark}

\section{Necessity of the conditions}\label{S: necessity}

In this section we consider the necessity of the  $\mc E$-compatibility conditions on ts-perversities (Definition \ref{D: comp perv}) in order for our main topological invariance result (Theorem \ref{T: top}) to hold. We will show that the ``singular-in-regular'' conditions are strictly necessary, using that the links of strata in such situations are limited to homology spheres. For the ``singular-in-singular'' conditions, there are many more possibilities and so we only have necessity in general, meaning that we will construct examples that show that failure of the conditions of Definition \ref{D: comp perv} can result in non-quasi-isomorphic ts-Deligne sheaves unless certain stalk cohomology modules (or some of their torsion submodules or torsion-free quotient modules) vanish. As noted in Remark \ref{R: efficient}, such vanishing is assured when the first component of a ts-perversity has values less than $-1$ or greater than $\codim(S) -1$, but for efficient ts-perversities (those for which $-1\leq \vec p_1(S)\leq \codim(S)-1$ for all singular strata $S$) such vanishing depends on the particulars of $X$, $\mc E$, and $\vec p$. 
 There are certainly stronger conditions that can be imposed on $(X,\X,\mc E)$ that would allow some weakening of Definition \ref{D: comp perv}. As an extreme example we could take $\mc E=0$ in which case $\mc P^*=\PP^*=0$ regardless of the stratifications or perversities. Another less trivial, though still somewhat artificial, example is noted in Remark \ref{R: dabble}. In fact all such extra conditions are likely to be somewhat artificial in our current general context.

For historical context, we recall that if $\mc E$ is concentrated in degree $0$, if $\vec p$ and $\vec \p$ depend only on codimension, and if $\vec p_2(k)=\vec \p_2(k)=\emptyset$ for all $k\geq 2$ (with no codimension one strata allowed) then $\mc P^*$ and $\PP^*$ will be the original Deligne sheaves of Goresky and MacPherson \cite{GM2}. In this case the hypercohomology groups with compact supports will be the classical intersection homology groups. If $\bar p(k)\leq k-2$ for all $k$, as in \cite{GM2}, then these will agree with the singular intersection homology groups of King \cite{Ki}, called ``GM intersection homology'' in \cite{GBF35}. For GM intersection homology, the Goresky-MacPherson growth condition $\bar p(k)\leq \bar p(k+1)\leq \bar p(k)+1$ is known to be necessary in general for topological invariance; see King \cite[Section 2]{Ki}.
The main idea is to compare $c(\Sigma X)$ with $\R\times cX$, where $cX$ is the open cone on $X$ and $\Sigma X$ is the suspension. For compact stratified $X$ these spaces are topologically homeomorphic but with different natural stratifications coming from that on $X$. Goresky and MacPherson also assume $\bar p(2)=0$ (and no codimension one strata), though King shows that this assumption is not necessary for the topological invariance of singular chain GM intersection homology, only that $\bar p(1)\geq 0$.

If $\bar p(k)> k-2$ for any $k$ the hypercohomology of the Deligne sheaf is called 
``non-GM intersection homology'' in \cite{GBF35}. Non-GM intersection homology can only be obtained using singular chains after making some additional modifications; see \cite[Chapter 6]{GBF35}. 
We showed in \cite[Section 3]{GBF11} that if $\bar p(2)>0$ then the non-GM intersection homology cannot be a topological invariant in general, but that if the Goresky-MacPherson growth condition holds then we will have invariance under refinements such that $X^{n-1}=\X^{n-1}$. The general necessity of the growth condition for non-GM intersection homology can be argued identically as in the GM case, and we will use 
essentially the same basic argument  below for the case of singular strata of $X$ in singular strata of $\X$.

In the remainder of this section, we first develop some computational machinery in Section \ref{S: tools}. We then consider the ``singular-in-regular'' situation in Section \ref{S: sing in reg} and the ``singular-in-singular'' situation in Section \ref{S: sing in sing}. Throughout we continue our notation from the preceding sections, namely $\X$ coarsens $X$ with corresponding ts-perversities $\vec \p$ and $\vec p$ and ts-Deligne sheaves $\PP^*$ and $\mc P^*$. We also assume $\X$ adapted to a ts-coefficient system $\mc E$.

\subsection{Computational tools}\label{S: tools}

We need some tools for computation. Our first lemma is standard:

\begin{lemma}\label{L: prod}
Suppose $Y$ is a filtered space, and let $X=\R^k\times Y$ have the product filtration $X^j=\R^k\times X^{j-k}$. If $\mc S^*$ is $X$-clc then $\H^i(X;\mc S^*)\cong \H^i(Y;\mc S^*|_Y)$, identifying $Y$ with any $\{z\}\times Y\subset \R^k\times Y$. 
\end{lemma}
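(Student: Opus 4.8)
The statement to prove is Lemma \ref{L: prod}: for $X=\R^k\times Y$ with product filtration and $\mc S^*$ an $X$-clc sheaf complex, $\H^i(X;\mc S^*)\cong \H^i(Y;\mc S^*|_Y)$ where $Y$ is identified with a slice $\{z\}\times Y$.

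\medskip

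The plan is to reduce to the case $k=1$ by induction and then exploit the contractibility of $\R$ together with the constructibility hypothesis. First I would set up the projection $\pi:\R^k\times Y\to Y$ and the inclusion $s_z:\{z\}\times Y\hookrightarrow \R^k\times Y$ of a slice, noting $\pi\circ s_z=\id_Y$. The goal is to show that the natural restriction map $\H^i(\R^k\times Y;\mc S^*)\to \H^i(\{z\}\times Y;\mc S^*|_Y)$ is an isomorphism. Since $\H^i(\R^k\times Y;\mc S^*)=\H^i(Y;R\pi_*\mc S^*)$, it suffices to show that the adjunction/restriction map $R\pi_*\mc S^*\to s_z^*\mc S^*\,(=\mc S^*|_Y)$ is a quasi-isomorphism on $Y$ — but more carefully, I want the composite $\H^i(Y;R\pi_*\mc S^*)\to \H^i(Y;\mc S^*|_Y)$ to be an isomorphism, which will follow if the stalk maps $(R\pi_*\mc S^*)_y\to (\mc S^*|_Y)_y$ are isomorphisms for all $y\in Y$, provided one can control the behavior of $R\pi_*$ on stalks.

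\medskip

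The key computation is local: for $y\in Y$, the stalk $(R\pi_*\mc S^*)_y$ is the direct limit of $\H^i(\pi^{-1}(V);\mc S^*)=\H^i(\R^k\times V;\mc S^*)$ over open neighborhoods $V$ of $y$ in $Y$. By the $X$-clc hypothesis and Schürmann's constructibility results (\cite[Proposition 4.0.2]{Sch03}, as invoked elsewhere in the paper, e.g.\ in the proof of Theorem \ref{T: top}), restriction of hypercohomology from $\R^k\times V$ to a smaller such product, and ultimately to the slice, is compatible with the local structure; concretely, one uses that $\mc S^*$ is cohomologically locally constant along the strata and that $\R^k$ is contractible, so $\H^i(\R^k\times V;\mc S^*)\cong \H^i(V;\mc S^*|_V)$ compatibly in $V$. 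Taking direct limits gives $(R\pi_*\mc S^*)_y\cong (\mc S^*|_Y)_y$ and identifies this isomorphism with the restriction map; then the spectral sequence / hypercohomology comparison yields the global isomorphism. Alternatively, and perhaps cleanest, I would cite \cite[Proposition 2.7.8]{KS} (used in exactly this way in the proof of Theorem \ref{T: top}) to get $\mc S^*\cong \pi^*R\pi_*\mc S^*$ directly from constructibility, and then apply $R\pi_*$ together with the projection formula and $R\pi_*\pi^*\cong \id$ (since the fibers $\R^k$ are contractible and $R$-cohomologically trivial) to conclude.

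\medskip

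The main obstacle I anticipate is being careful about what "standard" means here: one must ensure that the hypercohomology of $\R^k\times V$ really does compute that of $V$ with a genuine equivalence of functors (not just an abstract isomorphism of groups), so that the comparison with the slice inclusion is the \emph{natural} restriction map and the isomorphism is functorial enough to pass to direct limits and reassemble globally. This is where the constructibility hypothesis is essential — without $X$-clc, $R\pi_*$ need not commute with restriction to opens in the needed way, and the Künneth-type argument can fail (the projection $\R^k\times V\to V$ is not proper). Once the local statement "$\pi^*$ and $R\pi_*$ are inverse equivalences on $X$-clc complexes" is in hand via \cite[Proposition 2.7.8]{KS}, the rest is a short formal argument, so I expect the proof to be quite brief — indeed "standard," as the paper states.
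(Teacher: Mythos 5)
Your proposal is correct and, in the ``alternatively'' form you yourself prefer, it is essentially the paper's own proof: the one nontrivial input is \cite[Proposition 2.7.8]{KS}, which the $X$-clc hypothesis makes applicable (the cohomology sheaves are constant along each $\R^k\times\{y\}$) and which yields $\mc S^*\cong \pi^*R\pi_*\mc S^*$, after which everything is formal. The only cosmetic difference is the final bookkeeping: rather than invoking $R\pi_*\pi^*\cong\id$ (which by itself only returns a tautology when applied to that isomorphism), the paper applies $s^*$ and uses $\pi s=\id$ to identify $R\pi_*\mc S^*$ with $\mc S^*|_Y$, then concludes via $\H^i(X;\mc S^*)\cong\H^i(Y;R\pi_*\mc S^*)$ --- a step you would need to make explicit anyway, so the arguments coincide.
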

\begin{proof}
Let $\pi:\R^k\times Y\to Y$ be the projection and let $s:\{z\}\times Y\into \R^k\times Y$ be the inclusion. Since $\mc S^*$ is $X$-clc, in particular $H^j(\mc S^*)$ is constant along each $\R^k\times \{y\}$, and so  $\mc S^*=\pi^*R\pi_*\mc S^*$ by \cite[Proposition 2.7.8]{KS} (letting the $Y_n$ there be closed balls in $\R^k$). Then $s^*\mc S^*\cong s^*\pi^*R\pi_*\mc S^*=R\pi_*\mc S^*$, so $\H^i(\R^k\times Y;\mc S^*)\cong \H^i(Y;R\pi_*\mc S^*)\cong \H^i(Y; s^*\mc S^*)=\H^i(Y; \mc S^*|_Y)$.
\end{proof}

\begin{lemma}\label{L: ngbd}
Let $L$ be a compact filtered set such that $X=\R^k\times cL$ is a CS set. Let $x=(s,v)\in \R^k\times cL$ with $s\in \R^k$ arbitrary and $v\in cL$ the cone vertex, and let $S$ be the stratum $\R^k\times \{v\}$. If $\mc P^*$ is a ts-Deligne sheaf on $\R^k\times cL$, then identifying $L$ with some copy $(z,t,L)\subset \R^k\times (0,1)\times L\subset \R^k\times cL$, we have
\begin{equation*}
H^i(\mc P^*_x)\cong 
\begin{cases}
0,&i>\vec p_1(S)+1,\\
T^{\vec p_2(S)}\H^i(L;\mc P^*|_L),&i=\vec p_1(S)+1,\\
\H^i(L;\mc P^*|_L),&i\leq \vec p_1(S).
\end{cases}
\end{equation*}
\end{lemma}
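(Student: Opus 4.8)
The plan is to reduce the computation of $H^i(\mc P^*_x)$ to a hypercohomology computation over the cone $cL$, and then peel off the cone vertex. First I would use Lemma \ref{L: prod} (or directly \cite[Proposition 2.7.8]{KS}, as in the proof of that lemma) to strip off the Euclidean factor: writing $x=(s,v)$ and letting $\pi_2:\R^k\times cL\to cL$ be the projection, the fact that $\mc P^*$ is $(\R^k\times cL)$-clc gives $\mc P^*\cong \pi_2^*R\pi_{2*}\mc P^*$, so the stalk $\mc P^*_x$ is quasi-isomorphic to the stalk of $R\pi_{2*}\mc P^*$ at $v\in cL$. By \cite[Proposition 4.0.2.2]{Sch03} (constructibility ensures the relevant inverse systems over distinguished neighborhoods are essentially constant), this stalk is computed by $\H^i(cL; \mc P^*|_{cL})$, identifying $cL$ with a slice $\{s\}\times cL$; alternatively one identifies $H^i(\mc P^*_x)$ with $\H^i(U;\mc P^*)$ for $U$ a small distinguished neighborhood of $x$, which retracts to such a cone.

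Next I would apply axiom TAx1$(X,\vec p,\mc E)$ at the point $x$, which lies in the singular stratum $S=\R^k\times\{v\}$ (of codimension $\dim(L)+1$). The locally torsion-tipped truncation description of the ts-Deligne sheaf — specifically property (2) in the stalk formula for $\mf t^{X_{n-k}}_{\leq\vec p}$ recalled just before Definition \ref{T: Ax1'} — tells us exactly that $H^i(\mc P^*_x)$ is obtained from $H^i$ of the pushed-forward-and-truncated complex on the link region by: killing it for $i>\vec p_1(S)+1$, replacing it by its $\vec p_2(S)$-torsion submodule for $i=\vec p_1(S)+1$, and leaving it unchanged for $i\le\vec p_1(S)$. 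It therefore remains to identify $H^i$ of that complex with $\H^i(L;\mc P^*|_L)$ in the range $i\le \vec p_1(S)+1$. This is the attaching-map step: the complex in question has stalk cohomology at $x$ equal (before truncation) to $\varinjlim \H^i(U-S; \mc P^*)$ over distinguished neighborhoods, and $U-S\cong \R^k\times(cL-\{v\})\cong \R^k\times(0,1)\times L$ deformation retracts onto a copy of $L$. Since $\mc P^*$ is clc, Lemma \ref{L: prod} applied to the $\R^k\times(0,1)$ factor gives $\H^i(U-S;\mc P^*)\cong\H^i(L;\mc P^*|_L)$, compatibly with restriction to smaller neighborhoods by constructibility (\cite[Proposition 4.0.2]{Sch03}), so the direct limit is just $\H^i(L;\mc P^*|_L)$. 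Combining with the truncation description gives the three cases of the statement.

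The main obstacle I anticipate is bookkeeping rather than conceptual: one must be careful that the identifications of stalk cohomology with hypercohomology over a link are exactly the ones appearing inside the locally torsion-tipped truncation functor — i.e.\ that the ``$\mc S^*$'' in property (2) of the truncation recollection corresponds, after restriction to $U-S$, to $\mc P^*|_{U-S}$, whose hypercohomology over $U-S$ is what computes the link hypercohomology — and that all the relevant inverse/direct systems over distinguished neighborhoods are essentially constant so that the limits behave. All of these are furnished by the constructibility results of Sch\"urmann \cite[Proposition 4.0.2]{Sch03} already invoked repeatedly above (and used in the proof of Theorem \ref{T: top}), together with the product lemma, so no genuinely new input is needed; this is why I would expect the proof to be short.
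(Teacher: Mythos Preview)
Your proposal is correct and follows essentially the same route as the paper: use the torsion-tipped truncation description to obtain the three cases in terms of $H^i((Ri_*\mc P^*|_W)_x)$ (with $W=X-S$), then identify this stalk cohomology with $\H^i(L;\mc P^*|_L)$ via constructibility (\cite[Proposition 4.0.2]{Sch03}) and the product lemma. Your first paragraph --- reducing to hypercohomology over the slice $cL$ --- is a harmless but unnecessary detour, since the truncation-plus-attaching-map argument of your second paragraph already works directly at $x\in X$ without first stripping the $\R^k$ factor; the paper simply omits that step.
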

\begin{proof}
 Let $W=X-S=\R^k\times (cL-\{v\})\cong \R^{k+1}\times L$, let $i:W\into X$, and let $\mc P^*_W=\mc P^*|_W$. From the definition of the ts-Deligne sheaf and the torsion tipped truncation, we have
$$H^i(\mc P^*_x)\cong 
\begin{cases}
0,&i>\vec p_1(S)+1,\\
T^{\vec p_2(S)}H^{\vec p_1(S)+1}((Ri_*\mc P^*_W)_x),&i=\vec p_1(S)+1,\\
H^i((Ri_*\mc P^*_W)_x),&i\leq \vec p_1(S).
\end{cases}$$
Since $\mc P^*_W$ is $W$-clc,  $Ri_*\mc P^*_W$ is $X$-clc by \cite[Proposition 4.0.2.3]{Sch03}, and so $H^i((Ri_*\mc P^*_W)_x)\cong \H^i(X;Ri_*\mc P^*_W)$ by \cite[Proposition 4.0.2.2]{Sch03}. But then 
$$\H^i(X;Ri_*\mc P^*_W)\cong \H^i(W;\mc P^*_W)\cong \H^i(\R^{k+1}\times L;\mc P^*_W)\cong \H^i(L;\mc P^*|_L),$$  
using Lemma \ref{L: prod}.
\end{proof}

If $L$ is itself a CS set, then $\mc P^*|_L$ will itself be a ts-Deligne sheaf, as we show in the next lemma. To establish notation, 
suppose $X$ is a CS set and $\vec p,\mc E, \mc P^*$ are a ts-perversity, ts-coefficient system, and ts-Deligne sheaf on $X$. If $Y\subset X$ is a CS set with the induced stratification (in the sense of the statement of the lemma below) we will denote by $\vec p_Y$ the ts-perversity on $Y$ such that $\vec p_Y(S)=\vec p(Z)$ if the singular stratum $S$ of $Y$ is contained in the singular stratum $Z$ of $X$. We also write $\mc E_Y$ for the restriction of $\mc E$ to the intersection of its domain with $Y$, and we will write $\mc P^*_Y$ for the Deligne sheaf on $Y$ with respect to $\vec p_Y,\mc E_Y$. 

\begin{lemma}
Suppose that either 

\begin{enumerate}
\item $Y$ is an open subset of the CS set $X$ stratified by $Y^j=Y\cap X^j$ or 

\item  $Y$ is a CS set and $X=\R^m\times Y$ with $X^j=\R^m\times X^{j-m}$ and we identify $Y$ with $\{z\}\times Y$ for some $z\in \R^m$.
\end{enumerate}
Let $\vec p,\mc E$ be a ts-perversity and ts-coefficient system on $X$. Then $\mc P^*|_Y\cong \mc P^*_Y$, i.e.\ the restriction of the ts-Deligne sheaf on $X$ to $Y$ is quasi-isomorphic to the $\vec p_Y,\mc E_Y$ ts-Deligne sheaf on $Y$. 
\end{lemma}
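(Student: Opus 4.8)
The plan is to verify that $\mc P^*|_Y$ satisfies the axioms TAx1'$(Y,\vec p_Y,\mc E_Y)$ (equivalently TAx1), so that the uniqueness part of Theorem \ref{T: ax equiv} identifies it with $\mc P^*_Y$ up to quasi-isomorphism. In both cases (1) and (2), the inclusion $Y\into X$ is, respectively, an open inclusion or the inclusion of a slice of a product, and in either case the strata of $Y$ are exactly the connected components of $S\cap Y$ for $S$ a stratum of $X$, with $\codim_Y(S\cap Y)=\codim_X(S)$ — this is what makes $\vec p_Y$ and $\mc E_Y$ well-defined and gives the needed matching of truncation degrees. The key point is that all the sheaf operations appearing in the axioms commute appropriately with restriction to $Y$.

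First I would handle case (1): for an open inclusion $k:Y\into X$, the functor $k^*$ is exact, commutes with stalks (so $(\mc P^*|_Y)_x\cong \mc P^*_x$ for $x\in Y$), and commutes with $Rj_*$ in the relevant sense since $Y$ is open and unions of strata restrict to unions of strata; likewise $f_x^!$ is computed locally and so is unaffected by passing to the open set $Y$. Thus axioms \ref{A': bounded}, \ref{A': truncate}, and \ref{A': attach} for $\mc P^*|_Y$ follow immediately from those for $\mc P^*$, once one checks the codimensions and perversity values match, which they do by construction of $\vec p_Y$. Axiom \ref{A': coeffs} holds because $\mc P^*|_{U_1}\cong\mc E$ restricts to $\mc P^*|_{Y\cap U_1}\cong \mc E_Y$. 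Constructibility ($X$-clc, hence $Y$-clc) is preserved under $k^*$ by \cite[Proposition 4.0.2.3]{Sch03}.

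Next I would do case (2): here I would use Lemma \ref{L: prod}-style reasoning. Since $\mc P^*$ is $X$-clc and $X=\R^m\times Y$ with the product filtration, $\mc H^i(\mc P^*)$ is constant along each $\R^m\times\{y\}$, so $\mc P^*\cong \pi^*R\pi_*\mc P^*$ by \cite[Proposition 2.7.8]{KS}, where $\pi:\R^m\times Y\to Y$; restricting along the slice $s:\{z\}\times Y\into X$ gives $s^*\mc P^*\cong R\pi_*\mc P^*$, and this is $Y$-clc. For the stalk/attachment conditions, for $x\in S'\subset Y$ a singular stratum sitting inside $S=\R^m\times S'$, the stalk $(\mc P^*|_Y)_{x}\cong (R\pi_*\mc P^*)_x\cong \mc P^*_{(z,x)}$ (using that restriction to a slice of a locally constant-in-$\R^m$ complex recovers the stalk), so the truncation axiom \ref{A': truncate} for $\mc P^*$ at $(z,x)\in S$ with $\vec p(S)=\vec p_Y(S')$ gives exactly what is needed on $Y$. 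For the costalk axiom \ref{A': attach}, I would invoke the product formula for $f^!$ — $f^!_{(z,x)}\mc P^*\cong f^!_z\mc R_{\R^m}\otimes^L f^!_x(\mc P^*|_Y)$ via \cite[Remark V.10.20.c]{Bo} (as used in the proof of Theorem \ref{T: top}) — together with $f^!_z\mc R_{\R^m}\cong R[-m]$, to transfer the vanishing/torsion-freeness of $H^*(f^!_{(z,x)}\mc P^*)$ in degrees governed by $\vec p_1(S)+\codim(S)$ to the corresponding statement for $f^!_x(\mc P^*|_Y)$ in degrees governed by $\vec p_1(S')+\codim_Y(S')$, the shift by $m$ matching $\dim X-\dim Y$. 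Axioms \ref{A': bounded} and \ref{A': coeffs} are as before.

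The main obstacle I anticipate is purely bookkeeping rather than conceptual: making sure that in case (2) the degree shifts introduced by the $\R^m$-factor (the $[-m]$ in $f^!_z\mc R_{\R^m}$) line up exactly with the relation $\codim_X(\R^m\times S')=\codim_Y(S')$ and the relation between $\dim X$ and $\dim Y$ appearing in axiom \ref{A': attach}, and that the $\vec p_2$-torsion and torsion-freeness conditions are genuinely unaffected by tensoring with the free rank-one module $R[-m]$ (which they are, since $-\otimes^L R[-m]$ is just a shift). Once these indices are checked, both cases reduce to the uniqueness clause of Theorem \ref{T: ax equiv}.
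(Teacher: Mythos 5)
Your proposal is correct and follows essentially the same route as the paper: verify the axioms TAx1'$(Y,\vec p_Y,\mc E_Y)$ for $\mc P^*|_Y$, with the only nontrivial step being the costalk axiom in the product case, which the paper also handles via the same $f^!$ product formula (as in the proof of Theorem \ref{T: top}) together with $R\pi_*\mc P^*\cong \mc P^*|_Y$ from Lemma \ref{L: prod}, giving $H^i(f_x^!\mc P^*)\cong H^{i-m}(g_x^!\mc P^*|_Y)$ and the matching degree shift. No substantive differences.
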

\begin{proof}
Note that in both cases both $X$ and $Y$ are CS sets by \cite[Lemmas 2.3.13 and 2.11.4]{GBF35}. We know that ts-Deligne sheaves are characterized up to quasi-isomorphic by the axioms TAx1'. That $\mc P^*$ satisfies these axioms on $X$ implies that $\mc P^*|_Y$ satisfies the axioms on $Y$. The only axiom that is not obvious is the last axiom when $X=\R^m\times Y$. In this case suppose $X$ is $n$-dimensional so that $Y$ is $n-m$ dimensional. Let $x\in Y_{(n-m)-k}\subset X_{n-k}$. Let $f_x:\{x\}\into X$ and $g_x:\{x\}\into Y$ for $x$ in a singular stratum. 
Now, exactly as in the proof of Theorem \ref{T: top}, if $\pi:\R^m\times Y\to Y$ is the projection we have $f^!_{x}\mc P^* \cong g_x^!R\pi_*\mc P^*[-m],$ while $R\pi_*\mc P^* \cong \mc P^*|_Y$ by the proof of Lemma \ref{L: prod}. Thus 
 $$H^i(f_x^!\mc P^*)\cong H^{i-m}(g_x^!\mc P^*|_Y).$$
The axiom for $\mc P^*|_Y$ now follows from the axiom for $\mc P^*$.
\end{proof}

In what follows we shall abuse notation and write the restrictions $\vec p_Y$, $\mc E_Y$, and $\mc P^*_Y$ as simply $\vec p$, $\mc E$, and $\mc P^*$  if it is clear what is meant from context. We let $\Sigma X$ denote the (unreduced) suspension, stratified by $(\Sigma X)^i=\Sigma(X^{i-1})$ with $(\Sigma X)^0=\{\mf n,\mf s\}$, the union of the two suspension points.

\begin{proposition}\label{P: susp}
Let $X^{n-1}$ be a compact CS set with suspension $\Sigma X$. Let $\vec p$ be a $ts$-perversity on $\Sigma X$ such that $\vec p(\mf n)=\vec p(\mf s)$, and denote the common value $(p,\wp)$. Let $\mc E$ be a ts-coefficient system to which $\Sigma X$ is adapted, and  let $\mc P^*$ be the associated ts-Deligne sheaf. Then 

\begin{equation*}
\H^i(\Sigma X;\mc P^*)\cong
\begin{cases}
\H^{i-1}(X; \mc P^*), &i\geq  p+3,\\
\H^{p+1}(X;\mc P^*)/ T^{\wp}\H^{p+1}(X;\mc P^*),&i=p+2,\\
T^{\wp}\H^{p+1}(X;\mc P^*), &i=p+1,\\
\H^{i}(X; \mc P^*), &i\leq p.
\end{cases}
\end{equation*}
\end{proposition}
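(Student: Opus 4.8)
The plan is to compute the hypercohomology of $\Sigma X$ by stratifying it as $\R \times X$ near the equator and as two cones near the suspension points, using a Mayer--Vietoris argument together with the local cone formula from Lemma \ref{L: ngbd}. Concretely, cover $\Sigma X$ by two open sets: $U = \Sigma X - \{\mf s\}$, an open cone neighborhood of $\mf n$, and $V = \Sigma X - \{\mf n\}$, an open cone neighborhood of $\mf s$; then $U \cap V \cong \R \times X$ (with the product stratification, away from both cone points). Since $\mc P^*$ is a ts-Deligne sheaf, Lemma \ref{L: prod} gives $\H^i(U \cap V; \mc P^*) \cong \H^i(X; \mc P^*)$, while Lemma \ref{L: ngbd} (applied with $k = 0$, so the ``distinguished neighborhood'' is literally $cX$ and the relevant stratum is the suspension point with perversity value $(p,\wp)$) computes $\H^i(U; \mc P^*) \cong H^i(\mc P^*_{\mf n})$ and likewise for $V$: these are $\H^i(X;\mc P^*)$ for $i \le p$, equal to $T^{\wp}\H^{p+1}(X;\mc P^*)$ for $i = p+1$, and $0$ for $i > p+1$.

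The next step is to feed these into the Mayer--Vietoris long exact sequence
\begin{equation*}
\cdots \to \H^{i-1}(U\cap V;\mc P^*) \to \H^i(\Sigma X;\mc P^*) \to \H^i(U;\mc P^*)\oplus \H^i(V;\mc P^*)\to \H^i(U\cap V;\mc P^*)\to\cdots
\end{equation*}
and read off the answer degree by degree. For $i \le p$ the restriction maps $\H^i(U;\mc P^*)\oplus\H^i(V;\mc P^*)\to \H^i(U\cap V;\mc P^*)$ are each (via the cone formula and the fact that the inclusion $X \hookrightarrow cX$ induces the identification) the diagonal followed by difference, so the map is surjective with kernel the diagonal copy of $\H^i(X;\mc P^*)$, and combined with the analogous statement one degree down this forces $\H^i(\Sigma X;\mc P^*)\cong \H^i(X;\mc P^*)$. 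For $i \ge p+3$ both $\H^i(U;\mc P^*)$ and $\H^i(V;\mc P^*)$ vanish, as do the terms in degree $i$ and $i-1$ when $i-1 \ge p+3$, so $\H^i(\Sigma X;\mc P^*)\cong \H^{i-1}(U\cap V;\mc P^*)\cong \H^{i-1}(X;\mc P^*)$. The interesting range is $i = p+1$ and $i = p+2$, where one must track the map $\H^{p+1}(U;\mc P^*)\oplus\H^{p+1}(V;\mc P^*) = T^{\wp}\H^{p+1}(X;\mc P^*)^{\oplus 2} \to \H^{p+1}(U\cap V;\mc P^*) = \H^{p+1}(X;\mc P^*)$, again the difference of the two inclusions of the $\wp$-torsion submodule. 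This map has kernel the diagonal $T^{\wp}\H^{p+1}(X;\mc P^*)$, giving the $i = p+1$ line, and cokernel $\H^{p+1}(X;\mc P^*)/T^{\wp}\H^{p+1}(X;\mc P^*)$; since $\H^{p+2}(U;\mc P^*)\oplus\H^{p+2}(V;\mc P^*) = 0$, this cokernel is exactly $\H^{p+2}(\Sigma X;\mc P^*)$, giving the $i = p+2$ line.

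The main obstacle is verifying that the relevant restriction maps in Mayer--Vietoris really are the ``diagonal/difference'' maps claimed, i.e.\ that under the cone isomorphisms of Lemma \ref{L: ngbd} the map $\H^i(U;\mc P^*)\to \H^i(U\cap V;\mc P^*)$ corresponds to the natural map $\H^i(X;\mc P^*)\to \H^i(X;\mc P^*)$ (the identity, or the inclusion of the torsion submodule in degree $p+1$) and not something subtler. This requires unwinding the proof of Lemma \ref{L: ngbd}: there the isomorphism $H^i(\mc P^*_x) \cong \H^i(L;\mc P^*|_L)$ factors through $\H^i(cL - \{v\};\mc P^*) \cong \H^i(\R\times L;\mc P^*)\cong \H^i(L;\mc P^*|_L)$, and the restriction from a cone neighborhood to the deleted cone (equivalently to $U\cap V$, which deformation retracts onto a copy of $X$) is precisely this composite, so compatibility is essentially built into the statement — one just needs to observe that the two copies of $X$ coming from the two cone ends are identified with the same equatorial copy in $U\cap V \cong \R\times X$. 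A secondary technical point is ensuring that $\mc P^*|_U$, $\mc P^*|_V$, and $\mc P^*|_{U\cap V}$ are the ts-Deligne sheaves of the respective (induced) stratifications, which is exactly the content of the preceding lemma on restriction of ts-Deligne sheaves to open subsets and products, so Lemmas \ref{L: prod} and \ref{L: ngbd} apply as stated. Everything else is a routine bookkeeping of the long exact sequence.
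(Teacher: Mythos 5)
Your proposal is correct and follows essentially the same route as the paper: the same two-cone cover of $\Sigma X$ with $U\cap V\cong \R\times X$, the same Mayer--Vietoris sequence, Lemma \ref{L: prod} for the intersection and Lemma \ref{L: ngbd} (plus constructibility, identifying hypercohomology of a distinguished cone neighborhood with the stalk and the restriction maps with the attaching maps) for the two cone pieces, and the same degree-by-degree bookkeeping in the ranges $i\leq p$, $i=p+1,p+2$, and $i\geq p+3$.
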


\begin{proof}
Let $U_1=\Sigma X-\{\mf s\}\cong cX$ and  $U_2=\Sigma X-\{\mf n\}\cong cX$. We consider the Mayer-Vietoris sequence \cite[Remark 2.6.10]{KS}
$$\to \H^i(\Sigma X;\mc P^*)\to \H^i(U_1;\mc P^*)\oplus \H^i(U_2;\mc P^*)\to \H^i(U_1\cap U_2;\mc P^*)\to.$$
 Note that  $U_1\cap U_2\cong (0,1)\times X$ so we have $\H^i(U_1\cap U_2;\mc P^*)\cong H^i(X;\mc P^*)$ by Lemma \ref{L: prod}.

Now consider $\H^i(U_1;\mc P^*)$. Since $\mc P^*$ is $\Sigma X$-clc and since $U_1$ is a distinguished neighborhood of $\mf n$, we have $\H^i(U_1;\mc P^*)\cong H^i(\mc P^*_{\mf n})$ by \cite[Proposition 4.0.2.2]{Sch03}. Since this is $0$ for $i>p+1$, and similarly for $U_2$, we have $\H^i(\Sigma X;\mc P^*)\cong \H^{i-1}(U_1\cap U_2;\mc P^*)\cong \H^{i-1}(X; \mc P^*)$ for $i>p+2$.

For $i\leq p$, using \cite[Proposition 4.0.2.2]{Sch03} again implies that  the hypercohomology map from $U_1$ to $U_1-\{\mf n\}=U_1\cap U_2$ is isomorphic to the attaching map, which is an isomorphism by axiom TAx1\ref{A: attach}, and similarly for $U_2$. It follows that in this range $\H^i(\Sigma X;\mc P^*)\cong\H^i(U_j;\mc P^*)\cong  \H^i(U;\mc P^*)\cong \H^{i}(X; \mc P^*)$ for $j=1,2$.

In the middle range we have 
\begin{diagram}
0&\rTo&\H^{p+1}(\Sigma X;\mc P^*)&\rTo& \H^{p+1}(U_1;\mc P^*)\oplus \H^{p+1}(U_2;\mc P^*)\\
{}&\rTo& \H^{p+1}(U_1\cap U_2;\mc P^*)
&\rTo& \H^{p+2}(\Sigma X;\mc P^*)&\rTo& 0.
\end{diagram}
Once again we have that $\H^{p+1}(U_1;\mc P^*)\cong H^{p+1}(\mc P^*_{\mf n})$, which in this case gives the group $T^{\wp}\H^{p+1}(U_1\cap U_2;\mc P^*)\cong T^{\wp}\H^{p+1}(X;\mc P^*)$ by Lemma \ref{L: ngbd}.
Each of the maps $\H^{p+1}(U_j;\mc P^*)\to \H^{p+1}(U_1\cap U_2;\mc P^*)$ thus corresponds to the inclusion (up to sign) of the $\wp$-torsion subgroup. So we have $\H^{p+1}(\Sigma X;\mc P^*)\cong T^{\wp}\H^{p+1}(X;\mc P^*)$ and $\H^{p+2}(\Sigma X;\mc P^*)\cong\H^{p+1}(X;\mc P^*)/ T^{\wp}\H^{p+1}(X;\mc P^*)$.
\end{proof}

\begin{example}
We provide an example of a computation using Proposition \ref{P: susp} that also demonstrates how Poincar\'e duality can fail for CS sets, both for classical homology and cohomology and for ordinary intersection cohomology, while duality is recovered using ts-Deligne sheaves. 

Let $X = \R P^3$, which is orientable. Then, by standard computations, we have 

\begin{equation*}
H_i(\Sigma \R P^3) = 
\begin{cases}
\Z, & i=4,\\
0, & i=3, \\
\Z_2, & i=2,\\
0, & i=1,\\
\Z, & i=0,
\end{cases}
\qquad
\qquad
H^i(\Sigma \R P^3) = 
\begin{cases}
\Z, & i=4,\\
\Z_2, & i=3, \\
0, & i=2,\\
0, & i=1,\\
\Z, & i=0.
\end{cases}
\end{equation*}
This computation demonstrates the failure, in general, of classical Poincar\'e duality for singular spaces. Even with classical intersection homology, Poincar\'e duality typically only holds rationally; see \cite{GM1, GM2, Bo} and our remarks below the following computation. 

However, let now $\mc E$ be the constant coefficient system with stalk $\Z$, and let $\vec p$ take the value $(1,\emptyset)$ on the suspension points. The dual perversity $D\vec p$ will then take the value $(1, P(\Z))$. By \cite[Theorem 4.19]{GBF32}, we know $\mc D\mc P_{\vec p}^*[-n]\cong \mc P^*_{D\vec p}$, which implies by \cite[Corollary 4.21]{GBF32} the nonsingular pairings 
\begin{align*}
F\H^i(\Sigma \R P^3;\mc P_{\vec p}) \otimes F\H^{4-i}(\Sigma \R P^3;\mc P_{D\vec p}) &\to \Z\\
T\H^i(\Sigma \R P^3;\mc P_{\vec p}) \otimes T\H^{4-i+1}(\Sigma \R P^3;\mc P_{D\vec p}) &\to \Q/\Z,
\end{align*}
where $T\H^i$ denotes the torsion subgroup and $F\H^i$ denotes the torsion-free quotient.
Indeed, we can compute by Proposition \ref{P: susp}:

\begin{align*}
\H^i(\Sigma \R P^3, \mc P_{\vec p}^*) &=
\left\{
\begin{array}{rclr} 
H^3(\R P^3)&=&\Z, & i=4,\\
H^2(\R P^3)/T^\emptyset H^2(\R P^3)&=&\Z_2, & i=3, \\
T^\emptyset H^2(\R P^3)& =&0, & i=2,\\
H^1(\R P^3)&=& 0, & i=1,\\
H^0(\R P^3)&=& \Z, & i=0,
\end{array}
\right.\\
\H^i(\Sigma \R P^3, \mc P_{D\vec p}^*)) &= 
\left\{
\begin{array}{rclr} 
H^3(\R P^3)&=&\Z, & i=4,\\
\phantom{{}^\emptyset}H^2(\R P^3)/T H^2(\R P^3)&=&0, & i=3, \\
\phantom{{}^\emptyset}TH^2(\R P^3) &=& \Z_2, & i=2,\\
H^1(\R P^3)&=& 0, & i=1,\\
H^0(\R P^3)&=& \Z, & i=0.
\end{array}
\right.
\end{align*}
This is consistent with the existence of the claimed nonsingular pairings. By contrast, the first of these hypercohomology computations with ts-perversity $\vec p$ agrees with the classical Goresky-MacPherson intersection cohomology for the self dual perversity $\bar p$ with  $\bar p(4)=1$, but we see that these intersection cohomology groups cannot be self-dual. \hfill\qedsymbol
\end{example}

As a corollary, and as a nice example of an application of Theorem \ref{T: top}, we compute $\H^i(S^k*X;\mc P^*)$ for $k>0$, where $X$ is a compact CS set,  $S^k$ is the $k$-sphere with trivial stratification,  and $S^k*X$ is the join. Rather than use the join stratification of \cite[Section 2.11]{GBF35}, however, it will be more natural for us below to use the following stratification. Recall that we can decompose $S^k*X$ into $cS^k\times X$ and $S^k\times cX$ (see \cite[Section 2.11]{GBF35}). We give $S^k\times cX\subset S^k*X$ the stratification it obtains from the cone and product stratifications, while we stratify $cS^k\times X$ as $D^{k+1}\times X$, where $D^{k+1}$ is the interior of the unit disk with the trivial stratification. These two stratifications agree on the overlap $S^k\times (0,1)\times X$ and so patch together to give a CS set stratification of $S^k*X$. Letting $v$ denote the cone vertex, we identify $S^k$ with the stratum $S^k\times \{v\}\subset S^k\times cX\subset S^k*X$. 

\begin{corollary}\label{C: sphere join}
Let $X$ be a compact CS set, let  $S^k$, $k>0$, be the $k$-sphere with trivial stratification, and let $S^k*X$ be the join with the stratification as above. 
 Let $\vec p$ be a ts-perversity on $S^k*X$, and let $\mc E$ be a ts-coefficient system to which $S^k*X$ is adapted. Then

\begin{equation*}
\H^i(S^k*X;\mc P^*)\cong 
\begin{cases}
\H^{i-k-1}(X; \mc P^*), &i>\vec p_1(S^k)+k+2,\\
\H^{\vec p_1(S^k)+1}(X;\mc P^*)/ T^{\vec p_2(S^k)}\H^{\vec p_1(S^k)+1}(X;\mc P^*),&i=\vec p_1(S^k)+k+2,\\
0,&   \vec p_1(S^k)+1<i<\vec p_1(S^k)+k+2,\\
T^{\vec p_2(S^k)}\H^{\vec p_1(S^k)+1}(X;\mc P^*), &i=\vec p_1(S^k)+1,\\
\H^{i}(X; \mc P^*), &i\leq \vec p_1(S^k).
\end{cases}
\end{equation*}
\end{corollary}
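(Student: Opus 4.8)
The plan is to induct on $k$, peeling off one suspension at a time via the identity $S^k*X=\Sigma(S^{k-1}*X)$ (this is what ``applying Theorem~\ref{T: top} to the iterated suspension'' means here). The base case $k=0$ is nothing but Proposition~\ref{P: susp}: there $S^0*X=\Sigma X$, the stratification described just before the corollary coincides with the suspension stratification, $S^0=\{\mf n,\mf s\}$, and the displayed formula with $k=0$ is verbatim the conclusion of Proposition~\ref{P: susp}, the range $\vec p_1(S^k)+1<i<\vec p_1(S^k)+2$ being empty.

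For the inductive step I fix $k\geq 1$ and compare two stratifications of $S^k*X=\Sigma(S^{k-1}*X)$: the ``corollary stratification'' $\mc Z$ of $S^k*X$, and the suspension stratification $\mc Y$ of $\Sigma(S^{k-1}*X)$ in which $S^{k-1}*X$ carries its own corollary stratification. Using $S^k=\Sigma S^{k-1}$ and the identity $(S^k*S')\setminus S^k=(0,1)\times\big((S^{k-1}*S')\setminus S^{k-1}\big)$ for each singular stratum $S'$ of $X$, I would check that $\mc Y$ refines $\mc Z$, with the same regular strata and the same singular strata except that the single stratum $S^k$ of $\mc Z$ is subdivided in $\mc Y$ into the two suspension points $\mf n,\mf s$ (of codimension $\dim X+k+1$) together with the strata of $(0,1)\times S^{k-1}$ (of the same codimension $\dim X+1$ as $S^k$). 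I would then define a ts-perversity $\vec q$ on $\mc Y$ by letting $\vec q=\vec p$ on the strata common to $\mc Z$ and letting $\vec q=\vec p(S^k)$ on $\mf n$, on $\mf s$, and on each stratum of $(0,1)\times S^{k-1}$. Inspecting Definition~\ref{D: comp perv} shows $\vec q$ and $\vec p$ are $\mc E$-compatible: every singular stratum of $\mc Y$ lies in a singular stratum of $\mc Z$, so only condition~\ref{I: sing in sing} is relevant; on the common strata and on $(0,1)\times S^{k-1}$ the codimension does not change, which forces equality of both perversity components (allowed); and at $\mf n,\mf s$ I have picked the \emph{minimal} admissible value $\vec p_1(S^k)$ of the first component, for which the relevant torsion condition under \ref{I: sing in sing} (the case $\vec p_1=\vec\p_1$) is just $\vec p_2(S^k)\supseteq\vec p_2(S^k)$. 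Theorem~\ref{T: top} then gives $\mc P^*\cong\mc Q^*$, where $\mc Q^*$ is the ts-Deligne sheaf on $\mc Y$ with perversity $\vec q$ and coefficients $\mc E$, so $\H^i(S^k*X;\mc P^*)\cong\H^i(\Sigma(S^{k-1}*X);\mc Q^*)$.

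Next I would apply Proposition~\ref{P: susp} to $\mc Q^*$ on $\Sigma(S^{k-1}*X)$, with common suspension-point value $(\vec p_1(S^k),\vec p_2(S^k))$, rewriting $\H^i(\Sigma(S^{k-1}*X);\mc Q^*)$ in terms of $\H^j(S^{k-1}*X;\mc Q^*|_{S^{k-1}*X})$. By the lemma immediately following Lemma~\ref{L: ngbd} (restrict $\mc Q^*$ first to the open set $(0,1)\times(S^{k-1}*X)\cong\R\times(S^{k-1}*X)$, then to a slice), $\mc Q^*|_{S^{k-1}*X}$ is the ts-Deligne sheaf on $S^{k-1}*X$ for the perversity equal to $\vec p(S^k)$ on the stratum $S^{k-1}$ and to the restriction of $\vec p$ on the remaining strata; when $k=1$ this assigns the same value to both points of $S^0$, so the base case indeed applies, and the further restriction of this sheaf to $X$ is the sheaf $\mc P^*$ on $X$ occurring in the statement. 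Applying the inductive hypothesis to $S^{k-1}*X$ with this perversity expresses $\H^j(S^{k-1}*X;\mc Q^*|_{S^{k-1}*X})$ in terms of $\H^*(X;\mc P^*)$ with first parameter $\vec p_1(S^k)$, degree shift $k-1$, and torsion set $\vec p_2(S^k)$. Substituting this into the Proposition~\ref{P: susp} formula and comparing cases — the only nontrivial inputs being the idempotence $T^{\vec p_2(S^k)}T^{\vec p_2(S^k)}=T^{\vec p_2(S^k)}$ and the arithmetic of the thresholds (suspension thresholds $\vec p_1(S^k)+1$ and $\vec p_1(S^k)+3$ against the inductive thresholds $\vec p_1(S^k)+1$ and $\vec p_1(S^k)+k+1$) — reproduces exactly the claimed formula; in particular for $k\geq 1$ the suspension term at $i=\vec p_1(S^k)+2$ collapses to $0$, which accounts for the widening of the vanishing range.

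The hard part is purely organizational: matching up $\mc Y$ and $\mc Z$ correctly and checking $\mc E$-compatibility of $\vec q$ with $\vec p$ with room to spare, and then keeping the numerical bookkeeping straight when substituting the inductive formula into Proposition~\ref{P: susp}. There is no genuinely new analytic content beyond what is already packaged into Theorem~\ref{T: top}, Proposition~\ref{P: susp}, and the restriction lemmas (Lemma~\ref{L: ngbd} and the one following it).
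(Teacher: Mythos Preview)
Your proof is correct and follows essentially the same approach as the paper. The paper compares $S^k*X$ with the $(k+1)$-fold iterated suspension $\Sigma^{k+1}X$ in a single application of Theorem~\ref{T: top} (assigning $\vec p(S^k)$ to every stratum of $\Sigma^{k+1}X$ contained in $S^k$) and then says ``the computation now follows by applying Proposition~\ref{P: susp} iteratively,'' whereas you unroll this into an explicit induction on $k$, applying Theorem~\ref{T: top} and Proposition~\ref{P: susp} once per step; the underlying idea and the arithmetic (including the idempotence of $T^{\wp}$ that collapses the $i=\vec p_1(S^k)+2$ term) are identical.
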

\begin{proof}
Let $\Sigma^{k+1}X$ be the $k+1$ times iterated suspension of $X$. Topologically (ignoring stratifications) $\Sigma^{k+1}X\cong S^k*X$. Furthermore,  the stratification of $\Sigma^{k+1}(X)$  as an iterated suspension refines the stratification of $S^k*X$. In fact,  the stratifications are identical on $S^k*X-S^k$, which is $D^{k+1}\times X$ with the product stratification. In particular, no singular stratum of $\Sigma^{k+1}X$ is contained in a regular stratum of $S^k*X$. Let $\vec q$ be the ts-perversity on $\Sigma^kX$ such that $\vec q(Z)=\vec p(Z)$ for strata $S$ shared by $\Sigma^{k+1}X$ and $S^k*X$ and such that $\vec q(Z)=\vec p(S^k)$ if $Z$ is a stratum of $\Sigma^{k+1}X$ contained in $S^k$. Then $\vec p$ and $\vec q$ are $\mc E$-compatible for any $\mc E$, and so by Theorem \ref{T: top} the  $\vec p$ ts-Deligne sheaf on $S^k*X$ and the $\vec q$ ts-Deligne sheaf on $\Sigma^{k+1}X$ are quasi-isomorphic. The computation now follows by applying Proposition \ref{P: susp} iteratively.
\end{proof}

\subsection{Singular strata in regular strata}\label{S: sing in reg}

In this section we will show that if $S$ is a singular stratum of $X$ contained in a regular stratum $\bS$ of $\X$ then
 the conditions of Definition \ref{D: comp perv} for such strata  are strictly necessary in order to have $\mc P^*\cong \PP^*$ over $S$. On $\bS$ we have $\PP^*\cong \mc E$ so we will assume also that $\mc P^*\cong \mc E$ over $\bS$ and see that contradictions occur if any of the conditions of Definition \ref{D: comp perv} for this scenario fail.

Suppose $x\in S$. Then $x$ has a distinguished neighborhood $\R^{n-k}\times cL$, $k>0$. Topologically this is homeomorphic to $cS^{n-k-1}\times cL\cong c(S^{n-k-1}*L)$. Furthermore, since $x$ is in a regular stratum of $\X$,  we have $(c(S^{n-k-1}*L),x)\cong (\R^n,x)$ by \cite[Corollary 2.10.2]{GBF35} and its proof. Thus $H^*(S^{n-k-1}*L)\cong H^*(S^{n-1})$, and so $L$ must be a $k-1$ homology sphere. We will write $L$ as $L^{k-1}$ to remind us of this.

Returning to our ts-Deligne sheaves, we have $H^*(\PP^*_x)=H^*(\mc E_x)$.  Since $x$ has a Euclidean neighborhood, $\mc E$ is clc on our neighborhood of $x$ and so  each derived cohomology sheaf $\mc H^i(\mc E)$ is constant on this neighborhood. By assumption there is some $\wp\subset P(R)$ such that $H^1(\mc E_x )$ is $\wp$-torsion while $H^0(\mc E_x)$ is $\wp$-torsion free. All other $H^i(\mc E_x)$ are $0$. 

On the other hand, by Lemma  \ref{L: ngbd}, 
$$H^i(\mc P^*_x)\cong 
\begin{cases}
0,&i>\vec p_1(S)+1,\\
T^{\vec p_2(S)}\H^i(L^{k-1};\mc E), &i=\vec p_1(S)+1,\\
\H^i(L^{k-1};\mc E), &i\leq \vec p_1(S).
\end{cases}$$ 
Since there is some $\wp$ such that $H^1(\mc E_x)$ is $\wp$-torsion and $H^0(\mc E_x)$ is $\wp$-torsion free, any map $H^0(L^{k-1},H^1(\mc E_x))\to H^{k-1}(L^{k-1},H^0(\mc E_x))$ must be trivial, so the hypercohomology spectral sequence for $\H^*(L^{k-1};\mc E)$ degenerates (using also that $L^{k-1}$ has the cohomology of a $k-1$ sphere). It follows that if $k=1$ we have $\H^i(L^{0},\mc E)\cong H^i(\mc E_x)\oplus H^i(\mc E_x)$, and if $k\geq 3$ we have 
\begin{equation*}
\H^i(L^{k-1};\mc E)\cong
\begin{cases}
H^1(\mc E_x), &i=k,\\
H^0(\mc E_x), &i=k-1,\\
H^1(\mc E_x), &i=1,\\
H^0(\mc E_x), &i=0,\\
0,&\text{otherwise.}
\end{cases}
\end{equation*} 

Similarly if $k=2$, the only possibly nontrivial groups are for $i=0,1,2$ with $\H^0(L^1;\mc E)\cong H^0(L^1;\mc H^0(\mc E))\cong H^0(\mc E_x)$ and $\H^2(L^1;\mc E)\cong H^1(L^1;\mc H^1(\mc E))\cong H^1(\mc E_x)$. But the only easy information we have about $\H^1(L^1;\mc E)$ is that it must fit in the extension problem
\begin{equation}\label{E: ssext}
0\to  H^0(\mc E_x)\to \H^1(L^1;\mc E)\xr{q} H^1(\mc E_x)\to 0.
\end{equation}

Given these preliminaries, we can now consider the necessity of the conditions of Definition \ref{D: comp perv}, separately in the cases $k=1$, $k=2$, and $k\geq 3$, by comparing $H^*(\mc E_x)$ with $H^*(\mc P^*_x)$ as obtained from the preceding computations.

\paragraph{$k=1$.} 
In this case $H^i(\mc P_x)$ comes by torsion-tipped truncating $H^i(L^0;\mc E)\cong H^i(\mc E_x)\oplus H^i(\mc E_x)$. Thus there is no way that $H^i(\mc P_x)$ can equal $H^i(\PP^*_x)\cong H^i(\mc E_x)$, regardless of perversity unless $\mc E$ is trivial. This shows why we must always rule out codimension one strata of $X$ in regular strata of $\X$.

\paragraph{$k\geq 3$.} Given the computations above, we can readily see that if $\mc E$ is not trivial then $H^*(\mc P_x)\cong H^*(\mc E_x)$ (and hence $H^*(\mc P_x)\cong H^*(\PP_x)$) if and only one of the following scenarios holds:

\begin{enumerate}
\item $\vec p_1(S)=-1$, $H^0(\mc E_x)$ is $\vec p_2(S)$-torsion, $H^1(\mc E_x)=0$,
\item $\vec p_1(S)=0$, $H^1(\mc E_x)$ is $\vec p_2(S)$-torsion, $H^0(\mc E_x)$ arbitrary,
\item $1\leq \vec p_1(S)\leq k-3$, $H^*(\mc E_x)$ arbitrary,
\item $\vec p_1(S)=k-2$, $H^0(\mc E_x)$ is $\vec p_2(S)$-torsion free, $H^1(\mc E_x)$ is arbitrary,
\item $\vec p_1(S)=k-1$, $H^0(\mc E_x)=0$, $H^1(\mc E_x)$ is $\vec p_2(S)$-torsion free.
\end{enumerate}
Of course here ``arbitrary'' still means within the limitations of $\mc E$ being a $\wp$-coefficient system 
for some $\wp\subset P(R)$, and we have recovered  precisely the conditions from Definition \ref{D: comp perv}.

\paragraph{$k=2$.} 
This case is a bit more delicate as we can't pin down $\H^1(L^1;\mc E)$ in general. However, we 
see again that to have $H^*(\mc P_x)\cong H^*(\PP^*_x)\cong H^*(\mc E_x)$ with $\mc E$ nontrivial, it is first of all necessary to have $\vec p_1(S)\geq -1$, and also by the same arguments if  $\vec p_1(S)=-1$, then we must have $H^1(\mc E_x)=0$, and $H^0(\mc E_x)$ must be $\vec p_2(S)$ torsion.

If $\vec p_1(S)=1=k-1$, then to have $H^1(\mc P_x)\cong  H^1(\mc E_x)$ requires $\H^1(L^1;\mc E)\cong H^1(\mc E_x)$, which from the short exact sequence above requires $H^0(\mc E_x)=0$. And looking at degree $2$ we must again have that $H^1(\mc E_x)$ is $\vec p_2(S)$-torsion free.

If $\vec p_1(S)\geq 2$, then we must have $H^1(\mc E_x)=0$ for degree $2$ to work but again we also need $\H^1(L^1;\mc E)\cong H^1(\mc E_x)$ and so $H^0(\mc E_x)=0$, which forces $\mc E$ to be trivial. 

Finally, suppose $\vec p_1(S)=0=k-2$. Then to have $H^*(\mc P_x)\cong H^*(\PP^*_x)$ we must have $T^{\vec p_2(S)}\H^1(L^1;\mc E)\cong H^1(\mc E_x)$. In particular, $H^1(\mc E_x)$ must be $\vec p_2(S)$-torsion. 
Let $k:H^1(\mc E_x)\into \H^1(L^1;\mc E)$ take $H^1(\mc E_x)$ isomorphically onto $T^{\vec p_2(S)}\H^1(S;\mc E)$, and consider the composition $qk$, where $q$ is the quotient map in \eqref{E: ssext}. If $z\in H^1(\mc E_x)$, $z\neq 0$, then $z$ is $\wp$-torsion for some $\wp$ such that $\mc E$ is a $\wp$-coefficient system, so $qk(z)\neq 0$ or else there would be a $\wp$-torsion element in $\ker(q)\cong H^0(\mc E_z)$, violating that $\mc E$ is a $\wp$-coefficient system. Thus $qk$ is injective. Since $H^1(\mc E_x)$ is a finitely-generated torsion module over a PID, it is Artinian\footnote{In fact, since $H^1(\mc E_x)$ is torsion, we can treat it as a module over $R/\text{Ann}(H^1(\mc E_x))$. If $H^1(\mc E_x)\neq 0$ then Ann$(H^1(\mc E_x))\neq 0$, and this is an Artinian ring since $R$ is a PID. If $H^1(\mc E_x)=0$, it is clearly Artinian.}  and so every injective endomorphism is an isomorphism  \cite[Lemma II.4.$\alpha$]{RIB}. Precomposing $k$ with the inverse of this isomorphism provides a splitting of \eqref{E: ssext}. Consequently we also have $H^0(\mc E_x)\cong \H^1(L^1;\mc E)/T^{\vec p_2(S)}\H^1(L^1;\mc E)$, and so $H^0(\mc E_x)$ must be $\vec p_2(S)$-torsion free.

\subsection{Singular strata in singular strata}\label{S: sing in sing}

In this case we consider the necessity of the conditions of Definition \ref{D: comp perv} for a singular stratum $S$ of $X$ contained in singular stratum $\bS$ of $\X$. As noted above, these conditions won't always be necessary in the strictest sense since weaker conditions might suffice depending on the local cohomology computations resulting from specific choices of spaces, perversities, and coefficient systems. Instead, we show the conditions to be ``necessary in general,'' meaning that we will demonstrate the existence of examples where $\mc P^*$ is not quasi-isomorphic to $\PP^*$ because the conditions fail. 
Accordingly, we can choose to work in relatively simple settings.

We first discuss necessity in the codimension $0$ setting, followed by the codimension $>0$ case. For simplicity, we assume that our ts-perversities are efficient; see Remark \ref{R: efficient}. This allows us to focus on the degree ranges where the conditions of Definition \ref{D: comp perv} may be relevant rather than cases that are degenerate due to extreme degree values.

\subsubsection{Codimension $0$} Suppose that $S\subset \bS$ and $\dim(S)=\dim(\bS)$. In this case, a point $x\in S$ may have distinguished neighborhoods in $X$ and $\X$ that are filtered homeomorphic and so have the same link $L$. If $\mc P^*\cong \PP^*$ then in particular $\H^i(L;\mc P^*)\cong \H^i(L;\PP^*)$.  It is then clear from Lemma \ref{L: ngbd} that we will need to have $\vec p(S)=\vec \p(\bS)$ in order to have $H^*(\mc P^*_x)\cong H^*(\PP^*_x)$ unless there are further restrictions on $\H^i(L;\mc P^*)$.

\subsubsection{Codimension $>0$}
For simplicity, let us take  $L$ to be a trivially-stratified $n-k-2$ manifold, in which case we can assume that $H^i(L;\mc E)$ is nontrivial in any dimension we like or has $\wp$-torsion in any dimension $0$ to $n-k-2$  by manipulating $L$ and $\mc E$. 
We then consider $\X=\R^{k+1}\times cL$ for $k\geq 0$. Topologically, $\R^{k+1}\times cL\cong cS^{k}\times cL\cong c(S^{k}*L)$. If we instead stratify  this space as the cone on $S^{k}*L$, using the stratification of $S^{k}*L$ described just before Corollary \ref{C: sphere join} we obtain a refinement $X$ of $\X$. In fact, $X$ differs from $\X$ only by the addition of a single zero-dimensional stratum, namely the vertex $V$ of $c(S^{k}*L)$, which we can identify with $(0,v)\in \R^{k+1}\times cL$ if $v$ is the cone vertex of $cL$. 

By such a construction, we obtain an $X$ and $\X$ such that $X$ has a singular stratum $S=\{V\}$ contained in the singular stratum $\bS=\R^{k+1}\times \{v\}$ of $\X$ and such that $\codim(S)-\codim(\bS)=k+1$. The actual values of $\codim(S)$ and $\codim(\bS)$ will of course depend on $\dim(L)$. If we want examples with higher dimensional strata we can consider instead  $\R^m\times X$ and $\R^m\times \X$, though Lemma \ref{L: prod} shows that the cohomology computations will be the same. 
We will tend to use $V$ when referring to the point and $S$ when thinking of the stratum $S=\{V\}$.

We suppose that the ts-Deligne sheaves $\mc P^*$ and $\PP^*$ are quasi-isomorphic when restricted to the complement of $V$ and consider what would be necessary for them to be quasi-isomorphic at $V$.

Based on Lemma \ref{L: ngbd}, and our assumption that $\mc P^*\cong \PP^*$ off of $V$, we have

\begin{equation*}
H^i(\PP^*_V)\cong 
\begin{cases}
0,&i>\vec \p_1(\bS)+1,\\
T^{\vec \p_2(\bS)}\H^i(L;\mc P^*|_L) ,&i=\vec \p_1(\bS)+1,\\
\H^i(L;\mc P^*|_L),&i\leq \vec \p_1(\bS)
\end{cases}
\end{equation*}
and

\begin{equation*}
H^i(\mc P^*_V)\cong 
\begin{cases}
0,&i>\vec p_1(S)+1,\\
T^{\vec p_2(S)}\H^i(S^k*L;\mc P^*|_{S^k*L}) ,&i=\vec p_1(S)+1,\\
\H^i(S^k*L;\mc P^*|_{S^k*L}),&i\leq \vec p_1(S).
\end{cases}
\end{equation*}
We can compute $\H^i(S^k*L;\mc P^*|_{S^k*L})$ in terms of $\H^*(L;\mc P^*|_L)$ using  Corollary \ref{C: sphere join}:

\begin{equation}\label{E: susplink}
\H^i(S^k*L;\mc P^*|_{S^k*L})\cong 
\begin{cases}
\H^{i-k-1}(L; \mc P^*), &i>\vec \p_1(\bS)+k+2,\\
\H^{\vec \p_1(\bS)+1}(L;\mc P^*)/ T^{\vec p_2(\bS)}\H^{\vec p_1(\bS)+1}(L;\mc P^*),&i=\vec \p_1(\bS)+k+2,\\
0,&   \vec \p_1(\bS)+1<i<\vec \p_1(\bS)+k+2,\\
T^{\vec \p_2(\bS)}\H^{\vec \p_1(\bS)+1}(L;\mc P^*), &i=\vec \p_1(\bS)+1,\\
\H^{i}(L; \mc P^*), &i\leq \vec \p_1(\bS).
\end{cases}
\end{equation}

From these equations, we can see what constraints are necessary in this case and why: In order to have $H^i(\PP^*_V)\cong H^i(\mc P_V)$ we must truncate $\H^i(S^k*L;\mc P^*|_{S^k*L})$ in such a way that the result agrees with $H^i(\PP^*_V)$. If $\vec p_1(S)<\vec \p_1(\bS)$ then in general we will be forcing $H^i(\mc P^*_V)$ to be $0$ in some degrees $\leq \vec \p_1(\bS)+1$ in which $H^i(\PP^*_V)$ will not generally be $0$ without further vanishing assumptions. Furthermore, even if $\vec p_1(S)=\vec \p_1(\bS)$ we must have $\vec p_2(S)\supset \vec \p_2(\bS)$ in order to make sure we get all of $T^{\vec \p_2(\bS)}\H^i(L;\mc P^*|_L)$ in degree $\vec \p_1(\bS)+1$. Similarly, if $\vec p_1(S)\geq \vec \p_1(\bS)+k+2$, i.e.\ if $\vec p_1(S)>\vec \p_1(\bS)+\codim(S)-\codim(\bS)$, then the term $\H^{\vec \p_1(\bS)+1}(L;\mc P^*)/ T^{\vec p_2(\bS)}\H^{\vec p_1(\bS)+1}(L;\mc P^*)$  (as well as possibly some of those above it in \eqref{E: susplink}) will appear in $H^*(\mc P^*_V)$ even though it does not appear in $H^i(\PP^*_V)$, but these are not necessarily trivial. If
 $\vec p_1(S)=\vec \p_1(\bS)+k+1$, i.e.\ if $\vec p_1(S)=\vec \p_1(\bS)+\codim(S)-\codim(\bS)$, the only problematic degree is 
 $H^{\vec \p_1(\bS)+k+2}(\mc P^*_V)\cong T^{\vec p_2(S)}\left(\H^{\vec \p_1(\bS)+1}(L;\mc P^*)/ T^{\vec p_2(\bS)}\H^{\vec p_1(\bS)+1}(L;\mc P^*)\right)$. For this to vanish in general we need $\vec p_2(S)\subset \vec \p_2(\bS)$.

\section{Dimension axioms}\label{S: dimension}

In addition to the original Ax1 axioms of \cite[Section 3.3]{GM2} (and the slight modification Ax1'), the Deligne sheaves of Goresky-MacPherson can also be characterized by a very different set of axioms that were used in the original proofs   of topological invariance of intersection homology in \cite[Section 4]{GM2}. Called Ax2, these are phrased in terms of the support and ``cosupport'' dimensions of the Deligne sheaves, i.e.\ the dimensions of the sets on which $H^i(\mc P_x)$ and $H^i(f_x^!\mc P^*)$ are non-zero for the various $i$. This perspective has historically been very useful, to the extent that these axioms are sometimes used to define intersection homology, e.g.\ see \cite{HS91}. In this section we formulate a version of these axioms for ts-Deligne sheaves and show that they are equivalent to the TAx1 axioms discussed above. We culminate with Theorem \ref{T: constrained2}, which is another formulation of our topological invariance results more attuned to this context. 

\paragraph{Some algebra notation.} Starting in Section \ref{S: more constrained} we will need some notation generalizing $T^{\wp}$. Recall that if $A$ is a finitely-generated module over the PID $R$ then $A\cong R^{m_0}\oplus \bigoplus R/\langle p_i^{m_i}\rangle$, where  the $p_i$ are primes (not necessarily unique), $m_0\geq 0$, and $m_i\geq 1$ for $i\neq 0$. Alternatively, as $R\cong R/\langle 0^m\rangle$ for any $m>0$, we can also write more consistently $A\cong \bigoplus R/\langle p_i^{m_i}\rangle$, where each $p_i$ is a prime or $0$ and each $m_i>0$. Our previous construction $T^{\wp}A$ isolates the summands of $A$ for which $p_i\in \wp$. The following construction is analogous but allowing the possibility $p_i=0$.

\begin{definition}\label{D: mf f}
Let $P_+(R)=P(R)\cup\{0\}$, where $P(R)$ is the set of primes of $R$ (up to unit). 
If $A\cong \bigoplus R/\langle p_i^{m_i}\rangle$ is a finitely-generated $R$-module and $\wp\subset P_+(R)$, define the \emph{$\wp$-component} of $A$ to be the summand $C^\wp A= \bigoplus_{p_i\in \wp} R/\langle p_i^{m_i}\rangle$. If $\mf p\in P(R)$ is a single element, we abuse notation and write $C^{\mf p}A$ instead of $C^{\{\mf p\}}A$.
\end{definition}

For example, if $R=\Z$ and $A\cong \Z^3 \oplus  \Z_8\oplus \Z_{25}$, then $C^{\{5,7\}}A=\Z_{25}$, $C^{\{0,2,7\}}=\Z^3\oplus \Z_8$, and $C^0A=\Z^3$.
Note that $C^\wp A$ is not precisely a submodule of $A$ in general, as the isomorphisms $A\cong \bigoplus R/\langle p_i^{m_i}\rangle$ are not canonical, but this construction will be sufficient for our purposes as we will be interested primarily in whether or not $C^\wp A=0$ or, equivalently, whether or not $A$ has certain kinds of torsion and/or a nontrivial torsion-free quotient.

\subsection{Perversity and coefficient constraints}\label{S: dim const}

Unfortunately, support and cosupport axioms can only characterize ts-Deligne sheaves if we limit ourselves to constrained ts-perversities. In this subsection we will see why that is. The basic issue is that we know our ts-Deligne sheaves are characterized by the axioms TAx1' and so would like to see when it is possible to recover the information content of those axioms from support and cosupport information. 
 We will further assume below that our constrained ts-perversities satisfy $\vec p_1(2)=0$, i.e.\ that they are strongly constrained. While not strictly necessary, this stronger condition allows us to avoid dealing with a plethora of case analyses and strong restrictions on ts-coefficients.

We first note that knowing that some property holds on some $k$-dimensional union of strata is not enough by itself to tell us whether or not that property holds on \emph{all} $k$-dimensional strata. So in order to convert (co)support information into information about behavior on all strata of a given dimension, all strata of the same dimension need to be treated equivalently, i.e.\ we need to consider ts-perversities that are functions of codimension alone.

To see why $\vec p_1$ must be nondecreasing, let us simplify and consider field coefficients, in which case our ts-Deligne sheaves are simply the usual Deligne sheaves. The axioms TAx1' tell us that the key information in this case for characterizing Deligne sheaves is knowing for each stratum $S$ the degrees for which $H^i(\mc P^*_x)$ and $H^i(f_x^!\mc P^*)$ are $0$ for $x\in S$. Note that constructibility assumptions will tell us that these modules vanish for some $x\in S$ if and only if they vanish for all $x\in S$. 
Let us focus on $H^i(\mc P^*_x)$ and consider how TAx1' translates into support information.
The diagram \eqref{E: perv-} below serves as a good model (though focus only on the * entries for now) with codimension $k$ increasing to the right, degree $i$ increasing upward,  the heights of the columns given by the values of $\vec p_1(k)$, and so each $*$ representing a (possibly) non-zero $H^i(\mc P^*_x)$, $x\in X_{n-k}$.  

Suppose now that for a specific $i$ we know that the dimension of the support of $H^i(\mc P^*_x)$ is $n-k$. This tells us that $H^i(\mc P^*_x)=0$ for $x$ in strata of codimensions $<k$, corresponding in the diagram to no $*$ entries at height $i$ for columns left of $k$.  It also tells us that $\vec p_1(k)$ must be $\geq i$. Now if  
$\vec p_1$ is nondecreasing then as $k$ increases the columns get taller and as $i$ increases the support dimensions of the $H^i(\mc P^*_x)$ get smaller. Suppose, however, that we allow $\vec p_1$ to decrease at some point; e.g.\ suppose we change the $\vec p$ shown in the diagram so that $\vec p_1(9)=0$. Since the diagram suggests that the support dimension of $H^1(\mc P^*_x)$ is $n-5$, dropping $\vec p_1(9)$ to force $H^1(\mc P^*_x)=0$ on a set of dimension $n-9$ does not alter the overall support dimension of $H^1(\mc P^*_x)$. Conversely, if our only information is support dimensions and we are trying to recover values of $\vec p_1(k)$, the vanishing of $H^1(\mc P^*_x)$ on an $n-9$ dimensional subspace of an $n-5$ dimensional support will not be detectable, and so would not be enough to determine the true value of  $\vec p_1(9)$. Roughly said: if $\vec p_1$ is nondecreasing, then a diagram such as \eqref{E: perv-} allows us to recover the column heights from the row depths and vice versa; this is essentially the content of the support axiom. However,  if $\vec p_1$ is allowed to decrease, this is no longer possible.

A similar consideration implies that we need the dual perversity $D\vec p$  to be nondecreasing in its first component (recall Definition \ref{D: ts-perv}). To see this, let $\mc D$ denote Verdier duality; then \cite[Theorem 4.19]{GBF32} says that $\mc D\mc P_{\vec p}^*[-n]\cong \mc P^*_{D\vec p}$, where $\mc P^*_{D\vec p}$ is also taken with respect to the dual coefficient system $\mc D\mc E$. The key observation for our purposes is that $$f_x^!\mc P^*_{\vec p}\cong f_x^!\mc D\mc D\mc P^*_{\vec p}\cong \mc Df_x^*\mc D\mc P^*_{\vec p}\cong \mc Df_x^*\mc P^*_{D\vec p}[n].$$
So continuing to assume field coefficients and letting $X$ be a pseudomanifold for simplicity, we have $H^i(f_x^!\mc P^*_{\vec p})\cong H^{n-i}(f_x^*\mc P^*_{D\vec p})$, using the Universal Coefficient Theorem for Verdier duality \cite[Section V.7.7]{Bo} and the finite generation implied by the constructibility of the sheaves \cite[Theorem 4.10]{GBF32}. Consequently, information about $H^i(f_x^!\mc P^*_{\vec p})$ is equivalent to information about $H^{n-i}(f_x^*\mc P^*_{D\vec p})$, and so the same argument above applies to say that obtaining full TAx1' information from cosupport dimensions relies on the presupposition that $D\vec p_1$ is nondecreasing. 

Since $\vec p_1(k)+D\vec p_1(k)=k-2$, we can only have  $\vec p_1$ and $D\vec p_1$  both nondecreasing if $\vec p_1$ satisfies the Goresky-MacPherson condition $\vec p_1(k)\leq \vec p_1(k+1)\leq \vec p_1(k)+1$. Furthermore, if we want to allow
the full range of possible ts-coefficients so that $H^i(\mc E_x)$ may be nonzero for both $i=0,1$ then we should treat $\mc P^*$ as if it is also truncated over the regular strata using $\vec p_1(0)\geq 0$. The nondecreasing requirements on $\vec p_1$ and $D\vec p_1$ then imply that we must have both $\vec p_1(k)$ and $D\vec p_1(k)$ always $\geq 0$. This is only possible if there are no codimension one strata and $\vec p_1(2)=0$. Thus we see that $\vec p$ must be constrained with $\vec p_1(2)=0$, and codimension one strata must be disallowed.

\begin{remark}
This last choice of $\vec p_1(2)=0$ is a bit artificial. If we allow either $H^1(\mc E)=0$ or $H^0(\mc E)=0$ then we could again consider any constrained ts-perversity $\vec p$ with $\mc E\in {}^{\vec p(2)}D^\heartsuit(\text{Dom}(\mc E))$. As noted above, however,  we will leave these more general cases to the interested reader.
\end{remark}

Similar considerations imply that if $\vec p_1(k)$ stays constant over some range of $k$, the sets $\vec p_2(k)$ must be nondecreasing. The same restriction on $D\vec p$ implies that in a range where $\vec p_1(k)$ is strictly increasing, the sets $\vec p_2(k)$ must be nonincreasing. Altogether, we have now argued that we should limit ourselves to strongly constrained ts-perversities (or at least constrained ones).

Lastly, there is one other way in which we must constrain our data. Returning to PID coefficients, suppose that $T^{\mf p}H^1(\mc E_x)\neq 0$  for some $\mf p\in P(R)$. Then $\dim\{\text{supp}(T^{\mf p}H^1(\mc P^*_x))\}=n$, and analogously to the above arguments, support information would be insufficient to tell us about $T^{\mf p}H^1(\mc P^*_x)$ on higher codimension strata. To remedy this, we must assume that if $T^{\mf p}H^1(\mc E_x)\neq 0$ then $\mf p\in \vec p_2(k)$ for all $k$ such that $\vec p_1(k)=0$, so that $\mf p$ torsion is always allowed in degree $1$. In particular, we must have that $H^1(\mc E_x)$ is $\vec p_2(2)$-torsion.
Analogously, using that $\vec p_1(2)=0$, TAx1' also says that we will need ts-Deligne sheaves to have $T^{\vec p_2(2)}H^n(f^!_x\mc P^*)=0$. But on the manifold $U_2$, we have  $H^n(f^!_x\mc P^*)\cong H^n(f_x^!\mc E)\cong H^0(\mc E_x)$, so we will not be able to detect $T^{\vec p_2(2)}H^n(f^!_x\mc P^*)=0$ on $X_{n-2}$ if $T^{\vec p_2(2)}H^0(\mc E_x)$ is ever non-zero, as this would imply $\dim\{x\mid T^{\vec p_2(2)}H^0(\mc E_x)\neq 0\}=n$. So if $T^{\mf p}H^0(\mc E_x)\neq 0$ for some $x$ then we need to have $\mf p\notin \vec p_2(2)$. Alternatively, if $\mf p\in \vec p_2(2)$, then we must have $T^{\vec p_2(2)}H^0(\mc E_x)=0$ for all $x$. But together these conditions are equivalent to $\mc E\in {}^{\vec p(2)}D^\heartsuit(\text{Dom}(\mc E))$.

Therefore, we limit ourselves in this section primarily to the case where $\vec p$ is a strongly constrained ts-perversities (Definition \ref{D: constrained}) with $\mc E\in {}^{\vec p(2)}D^\heartsuit(\text{Dom}(\mc E))$, though we will see in Section \ref{S: weak} that we can also consider weakly constrained ts-perversities if we allow ourselves some additional information.

\subsection{More about constrained perversities}\label{S: more constrained}

Classically, one can visual perversities satisfying the Goresky-MacPherson condition as ``sub-step'' functions. Similarly, strongly constrained ts-perversities can be visualized in diagrams such as the following in which the ground ring is $\Z$:

\begin{equation}\label{E: perv-}
\begin{array}{c|ccccccccccc}
4 &  &  &  &  &  &  &   &\{5\} & \{2,5,7\}&*  \\ 
3 &  &  &  &  &  &  &\{5\}   &* & *&*  \\ 
2 &  &  &  &  &  &  \{2,5\} &*  & * & *  & * \\ 
1 &  & \{2\} & \{2,3\} & * & * & * & * & * & * & *  \\ 
0 & * & * & * & * & * & * & * & * & * & * & \\
\hline
&2&3&4&5&6&7&8&9&10&11
\end{array} 
\end{equation}
Here the column labels along the bottom indicate  codimension parameters and the row labels on the left indicate  degrees. The asterisks display the height of $\vec p_1(k)$ while the sets of primes give $\vec p_2(k)$. The diagram is meant to evoke the cut-off degrees for the truncations determined by $\vec p$ with the primes in each $\vec p_2(k)$ surviving for an extra degree. In particular, the displayed ts-perversity is given by

\begin{align*}
\vec p(2)&=(0,\emptyset)&
\vec p(3)&=(0,\{2\})&
\vec p(4)&=(0,\{2,3\})\\
\vec p(5)&=(1,\emptyset)&
\vec p(6)&=(1,\emptyset)&
\vec p(7)&=(1,\{2,5\})\\
\vec p(8)&=(2,\{5\})&
\vec p(9)&=(3,\{5\})&
\vec p(10)&=(3,\{2,5,7\})\\
\vec p(11)&=(4,\emptyset)\\
\end{align*}
Note that $\vec p_1$ satisfies the Goresky-MacPherson conditions while $\vec p_2$ grows in each row but shrinks with each step up.

Now let $\vec p$ be a strongly constrained ts-perversity, let $m\in \Z_{\geq 0}$, and let $\mf p\in \primeset{P}_+(R)$. If $m>\vec p_1(n)+1$ or if $m=\vec p_1(n)$ and $\mf p\notin \vec p_2(n)$, set $\zvec p^{-1}(m,\mf p)=\infty$. Otherwise, generalizing the definition in \cite[Section 4.1]{GM2}, we define

\begin{equation*}
\zvec p^{-1}(m,\mf p)=
\begin{cases}
\min\{c\geq 2\mid \vec p_1(c)=m-1, \mf p\in \vec p_2(c)\}, &\text{if such a $c$ exists},\\
 \min\{c\geq 2\mid \vec p_1(c)=m\}, &\text{otherwise}.\\
\end{cases}
\end{equation*}

In terms of our diagram above, $\zvec p^{-1}(m,\mf p)$ is the column number of the leftmost entry in the $m$th row containing $\mf p$. If $\mf p$ is not listed explicitly in the $m$th row, then this is the column of the leftmost $*$. In our example above, $\zvec p^{-1}(1,2)=3$ while $\zvec p^{-1}(1,5)=5$. Note that since $\vec p_1(c)\neq -1$ for any $c$, we have $\zvec p^{-1}(0,\mf p)=2$ for any $\mf p$. Also, if $\mf p=0$ then $\mf p\in \vec p_2(c)$ is impossible and so $\zvec p^{-1}(m,0)= \min\{c\geq 2\mid \vec p_1(c)=m\}$ as in \cite{GM2}.

\begin{remark}\label{R: empty}
If $\vec p_2(k)=\emptyset$ for all $k\geq 2$, then $\zvec p^{-1}$ reduces to the $\zvec p_1^{-1}$ of \cite[Section 4.1]{GM2} (in \cite[Section V.4.6]{Bo}, Borel writes $\geq$ instead of $=$ in the definition, but under the Goresky-MacPherson perversity restriction, $\min\{c\mid \vec p_1(c)=m\}=\min\{c\mid \vec p_1(c)\geq m\}$ since $\vec p_1$ must take all values between $0$ and $\vec p_1(n)$). 

On the other hand, if $\vec p_2(k)=P(R)$ for all $k\geq 2$ then $\zvec p^{-1}(m,\mf p)=\min\{c\geq 2\mid \vec p_1(c)=m-1\}=\zvec p^{-1}(m-1,0)$ for all $\mf p\in P(R)$ and all $m>0$. 
\end{remark}

\begin{remark}\label{R: inverse}
Thinking in terms of diagrams as above and using that $\vec p$ is strongly constrained, we see that for a fixed $\mf p\in \primeset{P}_+ (R)$  we have $k\geq \zvec p^{-1}(m,\mf p)$ if and only if either

\begin{enumerate}
\item $\vec p_1(k)\geq m$ or 

\item $\vec p_1(k)=m-1$ and $\mf p\in \vec p_2(k)$.

\end{enumerate}
In particular if $\mf p=0$ then $k\geq \zvec p^{-1}(m,0)$ if and only 
$\vec p_1(k)\geq m$. If $m=0$ this says that $k\geq \zvec p^{-1}(0,\mf p)=2$ if and only if 
$\vec p_1(k)\geq 0$, though this is tautological as both statements are always true.

This observation does not require the last property of constrained ts-perversities, only the first four. However, below we will need these statements for both $\vec p$ and its dual $D\vec p$. The last property is needed for the dual to also be strongly constrained.
\end{remark}

\begin{lemma}
$\vec p$ is a strongly constrained ts-perversity if and only if its dual $D\vec p$ is also strongly constrained.
\end{lemma}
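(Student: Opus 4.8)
The plan is to verify directly that each of the five defining conditions for ``strongly constrained'' is symmetric under $\vec p \mapsto D\vec p$, using the identity $D\vec p_1(k) = k-2-\vec p_1(k)$ and $D\vec p_2(k) = P(R) - \vec p_2(k)$ from Definition \ref{D: ts-perv}. Since $D(D\vec p) = \vec p$, it suffices to prove one direction: if $\vec p$ is strongly constrained then so is $D\vec p$.

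First I would dispense with the easy conditions. Condition (1), dependence only on codimension, is immediate since $D\vec p$ is built pointwise from $\vec p$ using only the value $\vec p(k)$ and $\codim$, hence is still a function of $k$ alone. Condition (3), $D\vec p_1(2) = 0$, follows at once from $D\vec p_1(2) = 2-2-\vec p_1(2) = -\vec p_1(2) = 0$ using that $\vec p$ is \emph{strongly} constrained (this is exactly where the ``strongly'' hypothesis is used, and it is the reason the statement is phrased for strongly constrained rather than merely constrained perversities). For condition (2), the Goresky-MacPherson growth condition, I compute $D\vec p_1(k+1) - D\vec p_1(k) = \big((k+1)-2-\vec p_1(k+1)\big) - \big(k-2-\vec p_1(k)\big) = 1 - (\vec p_1(k+1)-\vec p_1(k))$; since $0 \le \vec p_1(k+1)-\vec p_1(k) \le 1$ by hypothesis, this difference lies in $\{0,1\}$, which is exactly the growth condition for $D\vec p_1$. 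Note also that this computation shows $D\vec p_1(k+1) = D\vec p_1(k)$ if and only if $\vec p_1(k+1) = \vec p_1(k)+1$, and $D\vec p_1(k+1) = D\vec p_1(k)+1$ if and only if $\vec p_1(k+1) = \vec p_1(k)$ — the two cases swap.

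The substance is in conditions (4) and (5), where I would use the case-swap just observed together with the De Morgan-type fact that for subsets of $P(R)$, $A \supset B \iff A^c \subset B^c$, where $(\,\cdot\,)^c$ denotes complement in $P(R)$. Suppose $D\vec p_1(k+1) = D\vec p_1(k)$; then $\vec p_1(k+1) = \vec p_1(k)+1$, so condition (5) for $\vec p$ gives $\vec p_2(k+1) \subset \vec p_2(k)$, hence $P(R)-\vec p_2(k+1) \supset P(R)-\vec p_2(k)$, i.e.\ $D\vec p_2(k+1) \supset D\vec p_2(k)$, which is condition (4) for $D\vec p$. Symmetrically, if $D\vec p_1(k+1) = D\vec p_1(k)+1$ then $\vec p_1(k+1) = \vec p_1(k)$, so condition (4) for $\vec p$ gives $\vec p_2(k+1) \supset \vec p_2(k)$, whence $D\vec p_2(k+1) \subset D\vec p_2(k)$, which is condition (5) for $D\vec p$. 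Having verified all five conditions for $D\vec p$, the forward implication is complete, and applying it to $D\vec p$ together with $D(D\vec p)=\vec p$ gives the converse. I do not anticipate a serious obstacle here; the only point requiring care is bookkeeping the swap between the ``flat'' case (condition 4) and the ``step up'' case (condition 5) under dualization, and making sure the complement is taken in $P(R)$ (not $P_+(R)$) so that it matches the definition of $D\vec p_2$.
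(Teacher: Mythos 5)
Your proof is correct and follows essentially the same route as the paper's: reduce to one direction via $DD\vec p=\vec p$, handle codimension-dependence and $D\vec p_1(2)=0$ directly, derive the growth condition from $\vec p_1(k)+D\vec p_1(k)=k-2$, and use the swap of the ``flat'' and ``step up'' cases together with complementation in $P(R)$ to exchange conditions (4) and (5). No gaps; nothing further to add.
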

\begin{proof}
For ease of notation, let $\vec q=D\vec p$. As $DD\vec p=\vec p$, it suffices to show that if $\vec p$ is strongly constrained then so is $\vec q$. Clearly $\vec p$ is a function of codimension if and only if $\vec q$ is. The growth condition only concerns $\vec p_1$ and $\vec q_1$ and is true of classical Goresky-MacPherson perversities; it follows from $\vec p_1(k)+\vec q_1(k)=k-2$. Similarly $\vec p_1(2)=0$ if and only if $\vec q_1(2)=0$. Next, note that 
$\vec p_1(k+1)=\vec p_1(k)+1$ if and only if $\vec q_1(k+1)=\vec q_1(k)$ and $\vec p_1(k+1)=\vec p_1(k)$ if and only if  $\vec q_1(k+1)=\vec q_1(k)+1$. Further, $\vec p_2(k)$ and $\vec q_2(k)$ are complementary sets of primes. So if $\vec p_2(k+1)\supset \vec p_2(k)$ whenever $\vec p_1(k+1)=\vec p_1(k)$ then $\vec q_2(k+1)\subset \vec q_2(k)$ whenever $\vec q_1(k+1)=\vec q_1(k)+1$. Similarly   if $\vec p_2(k+1)\subset \vec p_2(k)$ whenever $\vec p_1(k+1)=\vec p_1(k)+1$ then $\vec q_2(k+1)\supset \vec q_2(k)$ whenever $\vec q_1(k+1)=\vec q_1(k)$.
\end{proof}

\subsection{Support and cosupport axioms for strongly constrained ts-Deligne sheaves}

We can now formulate a version of the Goresky-MacPherson axioms Ax2. We follow more closely the exposition in \cite[Section V.4]{Bo}, which is more detailed than \cite{GM2}. In the following definition we assume $X$ to be a CS set of dimension $n$ with no codimension one strata, that $\vec p$ is a strongly constrained ts-perversity, that $\vec q=D\vec p$, that $X$ is adapted to the ts-coefficient system $\mc E$, and that $\mc E\in {}^{\vec p(2)}D^\heartsuit(\text{Dom}(\mc E))$.

\begin{definition}\label{T: Ax2X}
We say the sheaf complex $\ms S^*$ satisfies the \emph{Axioms TAx2$(X,\vec p, \mc E)$} if 

\begin{enumerate}[label=\alph*., ref=\alph*]
\item\label{A2: bounded} $\ms S^*$ is $X$-clc and it is 
quasi-isomorphic to a complex that is bounded and that is $0$ in negative degrees;
\item \label{A2: coeffs} $\ms S^*|_{U_2}\cong\mc E|_{U_2}$;

\item \label{A2: supp} \begin{enumerate}
\item If  $j>1$ then $\dim\{x\in X\mid C^{\mf p}H^j(\ms S^*_x)\neq 0\}\leq n-\zvec p^{-1}(j,\mf p)$ for all $\mf p\in P_+(R)$.
\item $\dim\{x\in X\mid C^{\mf p}H^1(\ms S^*_x)\neq 0\}\leq n-\zvec p^{-1}(1,\mf p)$ 
 for all $\mf p\in P_+(R)$ such that  $\mf p\notin \vec p_2(2)$. 
\end{enumerate}

\item \label{A2: cosupp} 
\begin{enumerate}
\item If $j<n$ then  $\dim\{x\in X\mid C^{\mf p}H^j(f^!_x\ms S^*)\neq 0\}\leq n-\zvec q^{-1}(n-j+1,\mf p)$ for all $\mf p\in P(R)$ and $\dim\{x\in X\mid C^{0}H^j(f^!_x\ms S^*)\neq 0\}\leq n-\zvec q^{-1}(n-j,0)$. 

\item  $\dim\{x\in X\mid C^{\mf p}H^n(f^!_x\ms S^*)\neq 0\}\leq n-
\zvec q^{-1}(1,\mf p)$ for all $\mf p\in\vec p_2(2)$. 
\end{enumerate}
\end{enumerate}
\end{definition}

Note that in axiom \eqref{A2: supp} we may have $\mf p=0$, but in axiom \eqref{A2: cosupp} the only appearance of $0$ as an element of $P_+(R)$ is explicit as $0$ is not in $\vec p_2(2)$ nor $P(R)$. If $\vec p_2(k)=\emptyset$ for all $k$ then these axioms reduce to those of Borel in \cite[Section 4.7]{Bo}.

\begin{proposition}
The sheaf complex  $\ms S^*$  satisfies  TAx1'$(X,\vec p, \mc E)$ if and only it satisfies TAx2$(X,\vec p, \mc E)$.
\end{proposition}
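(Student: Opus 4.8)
The plan is to show the equivalence \textbf{TAx1$'\Leftrightarrow$TAx2} by working stratum-by-stratum and translating the ``vanishing in a range of degrees'' conditions of TAx1$'$ into the ``dimension of support'' conditions of TAx2, using that the sheaves in question are $X$-clc with finitely generated stalks. The key bookkeeping device is Remark \ref{R: inverse}: for a strongly constrained $\vec p$ and a fixed $\mf p\in P_+(R)$, one has $k\ge \zvec p^{-1}(m,\mf p)$ if and only if ($\vec p_1(k)\ge m$) or ($\vec p_1(k)=m-1$ and $\mf p\in\vec p_2(k)$), and similarly for $\vec q=D\vec p$. This is exactly the arithmetic that converts a statement ``$T^{\mf p}H^j(\ms S^*_x)=0$ for $x$ in all strata of codimension $<\zvec p^{-1}(j,\mf p)$'' into ``$\dim\{x\mid T^{\mf p}H^j(\ms S^*_x)\ne0\}\le n-\zvec p^{-1}(j,\mf p)$'' and back, since for a $\mf X$-clc sheaf the support of $\mc H^j$ is a union of strata, hence its dimension is $n-k$ for $k$ the minimal codimension of a stratum carrying nonzero stalk cohomology.

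Concretely, here are the steps. First I would reduce TAx1$'$\eqref{A': truncate} to support information: for $x$ in a singular stratum $S$ of codimension $k$, ``$H^i(\ms S^*_x)=0$ for $i>\vec p_1(S)+1$ and $H^{\vec p_1(S)+1}(\ms S^*_x)$ is $\vec p_2(S)$-torsion'' is equivalent, via the primary decomposition $H^i=\bigoplus_{\mf p}T^{\mf p}H^i$ over a PID (together with $T^{\mf f}H^i$ for the free part), to: for each $\mf p\in P_+(R)$, $T^{\mf p}H^i(\ms S^*_x)=0$ unless either $i\le\vec p_1(k)$, or $i=\vec p_1(k)+1$ and $\mf p\in\vec p_2(k)$ (here $\mf f\notin\vec p_2(k)$ always, encoding torsion-ness). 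Quantifying over all strata and applying Remark \ref{R: inverse} gives precisely TAx2\eqref{A2: supp}, with the separate treatment of $j=1$ versus $j>1$ forced by the fact that over $U_2=X-X^{n-1}$ we have $\ms S^*\cong\mc E$, so $T^{\mf p}H^1(\ms S^*_x)$ may be nonzero on an $n$-dimensional set precisely when $\mf p\in\vec p_2(2)$ (this is where adaptedness of $\vec p$ to $\mc E$ and $\vec p_1(2)=0$ enter). The base case on $U_2$ itself uses TAx1$'$\eqref{A': coeffs} $=$ TAx2\eqref{A2: coeffs}, which are identical modulo noting $U_1=U_2$ since there are no codimension one strata.

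Next I would handle the cosupport conditions TAx1$'$\eqref{A': attach} $\Leftrightarrow$ TAx2\eqref{A2: cosupp} by the dual argument, using Verdier duality as set up in Section \ref{S: dim const}: $f_x^!\mc P^*_{\vec p}\cong\mc D f_x^*\mc P^*_{D\vec p}[n]$, so by the Universal Coefficient Theorem for Verdier duality (\cite[Section V.7.7]{Bo}) and the finite generation from $X$-clc-ness, $T^{\mf p}H^j(f_x^!\ms S^*)$ is governed by $T^{\mf p'}H^{n-j}(f_x^*\mc P^*_{D\vec p})$ for the appropriate dual prime-set bookkeeping (free part dualizes to free, $\mf p$-torsion to $\mf p$-torsion, with a degree shift by one for the torsion). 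Since $D\vec p$ is strongly constrained by the preceding Lemma, Remark \ref{R: inverse} applies to $\vec q$, and TAx1$'$\eqref{A': attach}(a),(b) — ``$H^i(f_x^!\ms S)=0$ for $i\le\vec p_1(S)+n-k+1$, and $H^{\vec p_1(S)+n-k+2}(f_x^!\ms S)$ is $\vec p_2(S)$-torsion free'' — translate, using $\vec q_1(k)=k-2-\vec p_1(k)$ and $\vec q_2(k)=P(R)-\vec p_2(k)$, into exactly the dimension bounds of TAx2\eqref{A2: cosupp}, again with the $j=n$ case split off because of the behavior of $H^0(\mc E_x)$ over $U_2$ (where adaptedness of $\vec p$ to $\mc E$, specifically $T^{\vec p_2(2)}H^0(\mc E_x)=0$, is exactly what is needed). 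The boundedness/clc conditions \eqref{A2: bounded}$=$\eqref{A': bounded} are verbatim identical.

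The main obstacle I anticipate is not any single hard step but the care required in the boundary degrees $j=1$ and $j=n$ and the precise handling of the formally adjoined symbol $\mf f$: one must verify that the $\vec p_2(2)$-exclusions in TAx2\eqref{A2: supp}(b) and the $\vec p_2(2)$-restrictions in TAx2\eqref{A2: cosupp}(b) correspond exactly to the information content of TAx1$'$ over the regular part $U_2$ together with the hypothesis that $\vec p$ is adapted to $\mc E$, with nothing lost or double-counted — and to confirm that in these degrees the support genuinely can be $n$-dimensional (so the bound is vacuous there) rather than the statement being false. A secondary subtlety is checking that the two-sided implications really are equivalences: the forward direction (TAx1$'\Rightarrow$TAx2) is a direct translation, but the reverse requires that the dimension bounds, holding for \emph{every} prime and every degree simultaneously on a $\mf X$-clc sheaf, pin down the stalkwise vanishing on each individual stratum, which is where one invokes that the cohomology sheaves are locally constant on strata so ``dimension of support $\le n-k$'' forces vanishing on all strata of codimension $<k$. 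Once these pieces are in place the proof is essentially the one in \cite[Section V.4]{Bo} with the prime-set layer grafted on via $\zvec p^{-1}$.
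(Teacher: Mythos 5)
Your treatment of the support axioms (the equivalence of TAx1'\eqref{A': truncate} with TAx2\eqref{A2: supp}) is essentially the paper's argument: translate the stalkwise vanishing/torsion conditions prime-by-prime via Remark \ref{R: inverse}, use $X$-clc-ness so that nonvanishing at one point of a stratum forces nonvanishing on the whole stratum, and isolate the degrees $j=0,1$ where the support can be $n$-dimensional because $\ms S^*|_{U_2}\cong\mc E$ and $\vec p$ is adapted to $\mc E$. That half is fine.

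The cosupport half is where you diverge, and as written it has a genuine gap. You propose to derive TAx1'\eqref{A': attach} $\Leftrightarrow$ TAx2\eqref{A2: cosupp} from Verdier duality, invoking $f_x^!\mc P^*_{\vec p}\cong \mc D f_x^*\mc P^*_{D\vec p}[n]$ together with the Universal Coefficient Theorem and ``finite generation from $X$-clc-ness.'' First, $X$-clc does not give finitely generated stalks (that is $X$-cc), and neither biduality nor the torsion/free bookkeeping in the UCT is licensed without cohomological constructibility, which the axioms as stated do not supply for an arbitrary $\ms S^*$. Second, and more seriously, the displayed duality identity is a statement about ts-Deligne sheaves (\cite[Theorem 4.19]{GBF32}, proved there for pseudomanifolds; in the paper it appears only in the motivational discussion of Section \ref{S: dim const}, simplified to field coefficients). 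In the direction TAx2 $\Rightarrow$ TAx1' you do not yet know that $\ms S^*$ is quasi-isomorphic to any ts-Deligne sheaf, so relating $f_x^!\ms S^*$ to $\mc P^*_{D\vec p}$ is unavailable, and replacing it by the general identity $f_x^!\ms S^*\cong \mc D f_x^*\mc D\ms S^*$ only shifts the problem: you would then have to show that $\mc D\ms S^*[-n]$ satisfies support bounds of type $D\vec p$, which again needs constructibility, a dualized coefficient statement on $U_2$, and is close to circular. The paper avoids all of this: it proves TAx1'\eqref{A': attach} $\Leftrightarrow$ TAx2\eqref{A2: cosupp} directly, with $\vec q=D\vec p$ entering only as arithmetic bookkeeping (rewriting $j\geq \vec p_1(k)+n-k+3$ as $j\geq n-\vec q_1(k)+1$, etc., and applying Remark \ref{R: inverse} to $\vec q$), using that $j_k^!\ms S^*$ is clc by \cite[Proposition 4.0.2.3]{Sch03} for the reverse implication, and handling the boundary case $j=n=\vec p_1(k)+n-k+2$, $\mf p\in\vec p_2(k)$ by the second clause of TAx2\eqref{A2: cosupp} together with adaptedness of $\vec p$ to $\mc E$. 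If you want to keep a duality-flavored proof you must either add cc-ness hypotheses and a duality theorem valid in this CS-set, PID, ts-coefficient setting, or, as the paper does, argue the cosupport conditions directly.
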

\begin{proof}
The proof emulates that of \cite[Proposition V.4.9]{Bo}, though it is a bit more complicated since we must consider the torsion effects and also  take more care with some special cases when the degree $j$ is near $0$ or $n$. We label codimension by $k$ and assume $2\leq k\leq n$ throughout, since $X$ has no codimension one strata by assumption.
The first two axioms of  TAx1' and  TAx2 agree, so we will show that the two third axioms and the two fourth axioms are equivalent given the hypotheses. 

\paragraph{$(1'c\Rightarrow 2c)$.} First suppose $\ms S^*$ satisfies TAx1'\ref{A': truncate}. We first observe that it is possible to have $\dim\{x\in X\mid C^{\mf p}H^j(\ms S^*_x)\neq 0\}=n$ if $j=0$ or if $j=1$ and $\mf p\in \vec p_2(2)$ since $\ms S^*|_{U_2}\cong \mc E$ and these properties are true of $\mc E$. However, also thanks to the properties of $\mc E$, these are the only cases for which  $\dim\{x\in X\mid C^{\mf p}H^j(\ms S^*_x)\neq 0\}=n$. In all other cases, $C^{\mf p}H^j(\ms S^*_x)$ is supported in $X^{n-2}$. So consider these other cases, i.e.\ either $j=1$ and $\mf p\notin \vec p_2(2)$ or $j>1$. If $x\in X_{n-k}$ for $k\geq 2$ and $C^{\mf p}H^j(\ms S^*_x)\neq 0$   then by TAx1'\ref{A': truncate} either $j\leq \vec p_1(k)$ or we have $j=\vec p_1(k)+1$ and $\mf p\in \vec p_2(k)$. By Remark \ref{R: inverse}, this implies $k\geq \zvec p^{-1}(j,\mf p)$. Then $\dim X_{n-k}\leq n-k\leq n-p^{-1}(j,\mf p)$. This yields TAx2\ref{A2: supp}.

\paragraph{$(1'c\Leftarrow 2c)$.} Conversely, suppose TAx2\ref{A2: supp} holds.
 Now fix $k\geq 2$ and $x\in X_{n-k}$. We must show that $C^{\mf p}H^j(\ms S^*_x)=0$ if $j>\vec p_1(k)+1$ or if we have $j=\vec p_1(k)+1$ and $\mf p\in P_+(R)-\vec p_2(k)$. So first suppose $j\geq \vec p_1(k)+2\geq 2$, the last inequality by the assumptions on $\vec p$.  Then $\zvec p^{-1}(j,\mf p)>k$ using Remark \ref{R: inverse}. Similarly Remark \ref{R: inverse} implies that if  $j=\vec p_1(k)+1\geq 1$ and $\mf p\notin \vec p_2(k)$  
then $\zvec p^{-1}(j,\mf p)>k$. 
 We also note that  since $\vec p_2(2)\subset \vec p_2(k)$ for all $k$ such that $\vec p_1(k)=0$, if $\vec p_1(k)=0$ and $\mf p\notin \vec p_2(k)$, then $\mf p\notin\vec p_2(2)$. So if $j\geq \vec p_1(k)+2$ or if we have $j=\vec p_1(k)+1\geq 1$ and $\mf p\notin \vec p_2(k)$ then either $j\geq 2$ or $j=1$ with $\mf p\notin \vec p_2(2)$. In either case the assumptions say that   $\dim\{x\in X\mid C^{\mf p}H^j(\ms S^*_x)\neq 0\}\leq n-\zvec p^{-1}(j,\mf p)<n-k$. Since $\ms S^*$ is $X$-clc, if $C^{\mf p}H^j(\ms S^*_x)\neq 0$ then also $C^{\mf p}H^j(\ms S^*_y)\neq 0$ for all $y$ in the $n-k$ dimensional stratum containing $x$. Hence we must have in these cases  $C^{\mf p}H^j(\ms S^*_x)= 0$. This implies  
 TAx1'\ref{A': truncate}.

\paragraph{$(1'd\Rightarrow 2d)$.} 
Next suppose $\ms S^*$ satisfies TAx1'\ref{A': attach}. Since $\ms S^*|_{U_2}\cong \mc E$ and since $f^!_x=f^*[-n]$ on $U_2$, we have for $x\in U_2$ that $C^{\mf p}H^j(f_x^!\ms S^*)\neq 0$ only for $j=n,n+1$ and furthermore that   $C^{\mf p}H^n(f_x^!\ms S^*)=0$ if $\mf p\in \vec p_2(2)$. So for $j<n$ or for $j=n$ and $\mf p\in \vec p_2(2)$, the dimension $\dim\{x\in X\mid C^{\mf p}H^j(f^!_x\ms S^*)\neq 0\}$ is determined entirely by points in $X^{n-2}$.

So suppose $x\in X_{n-k}$ for $k\geq 2$  and that $j<n$ or that  $j=n$ and $\mf p\in \vec p_2(2)$. If $C^{\mf p}H^j(f_x^!\ms S^*)\neq 0$ then by assumption either

\begin{enumerate}
\item $j\geq \vec p_1(k)+n-k+3=n-\vec q_1(k)+1$, or
\item $j= \vec p_1(k)+n-k+2=n-\vec q_1(k)$ and  $\mf p\in \vec q_2(k)\cup \{0\}$. 
\end{enumerate}
In the first scenario we can conclude by Remark \ref{R: inverse} that $k\geq \zvec q^{-1}(n-j+1,\mf p)$ while the second scenario gives us $k\geq \zvec q^{-1}(n-j+1,\mf p)$ if $\mf p\in \vec q_2(k)$ and $k\geq \zvec q^{-1}(n-j,0)$ if $\mf p=0$. Note that in either case\footnote{We remark that if $\vec p_2(k)=\emptyset$ for all $k$, which corresponds to $\ms S^*$ satisfying the original Goresky-MacPherson axioms, then $\vec q_2(k)=P(R)$ for all $k$. In this case $q^{-1}(n-j+1,\mf p)=q^{-1}(n-j,0)$ by Remark \ref{R: empty}, which is again consistent with the expectation from the classical case.} if $\mf p\neq 0$ then we conclude  $k\geq \zvec q^{-1}(n-j+1,\mf p)$, though of course the value of $q^{-1}(n-j+1,\mf p)$ can depend on $\mf p$. So if $\mf p\neq 0$, then $\dim(X_{n-k})\leq n-k\leq n-\zvec q^{-1}(n-j+1,\mf p)$ and so 
$\dim\{x\in X\mid C^{\mf p}H^j(f^!_x\ms S^*)\neq 0\}\leq n-\zvec q^{-1}(n-j+1,\mf p)$. Similarly, if $\mf p=0$ we obtain $\dim\{x\in X\mid C^{0}H^j(f^!_x\ms S^*)\neq 0\}\leq n-\zvec q^{-1}(n-j,0)$.
So we have TAx2\ref{A2: cosupp}.

\paragraph{$(1'd\Leftarrow 2d)$.} Finally, suppose  TAx2\ref{A2: supp} holds. If $n\leq j\leq \vec p_1(k)+n-k+1$, then $k-1\leq \vec p_1(k)$, which is impossible. So in considering the first condition of TAx1'\ref{A': attach} we may assume $j<n$. Similarly, $n\leq j= \vec p_1(k)+n-k+2$ implies $k-2\leq \vec p_1(k)$, which is possible only when we have equalities and so $j=n$. 
Thus for the second condition of TAx1'\ref{A': attach} we may consider only $j\leq n$.

First suppose $j<n$. 
 Since $\ms S^*$ is $X$-clc (by either set of axioms), $j^!_k\ms S^*$ is clc by \cite[Proposition 4.0.2.3]{Sch03}, so if $C^{\mf p}H^j(f_x^!\ms S^*)\neq 0$ for some $x\in X_{n-k}$ then the same is true for all other points $X_{n-k}$. Thus for $\mf p\in P(R)$, if $x\in X_{n-k}$ and $C^{\mf p}H^j(f_x^!\ms S^*)\neq 0$ then $n-k\leq n-\zvec q^{-1}(n-j+1,\mf p)$, so $k\geq \zvec q^{-1}(n-j+1,\mf p)$. Thus by Remark \ref{R: inverse} either $\vec q_1(k)\geq n-j+1$ or 
$\vec q_1(k)=n-j$ and $\mf p\in \vec q_2(k)$. This translates to $j\geq \vec p_1(k)+n-k+3$ or $j= \vec p_1(k)+n-k+2$ and $\mf p\notin \vec p_2(k)$. Similarly, if $\mf p=0$, the assumptions imply $n-k\leq  n-\zvec q^{-1}(n-j,0)$ or $k\geq \zvec q^{-1}(n-j,0)$, which means that $\vec q_1(k)\geq n-j$. This translates to  
$j\geq \vec p_1(k)+n-k+2$. So, altogether, if $C^{\mf p}H^j(f_x^!\ms S^*)\neq 0$ then $j\geq \vec p_1(k)+n-k+2$ and if $j=\vec p_1(k)+n-k+2$ then $\mf p\notin \vec p_2(k)$. This is TAx1'\ref{A': attach}.

Now suppose $x\in X_{n-k}$,  $j=n=\vec p_1(k)+n-k+2$, and  $\mf p\in \vec p_2(k)$. We must show that $C^{\mf p}H^n(f_x^!\mc S^*)=0$.  In this case $\vec p_1(k)=k-2$ which is possible only if $\vec p_1(k)=k-2$ up through codimension $k$. So $\vec q_1$ must be $\bar 0$ up through $k$. In this case the hypotheses on constrained ts-perversities imply that $\vec p_2(k)\subset \vec p_2(c)\subset \vec p_2(2)$ for all $2\leq c\leq k$, so in particular we may use the second condition of TAx2\ref{A2: cosupp}. Further, since $\mf p\in\vec p_2(c)$ for all $2\leq c\leq k$ then $\mf p\notin \vec q_2(c)$ for all $2\leq c\leq k$ and our hypotheses imply $n-j+1=1$. Consequently, $\zvec q^{-1}(n-j+1,\mf p)=\zvec q^{-1}(1,\mf p)>k$. So  TAx2\ref{A2: cosupp} implies that  $\dim\{C^{\mf p}H^n(f_x^!\mc S^*)\neq0\}<n-k$. So $C^{\mf p}H^n(f_x^!\mc S^*)=0$ as needed. 

Altogether this shows TAx1.\ref{A': attach}.
\end{proof}

\begin{corollary}\label{C: all axioms}
Suppose $\vec p$ is a strongly constrained ts-perversity, $X$ is a CS set without codimension one strata, $X$ is adapted to the ts-coefficient system $\mc E$, and  $\mc E\in {}^{\vec p(2)}D^\heartsuit(\text{Dom}(\mc E))$. Then $\ms S^*$ satisfies TAx1$(X, \vec p, \mc E)$ if and only if it satisfies  TAx1'$(X, \vec p, \mc E)$ if and only if it satisfies TAx2$(X, \vec p, \mc E)$. Any of these axioms characterize $\ms S^*$ uniquely up to isomorphism as the ts-Deligne sheaf $\mc P^*_{X,\vec p,\mc E}$.
\end{corollary}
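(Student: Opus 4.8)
The plan is to assemble the corollary from three facts already established, with no new analytic work. First, the equivalence of TAx1$(X,\vec p,\mc E)$ and TAx1'$(X,\vec p,\mc E)$ is exactly Theorem \ref{T: ax equiv}, which holds on an arbitrary CS set with no extra hypotheses on $\vec p$ or $\mc E$; so that biconditional is immediate. Second, the equivalence of TAx1'$(X,\vec p,\mc E)$ and TAx2$(X,\vec p,\mc E)$ is the Proposition proved just above, whose standing hypotheses --- $X$ a CS set of dimension $n$ with no codimension one strata, $\vec p$ a strongly constrained ts-perversity, and both $X$ and $\vec p$ adapted to $\mc E$ --- are precisely those of the corollary. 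Chaining the two biconditionals gives that $\ms S^*$ satisfies one of the three axiom systems if and only if it satisfies all three.

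For the characterization statement, I would invoke Theorem 4.8 of \cite{GBF32} in the form recalled for CS sets after the definition of TAx1: the ts-Deligne sheaf $\mc P^*_{X,\vec p,\mc E}$ satisfies TAx1$(X,\vec p,\mc E)$, and conversely any sheaf complex satisfying TAx1$(X,\vec p,\mc E)$ is quasi-isomorphic to $\mc P^*_{X,\vec p,\mc E}$. Combining this with the equivalences from the previous step, a sheaf complex satisfies TAx1, or TAx1', or TAx2 if and only if it is quasi-isomorphic to $\mc P^*_{X,\vec p,\mc E}$; since we work in the derived category, where $\cong$ denotes quasi-isomorphism, this is the asserted uniqueness up to isomorphism.

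I do not expect a genuine obstacle here: the corollary is essentially a bookkeeping statement, and all of the real content --- the tracking of the torsion submodules $T^{\mf p}$ and the special handling of degrees $j$ near $0$ and near $n$ --- already resides in the proof of the preceding Proposition and in \cite{GBF32}. The one point deserving a line of care is to confirm that the hypotheses line up exactly across the corollary, the preceding Proposition, and the CS-set form of Theorem 4.8 of \cite{GBF32} (in particular that ``strongly constrained and adapted to $\mc E$'' is precisely what is needed to invoke the TAx1'$\Leftrightarrow$TAx2 equivalence, and that no circularity is introduced), after which the proof is simply ``combine the cited results.''
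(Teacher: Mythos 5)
Your argument is exactly the paper's: the corollary is proved by combining the preceding Proposition (TAx1'$\Leftrightarrow$TAx2), Theorem \ref{T: ax equiv} (TAx1$\Leftrightarrow$TAx1'), and the CS-set form of \cite[Theorem 4.8]{GBF32} for the characterization of $\mc P^*_{X,\vec p,\mc E}$. No gaps; the hypotheses match as you checked.
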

\begin{proof}
This follows directly from the preceding proposition, Theorem \ref{T: ax equiv}, and \cite[Theorem 4.8]{GBF32}. 
\end{proof}

\begin{definition}\label{T: Ax2X'}
Let $|X|$ be a space, let $\mc E$ be a maximal  ts-coefficient system on $|X|$, and let $\vec p$ be a strongly constrained perversity with  $\mc E\in {}^{\vec p(2)}D^\heartsuit(\text{Dom}(\mc E))$.  We say $\ms S^*$ satisfies the \emph{Axioms TAx2'$(\vec p, \mc E)$} if 

\begin{enumerate}[label=\alph*., ref=\alph*]
\item\label{A2': bounded} $\ms S^*$ is  
quasi-isomorphic to a complex that is bounded and that is $0$ in negative degrees;

\item \label{A2': coeffs} $\ms S^*$ is $X$-clc for some CS set stratification $X$ of $|X|$ without codimension one strata that is adapted to $\mc E$, and 
$\ms S^*|_{U_2}\cong\mc E|_{U_2}$;

\item \label{A2': supp} \begin{enumerate}
\item If  $j>1$ then $\dim\{x\in |X|\mid C^{\mf p}H^j(\ms S^*_x)\neq 0\}\leq n-\zvec p^{-1}(j,\mf p)$ for all $\mf p\in P_+(R)$.
\item $\dim\{x\in |X|\mid C^{\mf p}H^1(\ms S^*_x)\neq 0\}\leq n-\zvec p^{-1}(1,\mf p)$ 
 for all $\mf p\in P_+(R)$ such that  $\mf p\notin \vec p_2(2)$. 
\end{enumerate}

\item \label{A2': cosupp} 
\begin{enumerate}
\item If $j<n$ then  $\dim\{x\in |X|\mid C^{\mf p}H^j(f^!_x\ms S^*)\neq 0\}\leq n-\zvec q^{-1}(n-j+1,\mf p)$ for all $\mf p\in P(R)$ and $\dim\{x\in |X|\mid C^{0}H^j(f^!_x\ms S^*)\neq 0\}\leq n-\zvec q^{-1}(n-j,0)$. 

\item  $\dim\{x\in |X|\mid C^{\mf p}H^n(f^!_x\ms S^*)\neq 0\}\leq n-
\zvec q^{-1}(1,\mf p)$ for all $\mf p\in\vec p_2(2)$. 
\end{enumerate}

\end{enumerate}
\end{definition}

Our Axioms TAx2'$(\vec p, \mc E)$ are slightly different from the Axioms (Ax2)${}_{\mc E}$ of \cite[Section 4.13]{Bo} even beyond the incorporation of torsion information and the generalization to CS sets. As observed in \cite[Remark V.4.14.b]{Bo}, the axioms there do not assume any relation between the stratification of $X$ and the coefficient system $\mc E$ as we have done in the second axiom. However, it is also observed in this remark that (in that setting), a sheaf complex satisfies (Ax2)${}_{\mc E}$ if and only if it satisfies (Ax2)${}_{X,\mc E}$ for some stratification  (in Borel's case a pseudomanifold stratification) adapted to $\mc E$. Our  Axioms TAx2'$(\vec p, \mc E)$ are therefore a bit less general than Borel's Axioms (Ax2)${}_{\mc E}$ in this sense, though as in Section \ref{S: maximal E} we can adapt Borel's remark if each $\mc E^i$ is a local system and $\mc E^i=0$ for sufficiently large $|i|$. In this case we need not assume $X$ adapted to $\mc E$ in the second axiom.

In either case, the upshot is the same:  a sheaf complex satisfies our TAx2'$(\vec p, \mc E)$ if and only if it satisfies TAx2$(X, \vec p, \mc E)$ for some stratification $X$ of $|X|$ (adapted to $\mc E$); furthermore the axioms TAx2'$(\vec p, \mc E)$ are stratification independent. Putting this together with our prior results we obtain a torsion sensitive analogue of \cite[Theorem V.4.15]{Bo}:

\begin{theorem}\label{T: constrained2}
Suppose  $\mc E$ is a maximal  ts-coefficient system with domain $U_{\mc E}$ on a space $|X|$, that $\vec p$ is a strongly constrained perversity, and that  $\mc E\in {}^{\vec p(2)}D^\heartsuit(U_{\mc E})$. Suppose $X$ has a CS set stratification with no codimension one strata that is fully adapted to $\mc E$. Then there is a sheaf complex $\mc P^*$ satisfying TAx2'$(\vec p, \mc E)$ with $\mc P^*|_{U_{\mc E}}\cong \mc E$ and such that $\mc P^*$ satisfies TAx2$(X,\vec p, \mc E)$ for every CS set stratification $X$ of $|X|$ without codimension one strata that is fully adapted to $\mc E$. 
\end{theorem}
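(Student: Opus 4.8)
The plan is to construct $\mc P^*$ as the ts-Deligne sheaf with respect to \emph{some} fixed CS set stratification $X_0$ of $|X|$ that is fully adapted to $\mc E$ and has no codimension one strata (such an $X_0$ exists by hypothesis), and then use the previously established topological invariance to show it works for every other such stratification. So first I would set $\mc P^* = \mc P^*_{X_0,\vec p,\mc E}$. By Corollary \ref{C: all axioms}, $\mc P^*$ satisfies TAx2$(X_0,\vec p,\mc E)$ (here $\vec p$ as a constrained ts-perversity is pushed forward to every stratification in the sense of Remark \ref{R: pushforward abuse}, so it makes sense on all of them simultaneously). Also $\mc P^*|_{U_{\mc E}}\cong \mc E$: on $X_0$ we have $\mc P^*|_{U_2}\cong\mc E|_{U_2}$ by the axioms, and since $X_0$ is fully adapted, $U_{\mc E}$ differs from $U_2$ only by strata of $X_0$ on which $\mc P^*$ is still just $\mc E$ — more carefully, TAx1b gives $\mc P^*|_{X_0-X_0^{n-1}}\cong\mc E|_{X_0-X_0^{n-1}}$, and since $\mc E$ is maximal with $U_{\mc E}$ a union of strata of $X_0$ equal to $X_0 - X_0^{n-1}$ (full adaptedness, no codim one strata so $U_{\mc E}=X_0-X_0^{n-1}$), we get $\mc P^*|_{U_{\mc E}}\cong\mc E$.

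Next, the main step: given any CS set stratification $X$ of $|X|$ without codimension one strata that is fully adapted to $\mc E$, I must show $\mc P^*$ satisfies TAx2$(X,\vec p,\mc E)$. By Corollary \ref{C: all axioms} again, it is enough to show $\mc P^*\cong\mc P^*_{X,\vec p,\mc E}$. This is exactly what Theorem \ref{T: constrained} delivers: both $X_0$ and $X$ are CS set stratifications of the same space, both fully adapted to the maximal ts-coefficient system $\mc E$, $\vec p$ is a (strongly, hence a fortiori) constrained ts-perversity adapted to $\mc E$, and the closures of the unions of their codimension one strata are both empty, hence equal. Therefore $\mc P^*_{X_0,\vec p,\mc E}\cong \mc P^*_{X,\vec p,\mc E}$, i.e.\ $\mc P^*\cong\mc P^*_{X,\vec p,\mc E}$, and so $\mc P^*$ satisfies TAx2$(X,\vec p,\mc E)$.

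Finally, I would record that $\mc P^*$ satisfies the stratification-independent axioms TAx2'$(\vec p,\mc E)$: this follows because it satisfies TAx2$(X_0,\vec p,\mc E)$ for the particular stratification $X_0$, and — as discussed in the paragraph preceding the theorem — a sheaf complex satisfies TAx2'$(\vec p,\mc E)$ if and only if it satisfies TAx2$(X,\vec p,\mc E)$ for some CS set stratification adapted to $\mc E$; here one uses either Borel's argument as adapted in Section \ref{S: maximal E} (if the $\mc E^i$ are local systems) or simply the already-verified fact that $\mc P^*$ satisfies TAx2$(X,\vec p,\mc E)$ for \emph{every} admissible $X$, which trivially implies the "for some" condition in TAx2'. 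I do not expect a serious obstacle here; the one point requiring a little care is the bookkeeping in the first paragraph identifying $U_{\mc E}$ with $X-X^{n-1}$ for fully adapted stratifications and checking $\mc P^*|_{U_{\mc E}}\cong\mc E$, but this is immediate from Definitions \ref{D: maximal} and \ref{D: fully} together with TAx1b once one notes that with no codimension one strata, adaptedness of $X$ to $\mc E$ forces $X-X^{n-1}\subset U_{\mc E}$ and full adaptedness forces the reverse containment up to the stratum structure. The substantive content is entirely carried by Theorem \ref{T: constrained}, so the proof is short.
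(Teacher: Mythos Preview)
Your approach is essentially the same as the paper's: build $\mc P^*$ as a ts-Deligne sheaf for one stratification and then invoke the invariance results (the paper uses Proposition \ref{P: constrained}, you use Theorem \ref{T: constrained}, which is the same thing one level up). The one genuine slip is your claim that $U_{\mc E}=X_0-X_0^{n-1}$ for an arbitrary fully adapted $X_0$. Full adaptedness (Definition \ref{D: fully}) only says $X_0-X_0^{n-1}\subset U_{\mc E}$ and that $U_{\mc E}$ is a \emph{union of strata}; it does not prevent $U_{\mc E}$ from containing singular strata of $X_0$ (e.g.\ take $|X|=\R^n$, $\mc E$ constant, and $X_0$ the stratification $\R^n\supset \R^{n-2}$). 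So TAx1b for $X_0$ alone does not give $\mc P^*|_{U_{\mc E}}\cong \mc E$.

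The paper sidesteps this by taking $X_0$ to be the intrinsic stratification $\mf X=\mf X_{\mc E,\emptyset}$ from Proposition \ref{P: intrinsic}, which has the special property $\mf X-\mf X^{n-1}=U_{\mc E}$ (Property \ref{I: int3} with $C=\emptyset$), making $\mc P^*|_{U_{\mc E}}\cong\mc E$ immediate from TAx1b. Your argument is easily repaired the same way: either start with $\mf X$ instead of an arbitrary $X_0$, or keep your $X_0$ and note that since you have already shown $\mc P^*\cong\mc P^*_{\mf X,\vec p,\mc E}$ via Theorem \ref{T: constrained}, the restriction identity follows from the one for $\mf X$. Everything else you wrote is correct.
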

\begin{proof}
As  $X$ has a CS set stratification with no codimension one strata that is fully adapted to $\mc E$, there is an intrinsic stratification $\mf X$ fully adapted to $\mc E$ by Proposition \ref{P: intrinsic}. Let $\mc P^*$  be the ts-Deligne sheaf with respect to $\mf X$. Then $\mc P^*|_{U_{\mc E}}\cong \mc E$ since $\mf X-\mf X^{n-2}=\mf X-\mf X^{n-1}=U_{\mc E}$ by Proposition \ref{P: intrinsic}. Proposition \ref{P: constrained} implies $\mc P^*$ is quasi-isomorphic to the ts-Deligne sheaves coming from any of the other stratifications, and we know these satisfy the axioms by Corollary \ref{C: all axioms}. 
\end{proof}

\subsection{Support and cosupport axioms for weakly constrained ts-Deligne sheaves}\label{S: weak}

Analogously to the Goresky-MacPherson axioms Ax2, our Axioms TAx2  depend only very weakly on the stratification: TAx2 only mentions a particular stratification to specify that it is adapted to the coefficients, that $\mc P^*$ is $X$-clc, and that $\mc P^*\cong \mc E$ over the regular strata. TAx2' only asks for this with respect to \emph{some} stratification. Consequently we obtain our version of topological invariance in Theorem \ref{T: constrained2}.

In Section \ref{S: dim const} we argued that in order for the support and cosupport axioms to imply our earlier TAx1' axioms it is necessary to use constrained perversities with  $\mc E\in {}^{\vec p(2)}D^\heartsuit(\text{Dom}(\mc E))$ and to forbid codimension one strata. However, it is possible to avoid all of these constraints except for the Goresky-MacPherson growth condition at the expense of modifying the TAx2 axioms to depend more heavily on the stratification. In fact, we can obtain Theorem \ref{T: constrainedS}, below, which generalizes the main theorem of \cite{GBF11}. The proofs are all analogous to those above, though in fact simpler since special care no longer needs to be taken in extreme degrees. 

For the following, we let $\vec p$ be a weakly constrained perversity with domain $\Z_{\geq 1}$. We can then extend $\zvec p^{-1}$ to be a function $\Z\times P_+(R)\to \Z_{\geq 1}$ by declaring that if $m<\vec p_1(1)$ then $\zvec p^{-1}(m,\mf p)=1$.

\begin{definition}\label{T: Ax2XS}
Let $X$ be a CS set (possibly with codimension one strata) adapted to a ts-coefficient system $\mc E$. Let $\vec p$ be a weakly constrained ts-perversity. We say the sheaf complex $\ms S^*$ satisfies the \emph{Axioms TAx2$(X,\vec p, \mc E,X^{n-1})$} if 

\begin{enumerate}[label=\alph*., ref=\alph*]
\item $\ms S^*$ is $X$-clc and it is 
quasi-isomorphic to a complex that is bounded and that is $0$ in negative degrees;
\item  $\ms S^*|_{U_1}\cong\mc E|_{U_1}$;

\item $\dim\{x\in X^{n-1}\mid C^{\mf p}H^j(\ms S^*_x)\neq 0\}\leq n-\zvec p^{-1}(j,\mf p)$ for all $\mf p\in P_+(R)$.

\item  $\dim\{x\in X^{n-1}\mid C^{\mf p}H^j(f^!_x\ms S^*)\neq 0\}\leq n-\zvec q^{-1}(n-j+1,\mf p)$ for all $\mf p\in P(R)$ and $\dim\{x\in X^{n-1}\mid C^{0}H^j(f^!_x\ms S^*)\neq 0\}\leq n-\zvec q^{-1}(n-j,0)$. 

\end{enumerate}
\end{definition}

\begin{proposition}
Let $\vec p$ be a weakly constrained perversity, and suppose $X$ is a CS set, possibly with codimension one strata, adapted to the  ts-coefficient system $\mc E$. Then the sheaf complex  $\ms S^*$  satisfies  TAx1'$(X,\vec p, \mc E)$ if and only it satisfies TAx2$(X,\vec p, \mc E,X^{n-1})$.
\end{proposition}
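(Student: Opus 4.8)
The plan is to mirror the proof of the strongly constrained proposition above, which in turn follows \cite[Proposition V.4.9]{Bo}, while exploiting the fact that the support and cosupport conditions of TAx2$(X,\vec p,\mc E,X^{n-1})$ constrain only stalks and costalks at points of $X^{n-1}$. This is precisely what makes the weakly constrained case \emph{easier} than the strongly constrained one: there is no need to disentangle the degree-$0$ and degree-$1$ contributions of $\mc E$ over $U_1$, and no special handling of the extreme degrees near $0$ and $n$ is required. First I would note that the first two axioms of TAx1'$(X,\vec p,\mc E)$ and of TAx2$(X,\vec p,\mc E,X^{n-1})$ literally coincide (both require that $\ms S^*$ be $X$-clc, bounded, and $0$ in negative degrees, with $\ms S^*|_{U_1}\cong\mc E|_{U_1}$), so that the work is to show that the truncation axiom TAx1'\ref{A': truncate} is equivalent to the support axiom and that the attaching axiom TAx1'\ref{A': attach} is equivalent to the cosupport axiom.

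Two preliminary facts would be recorded. First, that $\vec q:=D\vec p$ is again weakly constrained whenever $\vec p$ is; the proof is that of the corresponding lemma in the strongly constrained case with the clause about $\vec p_1(2)$ simply deleted, since weak constrainedness imposes no hypothesis on $\vec p_1(2)$. Second, the analogue of Remark \ref{R: inverse} for weakly constrained perversities: for $k\geq 1$ and $\mf p\in P_+(R)$ one has $k\geq\zvec p^{-1}(m,\mf p)$ if and only if either $\vec p_1(k)\geq m$, or $\vec p_1(k)=m-1$ and $\mf p\in\vec p_2(k)$; this uses only the Goresky--MacPherson growth condition and the monotonicity of $\vec p_2$ on constant and on strictly increasing runs of $\vec p_1$, and it is made to cover the boundary $k=1$ by the extension convention $\zvec p^{-1}(m,\mf p)=1$ for $m<\vec p_1(1)$. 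The same characterization then applies to $\vec q$. Throughout, $k$ ranges over $k\geq1$ since codimension one strata are now permitted, and I would use repeatedly that every singular stratum of $X$ lies in $X^{n-1}$, so that TAx1'\ref{A': truncate} and TAx1'\ref{A': attach} are precisely assertions about the points of $X^{n-1}$.

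For the support equivalence: in one direction, if $x\in S\subset X_{n-k}$ and $T^{\mf p}H^j(\ms S^*_x)\neq 0$, then TAx1'\ref{A': truncate} gives $j\leq\vec p_1(k)$, or $j=\vec p_1(k)+1$ with $\mf p\in\vec p_2(k)$, hence $k\geq\zvec p^{-1}(j,\mf p)$ and $\dim S\leq n-k\leq n-\zvec p^{-1}(j,\mf p)$; unioning over the singular strata yields the support bound on $X^{n-1}$. In the other direction, if the support bound holds and $j>\vec p_1(k)+1$, or $j=\vec p_1(k)+1$ with $\mf p\notin\vec p_2(k)$, then $\zvec p^{-1}(j,\mf p)>k$, so a nonzero $T^{\mf p}H^j(\ms S^*_x)$ at some $x\in X_{n-k}$ would (since $\ms S^*$ is $X$-clc, and nonvanishing then propagates along the stratum) contradict the bound; hence TAx1'\ref{A': truncate}. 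For the cosupport equivalence I would run the same argument with $\vec q$ in place of $\vec p$, using that $f^!_x\ms S^*$ is clc along each stratum because $j^!_k\ms S^*$ is clc by \cite[Proposition 4.0.2.3]{Sch03}, and using the identity $\vec p_1(k)+\vec q_1(k)=k-2$ to rewrite TAx1'\ref{A': attach} as: $H^j(f^!_x\ms S^*)=0$ for $j\leq n-\vec q_1(k)-1$, while the $\mf p$-torsion of $H^{n-\vec q_1(k)}(f^!_x\ms S^*)$ vanishes for $\mf p\in\vec p_2(k)$, equivalently for $\mf p\notin\vec q_2(k)\cup\{\mf f\}$; applying the inverse-function characterization of $\vec q$ with $m=n-j+1$ (and with $m=n-j$, $\mf p=\mf f$, for the torsion-free factor) then converts this precisely into the cosupport bound, and conversely.

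The main obstacle, such as it is, will be the routine but fiddly bookkeeping: verifying that the duality lemma and the inverse-function characterization survive the weakening --- in particular near the boundary $k=1$, where the extension convention on $\zvec p^{-1}$ and $\zvec q^{-1}$ must be handled consistently --- and carrying out the degree translation via $\vec p_1(k)+\vec q_1(k)=k-2$ between the attaching bounds and the $\vec q$-indexed cosupport bounds. There is no genuine difficulty here: restricting the support and cosupport conditions to $X^{n-1}$ is exactly what eliminates the delicate extreme-degree case analysis of the strongly constrained argument, so the weakly constrained statement should follow by a direct transcription of that proof.
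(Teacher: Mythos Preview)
Your proposal is correct and takes essentially the same approach the paper intends: the paper does not write out this proof, remarking only that ``the proofs are all analogous to those above, though in fact simpler since special care no longer needs to be taken in extreme degrees,'' and your sketch is precisely that analogous-but-simpler transcription of the strongly constrained argument.
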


\begin{definition}\label{T: Ax2X'S}
Let $|X|$ be a space, $\Sigma$ a closed subspace,  $\vec p$ a weakly constrained perversity, and $\mc E$ any maximal ts-coefficient system on $|X|$.  We say $\ms S^*$ satisfies the \emph{Axioms TAx2'$(\vec p, \mc E,\Sigma)$} if 

\begin{enumerate}[label=\alph*., ref=\alph*]
\item\label{A2': boundedS} $\ms S^*$ is  
quasi-isomorphic to a complex that is bounded and that is $0$ in negative degrees;

\item \label{A2': coeffsS} $\ms S^*$ is $X$-clc for some CS set stratification $X$ of $|X|$ (possibly with codimension one strata) such that $X^{n-1}=\Sigma$ and $X$ that is adapted to $\mc E$, and 
$\ms S^*|_{U_1}\cong\mc E|_{U_1}$;

\item $\dim\{x\in \Sigma\mid C^{\mf p}H^j(\ms S^*_x)\neq 0\}\leq n-\zvec p^{-1}(j,\mf p)$ for all $\mf p\in P_+(R)$.

\item $\dim\{x\in \Sigma \mid C^{\mf p}H^j(f^!_x\ms S^*)\neq 0\}\leq n-\zvec q^{-1}(n-j+1,\mf p)$ for all $\mf p\in P(R)$ and $\dim\{x\in \Sigma\mid C^{0}H^j(f^!_x\ms S^*)\neq 0\}\leq n-\zvec q^{-1}(n-j,0)$.

\end{enumerate}
\end{definition}

\begin{theorem}\label{T: constrainedS}
Suppose $\vec p$ is a weakly constrained perversity and that $\mc E$ is a maximal ts-coefficient system with domain $U_{\mc E}$ on a space $|X|$ with closed subspace $\Sigma$. Suppose $|X|$ has a CS set stratification $X$ that is fully adapted to $\mc E$ and such that $X^{n-1}=\Sigma$. Then there is a sheaf complex $\mc P^*$ satisfying TAx2'$(\vec p, \mc E,\Sigma)$ with $\mc P^*|_{U_{\mc E}-\Sigma}\cong \mc E|_{U_{\mc E}-\Sigma}$ and such that $\mc P^*$ satisfies TAx2$(X,\vec p, \mc E,\Sigma)$ for every CS set stratification $X$ with $X^{n-1}=\Sigma$  that is fully adapted to $\mc E$. 
\end{theorem}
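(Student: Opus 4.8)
The plan is to follow the proof of Theorem~\ref{T: constrained2} almost verbatim, substituting the relative intrinsic stratification $\mf X_{\mc E,\Sigma}$ for the absolute one used there. First I would invoke Proposition~\ref{P: intrinsic} of Section~\ref{S: intrinsic}: since $|X|$ carries a CS set stratification that is fully adapted to $\mc E$ with $(n-1)$-skeleton equal to $\Sigma$, there is an intrinsic stratification $\mf X:=\mf X_{\mc E,\Sigma}$ of $|X|$ that is fully adapted to $\mc E$, satisfies $\mf X^{n-1}=\Sigma$, and coarsens every CS set stratification $X'$ of $|X|$ that is fully adapted to $\mc E$ and has $(X')^{n-1}=\Sigma$. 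As in the proof of Theorem~\ref{T: constrained}, the relevant property of the intrinsic stratification also gives $\mf X-\mf X^{n-1}=U_{\mc E}-\Sigma$. I would then set $\mc P^*:=\mc P^*_{\mf X,\vec p,\mc E}$, the ts-Deligne sheaf associated to $\mf X$ (here $\vec p$ depends only on codimension, so it pushes forward to $\mf X$ without ambiguity by Remark~\ref{R: pushforward abuse}). Since a ts-Deligne sheaf restricts to $\mc E$ over the regular strata, $\mc P^*|_{\mf X-\mf X^{n-1}}\cong\mc E|_{\mf X-\mf X^{n-1}}$, i.e.\ $\mc P^*|_{U_{\mc E}-\Sigma}\cong\mc E|_{U_{\mc E}-\Sigma}$, which is the first assertion.

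Next I would show that $\mc P^*$ satisfies TAx2$(X',\vec p,\mc E,\Sigma)$ for every CS set stratification $X'$ with $(X')^{n-1}=\Sigma$ that is fully adapted to $\mc E$. For such an $X'$, the stratification $\mf X$ coarsens $X'$ and $\mf X^{n-1}=(X')^{n-1}=\Sigma$, so the second alternative of Proposition~\ref{P: constrained} (weakly constrained $\vec p$, common $(n-1)$-skeleton) yields a quasi-isomorphism $\mc P^*=\mc P^*_{\mf X,\vec p,\mc E}\cong\mc P^*_{X',\vec p,\mc E}$. By Theorem~\ref{T: ax equiv} (together with Theorem~4.8 of \cite{GBF32}), the sheaf $\mc P^*_{X',\vec p,\mc E}$ satisfies TAx1'$(X',\vec p,\mc E)$, and hence by the proposition preceding Definition~\ref{T: Ax2X'S} it satisfies TAx2$(X',\vec p,\mc E,\Sigma)$. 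All the conditions appearing in TAx2$(X',\vec p,\mc E,\Sigma)$ are invariant under quasi-isomorphism, and $X'$-local constancy is inherited by $\mc P^*$ because $\mc P^*$ is $\mf X$-clc (by \cite[Theorem 4.10]{GBF32} and constructibility) and $X'$ refines $\mf X$. Therefore $\mc P^*$ itself satisfies TAx2$(X',\vec p,\mc E,\Sigma)$.

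Finally, applying the previous paragraph with $X'=\mf X$ shows $\mc P^*$ satisfies TAx2$(\mf X,\vec p,\mc E,\Sigma)$. Since $\mf X$ is a CS set stratification of $|X|$ adapted to $\mc E$ with $\mf X^{n-1}=\Sigma$, this verifies the second axiom of TAx2'$(\vec p,\mc E,\Sigma)$ with witness stratification $\mf X$; the boundedness and support/cosupport conditions of TAx2'$(\vec p,\mc E,\Sigma)$ are literally those of TAx2$(\mf X,\vec p,\mc E,\Sigma)$. Hence $\mc P^*$ satisfies TAx2'$(\vec p,\mc E,\Sigma)$, and the proof is complete.

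The main obstacle is located entirely in Section~\ref{S: intrinsic}: the argument hinges on Proposition~\ref{P: intrinsic} providing a relative intrinsic stratification $\mf X_{\mc E,\Sigma}$ that simultaneously (i) coarsens all fully-adapted CS set stratifications with $(n-1)$-skeleton $\Sigma$, (ii) is itself fully adapted to $\mc E$, and (iii) has its own $(n-1)$-skeleton equal to $\Sigma$. Granting that, the remaining steps are a routine assembly of Proposition~\ref{P: constrained}, Theorem~\ref{T: ax equiv}, and the TAx1'$\Leftrightarrow$TAx2 equivalence; in fact this weakly constrained case is simpler than Theorem~\ref{T: constrained2}, since no delicate case analysis near degrees $0$ and $n$ is required.
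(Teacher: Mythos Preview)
Your proposal is correct and follows essentially the same approach the paper intends: the paper does not write out a separate proof of Theorem~\ref{T: constrainedS} but declares it analogous to Theorem~\ref{T: constrained2}, and your argument is precisely that adaptation, using the relative intrinsic stratification $\mf X_{\mc E,\Sigma}$ from Proposition~\ref{P: intrinsic} in place of $\mf X_{\mc E}$, invoking the second alternative of Proposition~\ref{P: constrained}, and appealing to the TAx1'$\Leftrightarrow$TAx2$(X,\vec p,\mc E,\Sigma)$ equivalence stated just before Definition~\ref{T: Ax2X'S}. Your identification $\mf X^{n-1}=\Sigma$ via Property~\ref{I: int3} matches exactly the computation carried out in the proof of Theorem~\ref{T: constrained}, case~2.
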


\section{Intrinsic stratifications}\label{S: intrinsic}

In this section we consider common coarsenings of CS sets. In particular, each CS set possesses an intrinsic stratification as a CS set that coarsens all others; this is due to King and Sullivan \cite{Ki} and a thorough discussion can be found in \cite[Section 2.10]{GBF35}. This generalizes the classical situation for PL spaces, which is treated in \cite[Section 2.10.1]{GBF35}. 
Since not all stratifications are adapted to a given coefficient system, it will be necessary to refine the construction of a coarsest stratification to take the coefficients into account.\footnote{A version of such a construction for fairly general sheaf complexes can be found in Habegger-Saper \cite[Section 3]{HS91}.} Furthermore, we have seen in Section \ref{S: inv} that we may wish to only consider coarsenings that preserve some subspace, without letting strata in the subspace  ``merge'' with other strata not in the subspace.  
So this is a further ingredient we will consider for our common coarsenings. 

Many of the basic ideas are the same as in the above references, but in order to account for the additional ingredients we provide most of the details. No doubt some of our arguments would simplify if we were to consider only piecewise linear spaces, but there would still be intricacies in accounting for the coefficient systems and the fixed subspaces. Consequently, we choose to make the minor additional effort to work with CS sets in the topological category and so capture the greater generality.  
For example, working with not-necessarily-PL spaces allows us to apply these results when $\mc E$ is a local system defined only on the complement of a codimension-two locally flat submanifold of a topological manifold. Such a setting occurs, for example, in the study of topological locally flat knots $S^{n-2} \into S^2$. Another minor feature of working with CS sets is that all of our constructions in this section will be essentially ``by hand,'' without the need to invoke any major theorems of PL topology.

We recall that the usual intrinsic stratification of a CS set $X$ is determined  by an equivalence relation so that $x_0, x_1\in X$ are equivalent, denoted $x_0\sim x_1$,  if they possess neighborhoods $U_0,U_1$ such that $(U_0,x_0)\cong (U_1,x_1)$ as topological space pairs (i.e.\ ignoring the filtrations) \cite[Definition 2.10.3]{GBF35}. If $x_0,x_1$ are both in the same  stratum of $X$, then $x_0\sim x_1$ \cite[Lemma 2.10.4]{GBF35}. 
Furthermore, if we  let ${\mf X}^i$ be the union of the equivalence classes that only contain strata of $X$ of dimension $\leq i$, then the ${\mf X}^i$ filter $X$ as a CS set that does not depend on the initial filtration of $X$ as a CS set and that coarsens all other CS set stratifications \cite[Proposition 2.10.5]{GBF35}. This provides an intrinsic coarsest CS set stratification of $X$. 

To account for subspaces, the Frontier Condition \cite[Definition 2.2.16., Lemma 2.3.7]{GBF35} implies that if we have a stratum $S$ that we don't wish to merge with some other stratum $T$ of lower codimension then points in the closure of $S$ also cannot merge with $T$.  Consequently, it makes sense for our fixed subspaces to be closed unions of strata. We therefore make the following definition. The assumption that $X$ be fully adapted to a maximal ts-coefficient system $\mc E$ will be critical in the following arguments; see Section \ref{S: maximal E} for those definitions.

\begin{definition}
Let $X$ be a CS set fully adapted to the maximal ts-coefficient system $\mc E$ with domain the open $n$-manifold $U_{\mc E}$, and let C be a closed union of strata of $X$. We say $x\sim_{\mc E,C} y$ for points $x,y\in X$ if  there is a homeomorphism of space pairs (ignoring the stratifications) $h:(U_x,x)\to (U_y ,y)$ so that
\begin{enumerate}

\item $h(U_x\cap C)=U_y\cap C$,

\item $h(U_x\cap U_{\mc E})=U_y\cap U_{\mc E}$, and

\item $h^*(\mc E|_{U_y\cap U_{\mc E}})$ is quasi-isomorphic to $\mc E|_{U_x\cap U_{\mc E}}$, i.e.\ $h^*(\mc E|_{U_y\cap U_{\mc E}}) \cong \mc E|_{U_x\cap U_{\mc E}}$ in the derived category.

\end{enumerate}

\end{definition}

For the rest of our discussion we fix $\mc E$ and $C$ and so simply write $\sim $ for our relation.

\begin{lemma}\label{L: simE}\hfill
\begin{enumerate}

\item $\sim $ is an equivalence relation.

\item If $x,y\in X$ are in the same stratum of $X$ then $x\sim  y$.
\end{enumerate}
\end{lemma}
\begin{proof}
The relation is clearly reflexive. For symmetry and transitivity, the only parts that are not obvious are the behavior of $\mc E$. Suppose $x\sim y$ and $y\sim z$ with homeomorphisms $h:U_x\to U_y$ and $g:U_y\to U_z$. Then  $h^*\mc E|_{U_y\cap U_{\mc E}}\cong\mc E|_{U_x\cap U_{\mc E}}$ and  $g^*\mc E|_{U_z\cap U_{\mc E}}\cong\mc E|_{U_y\cap U_{\mc E}}$, so $(gh)^*\mc E|_{U_z\cap U_{\mc E}}\cong h^*g^*\mc E|_{U_z\cap U_{\mc E}}\cong h^*\mc E|_{U_y\cap U_{\mc E}}\cong \mc E|_{U_x\cap U_{\mc E}}$ demonstrating transitivity. Similarly, if $x\sim  y$ then $(h^{-1})^*\mc E|_{U_x\cap U_{\mc E}}\cong (h^{-1})^*h^*\mc E|_{U_y\cap U_{\mc E}}\cong (hh^{-1})^*\mc E|_{U_y\cap U_{\mc E}}\cong \mc E|_{U_y\cap U_{\mc E}}$. So $\sim $ is an equivalence relation.

Now suppose $U_x$ is a distinguished neighborhood of $x$ by the filtered homeomorphism $g:\R^k\times cL\to U_x$. Let $y\in U_x$ be 
 contained in the same stratum as $x$, in which case $U_x$ is also a distinguished neighborhood of $y$. Then $g^{-1}(x),g^{-1}(y)\subset \R^k\times \{v\}=\R^k$, where $v$ is the cone point. Let $f$ be a homeomorphism of $\R^k$ that takes $g^{-1}(x)$ to $g^{-1}(y)$, and let $h=f\times \id$ on $\R^k\times cL$. Then $ghg^{-1}$ is a homeomorphism $U_x\to U_y=U_x$ that takes $x$ to $y$. Since $ghg^{-1}$ preserves strata and since $X$ is fully adapted to $\mc E$ and $C$ is a union of strata, the map  $ghg^{-1}$ restricts to a homeomorphism from $U_x\cap U_{\mc E}$ to itself and also from $U_x\cap C$ to itself. Furthermore, since $\mc E$ is clc on its domain of definition and since $X$ is fully adapted to $\mc E$, $g^*\mc E$ will be clc on $g^{-1}(U_x\cap U_{\mc E})$, which will be a set of the form $\R^k\times V$. Let $\pi:\R^k\times cL\to cL$ be the projection. By \cite[Proposition 2.7.8]{KS}, $g^*\mc E\cong \pi^*R\pi_*g^*\mc E$ on its domain. So since $\pi h=\pi$ we have
$$(ghg^{-1})^*\mc E\cong (g^{-1})^*h^*g^*\mc E\cong (g^{-1})^*h^* \pi^*R\pi_*g^*\mc E \cong (g^{-1})^* \pi^*R\pi_*g^*\mc E\cong  (g^{-1})^*g^*\mc E\cong  \mc E$$ on its domain.
So $x\sim y$. 

Now suppose $x$ is in the stratum $S$ of $x$ and let $W$ be the set of points in $S$ equivalent to $x$. By the above argument, $W$ is an open subset of $S$.  On the other hand, if $y\in S$ is in the closure of $W$, then any distinguished neighborhood of $y$ must intersect $W$, so $y\in W$ by the above. Thus $W$ is closed. Since strata are connected, $W$ must be all of $S$.
\end{proof}

By the lemma, the equivalence classes under $\sim $ are unions of strata of $X$. Let $\mf X^i_{\mc E,C}$ be the union of the equivalence classes made up only of strata of dimension $\leq i$, and let $\mf X_{\mc E,C}$ be the stratification of $|X|$ with these skeleta.

\begin{definition}
We call $\mf X_{\mc E,C}$ the \emph{intrinsic stratification of $X$ rel $(\mc E,C)$}.
\end{definition}

The following proposition contains the properties of  $\mf X_{\mc E,C}$, including that this is a CS set and that it provides a common coarsening of all CS set stratifications of $|X|$ that are fully adapted to $\mc E$ and for which $C$ is a closed union of strata (Property \ref{I: int6}). We will only need the case $k=1$ of the last statement, Property \ref{I: int2}, which concerns the closure of the union of strata of codimension one. However,  the proof is equivalent for any $k$ so we provide the more general version.

\begin{proposition}\label{P: intrinsic}
Let $X$ be an $n$-dimensional CS set fully adapted to the maximal ts-coefficient system $\mc E$ with domain the open $n$-manifold $U_{\mc E}$, and let $C$ be a closed union of strata of $X$ of codimension $\geq 1$. Then:
\begin{enumerate}

\item\label{I: int1} The sets $\mf X^i_{\mc E, C}$ filter $|X|$ as a CS set.

\item\label{I: int2a} If $x$ and $y$ are in the same stratum of $\mf X_{\mc E,C}$ then $x\sim y$.

\item\label{I: int2b} $C$ is a union of strata of $\mf X_{\mc E,C}$.

\item\label{I: int3} $\mf X-\mf X^{n-1}=U_{\mc E}-C$.

\item\label{I: int4} $\mf X_{\mc E, C}$ is fully adapted to $\mc E$.

\item\label{I: int5} Suppose  $\mc Y$ is another CS set stratification of $|X|$ that is fully adapted to $\mc E$  and such that $C$ is also a closed union of strata of $\mc Y$. Then starting with $\mc Y$ results in the same $\mf X^i_{\mc E, C}$, i.e.\ the intrinsic stratification of $\mc Y$ rel $(\mc E,C)$ is also $\mf X_{\mc E, C}$.

\item\label{I: int6} Suppose  $\mc Y$ is another CS set stratification of $|X|$ that is fully adapted to $\mc E$  and such that $C$ is also a closed union of strata of $\mc Y$. Then $\mc Y$ refines $\mf X_{\mc E,C}$, i.e.\ each stratum of $\mf X_{\mc E,C}$ is a union of strata of $\mc Y$. Hence, $\mf X_{\mc E,C}$ is a common coarsening of all such stratifications.

\item\label{I: int2} Suppose  $C_k$ is the closure of the union of strata of $X$ of codimension $k$. Then $C_k$ is also the closure of the union of strata of $\mf X_{\mc E,C_k}$ of codimension $k$.

\end{enumerate}
\end{proposition}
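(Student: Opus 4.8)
The plan is to show that the union $D := X_{n-k}$ of the codimension-$k$ strata of $X$ and the union $D' := \mf X_{n-k}$ of the codimension-$k$ strata of $\mf X := \mf X_{\mc E,C_k}$ have the same closure. Since $C_k = \overline D$ by definition, this amounts to the two inclusions $\overline{D'}\subseteq C_k$ and $C_k\subseteq\overline{D'}$. First I would record the standing observations that $C_k$ is a closed union of strata of $X$ by the Frontier Condition, that every stratum contained in $C_k$ has codimension $\geq k$ (as $C_k\subseteq X^{n-k}$), and that $C_k\setminus D\subseteq X^{n-k-1}$ (because $D$ is open in the closed set $X^{n-k}$); also, since $C_k$ is a closed union of strata of $X$ and $X$ is fully adapted to $\mc E$, Property \ref{I: int6} applied with $\mc X = X$ shows that $\mf X$ coarsens $X$, so every stratum of $\mf X$ is a union of strata of $X$.

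The technical fact I would isolate and use repeatedly is: a stratum $T$ of $\mf X$ with $\dim T = n-k+r$ contains a stratum of $X$ of that same dimension $n-k+r$. Indeed $T$ is a union of strata of $X$ each of dimension $\leq n-k+r$, and if they all had dimension $\leq n-k+r-1$ then $T$ would be a subspace of $X^{n-k+r-1}$; but $X^{n-k+r-1}$ has covering dimension $\leq n-k+r-1$ for a CS set, while the manifold $T$ has covering dimension $n-k+r$, and covering dimension is monotone for subspaces of metrizable spaces --- a contradiction. Granting this, $\overline{D'}\subseteq C_k$ follows quickly: for a codimension-$k$ stratum $T$ of $\mf X$ the fact produces a codimension-$k$ stratum $S\subseteq T$ of $X$, and $S\subseteq D\subseteq C_k$; since $C_k$ is a union of strata of $\mf X$ by Property \ref{I: int2b} and $S$ meets $T$, we get $T\subseteq C_k$, hence $D'\subseteq C_k$ and then $\overline{D'}\subseteq C_k$ as $C_k$ is closed.

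For $C_k\subseteq\overline{D'}$ it suffices to prove $D\subseteq D'$ and take closures, and this is the step where fixing $C=C_k$ in the equivalence relation does the real work. Given a codimension-$k$ stratum $S$ of $X$ and $x\in S$, let $T$ be the stratum of $\mf X$ through $x$; I claim $\dim T=n-k$. If instead $\dim T=n-k+r$ with $r\geq 1$, the fact above gives a stratum $S'\subseteq T$ of $X$ with $\dim S'=n-k+r$, so each $y\in S'$ satisfies $y\sim_{\mc E,C_k}x$. Since $x\in S\subseteq D\subseteq C_k$, the defining homeomorphism $h$ with $h(x)=y$ and $h(U_x\cap C_k)=U_y\cap C_k$ forces $y\in C_k$; thus $S'$ meets $C_k$, and because $C_k$ is a union of strata of $X$, $S'\subseteq C_k$. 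But $S'$ has codimension $<k$, so $S'\cap D=\emptyset$ and therefore $S'\subseteq C_k\setminus D\subseteq X^{n-k-1}$, contradicting that $S'$ is a manifold of dimension $\geq n-k+1$ while $X^{n-k-1}$ has covering dimension $\leq n-k-1$. Hence $\dim T=n-k$, so $x\in T\subseteq D'$; this gives $D\subseteq D'$, and $C_k=\overline D\subseteq\overline{D'}$. Combining the two inclusions completes the proof.

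The hard part, I expect, is not the bookkeeping but making precise the dimension-theoretic input $\dim X^{m}\leq m$ for the skeleta of a CS set and the monotonicity of covering dimension for subspaces of metrizable spaces; once those are cited (the former from \cite{GBF35}), everything else reduces to the Frontier Condition together with Properties \ref{I: int2b} and \ref{I: int6}, which are already in hand. The conceptual content is exactly that fixing $C=C_k$ in the equivalence relation prevents the codimension-$k$ strata of $X$ from being absorbed into, or absorbing, strata of smaller codimension.
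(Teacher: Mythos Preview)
Your argument is correct. Both you and the paper establish the same key containments $X_{n-k}\subseteq\mf X_{n-k}$ and $\mf X_{n-k}\subseteq C_k$, but the routes differ.

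The paper works directly from the \emph{definition} of the skeleta: $\mf X^i$ is the union of equivalence classes consisting only of $X$-strata of dimension $\leq i$. So for $x\in X_{n-k}\subseteq C_k$, any $y$ in a stratum of dimension $>n-k$ has a neighborhood missing the closed set $C_k$, hence $x\not\sim y$; together with $x\sim x$ this immediately places $x\in\mf X^{n-k}\setminus\mf X^{n-k-1}$. Conversely, $x\in\mf X_{n-k}$ means $x$ is equivalent to some point in an $(n-k)$-dimensional $X$-stratum, and that point lies in $C_k$, so $x\in C_k$ by the $C$-preserving clause of the equivalence. No dimension theory is invoked.

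Your approach treats Properties~\ref{I: int2a}, \ref{I: int2b}, \ref{I: int6} as black boxes and substitutes a covering-dimension lemma (that an $\mf X$-stratum of dimension $j$ must contain an $X$-stratum of the same dimension) for the direct appeal to how $\mf X^i$ is built from equivalence classes. This works, and is a reasonable way to argue once the earlier properties are in hand, but it imports dimension-theoretic input (monotonicity of covering dimension, $\dim X^m\leq m$) that the paper avoids entirely. One small omission: you only argue against $\dim T>n-k$; the case $\dim T<n-k$ is handled implicitly because the $(n-k)$-manifold $S$ sits inside $T$, so monotonicity again gives $\dim T\geq n-k$. It would be worth saying this explicitly.
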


\begin{proof}
We write simply $\mf X$ rather than $\mf X_{\mc E,C}$. We take each statement in turn.

\paragraph{\ref{I: int1}.}
The proof that the sets $\mf X^i$ filter $|X|$ as a CS set is essentially identically to the classical case \cite[Proposition 2.10.5]{GBF35}, using Lemma \ref{L: simE}. We sketch the argument as we will use some of the details below. 

First observe that if $x\sim y$ by the homeomorphism $h:U_x\to U_y$ and if $z\in U_x$ then $z\sim h(z)$ letting $U_z=U_x$ and $U_{h(z)}=U_y$. Now suppose $x\in |X|-\mf X^i$ so that  $x$ is equivalent to a point $y$ in a stratum of $X$ of dimension $>i$. By restricting to a smaller $U_x$ if necessary, we can assume $h(U_x)$ is contained in a distinguished neighborhood of $y$ in $X$. Then $h(U_x)$ intersects only strata of dimension $>i$ and so each point of $U_x$ is equivalent to a point in a stratum of dimension $>i$. So $U_x\in |X|-\mf X^i$. Thus  $|X|-\mf X^i$ is open so $\mf X^i$ is closed and the $\mf X^i$ provide a closed filtration of  $X$, as clearly $\mf X^i\subset \mf X^{i+1}$. 

Next suppose $x\in X_i\cap \mf X_i$. Then $x$ has a distinguished neighborhood $N\cong \R^i\times cL$ in $X$. It is shown in the proof of \cite[Proposition 2.10.5]{GBF35} that if we think of $L$ as embedded as the image of $\{0\}\times \{1/2\}\times L$ and refilter $|L|$ by $\ell^{j-i-1}=|L|\cap \mf X^j$ then the image of $\R^i\times c\ell$ becomes a distinguished neighborhood of $x$ in $\mf X$. If $z\in \mf X_i-\mf X_i\cap X_i$ then $z$ is equivalent to some point $x\in X_i\cap \mf X_i$, and we obtain a distinguished neighborhood for $z$ in $\mf X$ as the filtered homeomorphic image of a distinguished neighborhood of $x$ in $\mf X$ under the homeomorphism of the equivalence. See \cite{GBF35} for details.

\paragraph{\ref{I: int2a}.} It follows from Lemma \ref{L: simE} and the construction of distinguished neighborhoods in the proof of Property \ref{I: int1} that each point $x$ has a distinguished neighborhood $\R^i\times cL$ in $\mf X$ such that the points of $\R^i\times \{v\}$, i.e.\ all the points in the neighborhood that are in the same stratum of $\mf X$ as $x$, are equivalent. Property \ref{I: int2a} now follows from the same sort of open/closed argument as in the proof of Lemma \ref{L: simE}.

\paragraph{\ref{I: int2b}.} By the preceding property, all points in any fixed stratum of $\mf X$ are equivalent. From the definition of the equivalence relation, this would not be possible if any stratum of $\mf X$ intersected both $C$ and its complement. Hence any stratum intersecting $C$ is contained in $C$, and $C$ is a union of strata. 

\paragraph{\ref{I: int3}.}
Suppose $x\in \mf X^n-\mf X^{n-1}$. Then by definition $x$ is equivalent to a point $z$ in $X^n-X^{n-1}$. But since $X$ is adapted to $\mc E$, the point $z$ has a Euclidean neighborhood on which $\mc E$ is defined. Hence so does $x$. Thus $\mf X-\mf X^{n-1}\subset U_{\mc E}$. Furthermore, by construction of $\sim$ and the assumption that $C$ is the closure of a union of strata of codimension $\geq 1$,  no point in $C$ can be equivalent to a point in $X-X^{n-1}$, so  $\mf X-\mf X^{n-1}\subset U_{\mc E}- C$. 

Next suppose $x\in U_{\mc E}-C$. Then $x$ has a Euclidean neighborhood in $|X|-C$ on which $\mc E$ is defined. 
 By the argument in the proof of Lemma \ref{L: simE}, $x$ will be equivalent to every other point in this neighborhood. In particular $x$ is equivalent to a point in an $n$-dimensional stratum of $X$ and so $x\in \mf X^n-\mf X^{n-1}$.

\paragraph{\ref{I: int4}.} We have already seen that $\mf X-\mf X^{n-1}=U_{\mc E} - C$, so in particular  $\mf X-\mf X^{n-1}\subset U_{\mc E} $.
It remains to show that $U_{\mc E}\cap C$ is a union of strata of $\mf X$. Let $\mf S\subset \mf X_i$, $i\leq n-1$, be any stratum of $\mf X$. Then $U_{\mc E}\cap \mf S$ is open in $\mf S$ since $U_{\mc E}$ is an open set. Thus it suffices to show that $U_{\mc E}\cap \mf S$ is also closed in $\mf S$. Let $x$ be in the closure of $U_{\mc E}\cap \mf S$ so that every neighborhood of $x$ intersects $U_{\mc E}\cap \mf S$. By the proof of Property \ref{I: int1}, $x$ is equivalent to a point  $z\in X_i$ and the homeomorphism $h: U_z\to U_x$ induces (possibly after restriction to a subspace) a filtered homeomorphism from a distinguished neighborhood of $z$ in $\mf X$ to a distinguished neighborhood of $x$ in $\mf X$. In particular, $h$ takes a neighborhood $B$ of $z$ in $X_i$ to a neighborhood of $x$ in $\mf X_i$. Since $X$ is fully adapted to $\mc E$, either $B\subset U_{\mc E}$ or $B\cap U_{\mc E}=\emptyset$. But since $h(B)$ is a neighborhood of $x$ in $\mf S$, there is some $y\in h(B)\cap U_{\mc E}$, and hence $h^{-1}(y)\in B\cap U_{\mc E}$ by definition of $\sim$. Thus $z\in U_{\mc E}$ and so is $x$.

\paragraph{\ref{I: int5}.} Here we modify the proof of \cite[Proposition 2.10.5]{GBF35}.  
 Let $X$ and $\mc Y$ be two CS set stratifications of $|X|$ fully adapted to $\mc E$ and such that $C$ is a closed union of strata of $\mc Y$. Let $\mf X$ and $\mf Y$ be the resulting coarsenings. The equivalence relation $\sim$ does not depend on the stratifications, and so the equivalence relations used to define $\mf X$ and $\mf Y$ are the same and we will use the same symbol for both. However,  the definitions of the skeleta $\mf X^i$ and $\mf Y^i$ do \emph{a priori} depend on the stratifications, so this is what we must consider.

Clearly $\mf X^n=|X|=\mf Y^n$, and by our preceding arguments $\mf X -\mf X^{n-1}=U_{\mc E}-C=\mf Y-\mf Y^{n-1}$ so that $\mf X^{n-1}=\mf Y^{n-1}$. 
Now let $x\in \mf X^i$ for some $i< n-1$. Then $x$ cannot be equivalent to any point in $X_j$ with $j>i$ by definition. Suppose $x\in \mf Y_j$ for some $j>i$, and let $\mf S$ be the stratum of $\mf Y_j$ containing $x$. By  Property \ref{I: int2a}, the points of $\mf S$ are all equivalent to $x$. But now dimension considerations show that there must be points arbitrarily close to $x$ that are equivalent to $x$ but not contained in $X^i$, and in particular there is therefore some stratum of $X$ of dimension $>i$ containing points equivalent to $x$, a contradiction. So $x$ is not in any $\mf Y_j$ with $j>i$ and so $x\in \mf Y^i$. Thus $\mf X^i\subset \mf Y^i$, and the same argument shows the converse. So $\mf X^i= \mf Y^i$ for all $i$.

\paragraph{\ref{I: int6}.}
The statement follows from the preceding one and Lemma \ref{L: simE}.

\paragraph{\ref{I: int2}.}
Let $\mf C_k$ denote the closure of the strata of $\mf X$ of codimension $k$. By the Frontier Condition, $C_k\subset X^{n-k}$. So if $x\in X_{n-k}$ then $x$ cannot be equivalent to any point in $X-X^{n-k}$ as such points have neighborhoods that do not intersect $C_k$. So $x\in \mf X^{n-k}$. But also $x$ clearly cannot be equivalent to a point that is only equivalent to points in strata of $X$ of dimension $<n-k$, so $x\in \mf X^{n-k}-\mf X^{n-k-1}$.  Thus $X^{n-k}-X^{n-k-1}\subset \mf  X^{n-k}-\mf X^{n-k-1}$. Taking closures, $C_k\subset \mf C_k$. 

Next, suppose $x\in \mf X^{n-k}-\mf X^{n-k-1}$. 
By definition $x$ is equivalent to a point in a stratum of $X$ of dimension $n-k$. If $x\notin C_k$ then $x$ has a neighborhood that does not intersect $C_k$ and so $x$ is not equivalent to a point in $C_k$, a contradiction. So $x\in C_k$. Thus $ \mf X^{n-k}-\mf X^{n-k-1}\subset C_k$, so, taking closures, $\mf C_k\subset C_k$. 
\end{proof}

\providecommand{\bysame}{\leavevmode\hbox to3em{\hrulefill}\thinspace}
\providecommand{\MR}{\relax\ifhmode\unskip\space\fi MR }
\providecommand{\MRhref}[2]{%
  \href{http://www.ams.org/mathscinet-getitem?mr=#1}{#2}
}
\providecommand{\href}[2]{#2}

Some diagrams in this paper were typeset using the \TeX\, commutative
diagrams package by Paul Taylor.

Data availability statement:
Data sharing not applicable to this article as no datasets were generated or analyzed during the current study.


\begin{thebibliography}{10}

\bibitem{Ak69}
Ethan Akin, \emph{Manifold phenomena in the theory of polyhedra}, Trans. Amer.
  Math. Soc. \textbf{143} (1969), 413--473.

\bibitem{BBD}
A.~A. Be{\u\i}linson, J.~Bernstein, and P.~Deligne, \emph{Faisceaux pervers},
  Analysis and topology on singular spaces, {I} ({L}uminy, 1981), Ast\'erisque,
  vol. 100, Soc. Math. France, Paris, 1982, pp.~5--171.

\bibitem{Bo}
A.~Borel et~al., \emph{Intersection cohomology}, Progress in Mathematics,
  vol.~50, Birkh\"auser Boston, Inc., Boston, MA, 1984, Notes on the seminar
  held at the University of Bern, Bern, 1983, Swiss Seminars.

\bibitem{Br}
Glen~E. Bredon, \emph{Sheaf theory}, second ed., Graduate Texts in Mathematics,
  vol. 170, Springer-Verlag, New York, 1997.

\bibitem{CS91}
Sylvain~E. Cappell and Julius~L. Shaneson, \emph{Singular spaces,
  characteristic classes, and intersection homology}, Ann. of Math. (2)
  \textbf{134} (1991), no.~2, 325--374.

\bibitem{CST-inv}
David Chataur, Martintxo Saralegi-Aranguren, and Daniel Tanr\'{e},
  \emph{Intersection homology: general perversities and topological
  invariance}, Illinois J. Math. \textbf{63} (2019), no.~1, 127--163.

\bibitem{GBF11}
Greg Friedman, \emph{Superperverse intersection cohomology: stratification
  (in)dependence}, Math. Z. \textbf{252} (2006), no.~1, 49--70.

\bibitem{GBF10}
\bysame, \emph{Singular chain intersection homology for traditional and
  super-perversities}, Trans. Amer. Math. Soc. \textbf{359} (2007), no.~5,
  1977--2019.

\bibitem{GBF23}
\bysame, \emph{Intersection homology with general perversities}, Geom. Dedicata
  \textbf{148} (2010), 103--135.

\bibitem{GBF26}
\bysame, \emph{An introduction to intersection homology with general perversity
  functions}, Topology of stratified spaces, Math. Sci. Res. Inst. Publ.,
  vol.~58, Cambridge Univ. Press, Cambridge, 2011, pp.~177--222.

\bibitem{GBF32}
\bysame, \emph{Generalizations of intersection homology and perverse sheaves
  with duality over the integers}, Michigan Math. J. \textbf{68} (2019), no.~4,
  675--726.

\bibitem{GBF35}
\bysame, \emph{Singular intersection homology}, New Mathematical Monographs,
  vol.~33, Cambridge University Press, 2020.

\bibitem{GBF44}
\bysame, \emph{Two short proofs of the topological invariance of intersection
  homology}, J. Singul. \textbf{25} (2022), 144--149.

\bibitem{GM1}
Mark Goresky and Robert MacPherson, \emph{Intersection homology theory},
  Topology \textbf{19} (1980), no.~2, 135--162.

\bibitem{GM2}
\bysame, \emph{Intersection homology. {II}}, Invent. Math. \textbf{72} (1983),
  no.~1, 77--129.

\bibitem{GS83}
Mark Goresky and Paul Siegel, \emph{Linking pairings on singular spaces},
  Comment. Math. Helv. \textbf{58} (1983), no.~1, 96--110.

\bibitem{HS91}
Nathan Habegger and Leslie Saper, \emph{Intersection cohomology of cs-spaces
  and {Z}eeman's filtration}, Invent. Math. \textbf{105} (1991), no.~2,
  247--272.

\bibitem{KS}
Masaki Kashiwara and Pierre Schapira, \emph{Sheaves on manifolds}, Grundlehren
  der Mathematischen Wissenschaften [Fundamental Principles of Mathematical
  Sciences], vol. 292, Springer-Verlag, Berlin, 1994.

\bibitem{Ki}
Henry~C. King, \emph{Topological invariance of intersection homology without
  sheaves}, Topology Appl. \textbf{20} (1985), no.~2, 149--160.

\bibitem{RIB}
Paulo Ribenboim, \emph{Rings and modules}, Interscience Tracts in Pure and
  Applied Mathematics, No. 24, Interscience Publishers John Wiley \& Sons,
  Inc., New York-London-Sydney, 1969.

\bibitem{Sa05}
Martintxo Saralegi-Aranguren, \emph{de {R}ham intersection cohomology for
  general perversities}, Illinois J. Math. \textbf{49} (2005), no.~3, 737--758
  (electronic).

\bibitem{Sch03}
J\"org Sch\"urmann, \emph{Topology of singular spaces and constructible
  sheaves}, Instytut Matematyczny Polskiej Akademii Nauk. Monografie
  Matematyczne (New Series) [Mathematics Institute of the Polish Academy of
  Sciences. Mathematical Monographs (New Series)], vol.~63, Birkh\"auser
  Verlag, Basel, 2003.

\bibitem{Si72}
L.~C. Siebenmann, \emph{Deformation of homeomorphisms on stratified sets. {I},
  {II}}, Comment. Math. Helv. \textbf{47} (1972), 123--136; ibid. 47 (1972),
  137--163.

\bibitem{Va14}
Guillaume Valette, \emph{A {L}efschetz duality for intersection homology},
  Geom. Dedicata \textbf{169} (2014), 283--299.

\end{thebibliography}
\end{document}